\newcommand*\bb{\mathbb}
\newcommand *\w{^\wedge}
\newcommand{\pa}{\partial}
\newcommand{\vo}{\vec{o}\@ifnextchar{^}{\,}{}}
\def\YYint#1#2#3{{\setbox0=\hbox{$#1{#2#3}{\iint}$}
    \vcenter{\hbox{$#2#3$}}\kern-.50\wd0}}
\def\XXint#1#2#3{{\setbox0=\hbox{$#1{#2#3}{\int}$}
    \vcenter{\hbox{$#2#3$}}\kern-.50\wd0}}
\def\namedlabel#1#2{\begingroup
   \def\@currentlabel{#2}%
   \label{#1}\endgroup
}
\newcommand{\rmh}[1]{\mathpalette{\raisem@th{#1}}}
\newcommand{\raisem@th}[3]{\hspace*{-1pt}\raisebox{#1}{$#2#3$}}
\newcommand{\redref}[2]{\texorpdfstring{\protect\hyperlink{#1}{\textcolor{black}{(}\textcolor{red}{#2}\textcolor{black}{)}}}{}}
\newcommand{\redlabel}[2]{\hypertarget{#1}{\textcolor{black}{(}\textcolor{red}{#2}\textcolor{black}{)}}}
\newcommand{\descitem}[2]{\item[{#1:}]\label{#2}}
\newcommand{\descref}[2]{\hyperref[#1]{\textnormal{\textcolor{black}{(}\textcolor{blue}{\bf #2}\textcolor{black}{)}}}}
\newcommand{\dref}[2]{\hyperref[#1]{\textcolor{black}{(}\textcolor{blue}{\bf #2}\textcolor{black}{)}}}
\newcommand\RR{\mathbb{R}}
\newcommand\NN{\mathbb{N}}
\newcommand{\al}{\alpha}
\newcommand{\de}{\delta}
\newcommand{\ve}{\varepsilon}
\newcommand{\tht}{\theta}
\newcommand{\Om}{\Omega}
\newcommand{\La}{\Lambda}
\DeclareMathOperator{\spt}{spt}
\DeclareMathOperator*{\esssup}{ess\,sup}
\DeclareMathOperator*{\essinf}{ess\,inf}
\DeclareMathOperator*{\essosc}{ess\,osc}
\DeclareMathOperator{\tll}{Tail}
\DeclareMathOperator*{\tail}{\tll_\infty}
\newcommand{\abs}[1]{\left| #1\right|}
\newcommand{\lbr}[1][(]{\left#1}
\newcommand{\rbr}[1][)]{\right#1}
\g@addto@macro\normalsize{%
  \setlength\abovedisplayskip{2pt}
  \setlength\belowdisplayskip{2pt}
  \setlength\abovedisplayshortskip{4pt}
  \setlength\belowdisplayshortskip{4pt}
}
\numberwithin{equation}{section}
\crefname{section}{Section}{Sections}
\crefname{subsection}{Subsection}{Subsections}
\crefname{condition}{Condition}{Conditions}
\crefname{hypothesis}{Hypothesis}{Hypothesis}
\crefname{assumption}{Assumption}{Assumptions}
\crefname{lemma}{Lemma}{Lemmas}
\crefname{claim}{Claim}{Claims}
\crefname{remark}{Remark}{Remarks}
\newtheorem{theorem}{Theorem}[section]
\newtheorem{lemma}[theorem]{Lemma}
\newtheorem{claim}[theorem]{Claim}
\newtheorem{proposition}[theorem]{Proposition}
\newtheorem{definition}[theorem]{Definition}
\newtheorem{remark}[theorem]{Remark}        
\numberwithin{equation}{section}
\newlist{steps}{enumerate}{1}
\setlist[steps, 1]{label = \textcolor{Cerulean}{Step \arabic*:}}
\def\ps@pprintTitle{%
	\let\@oddhead\@empty
	\let\@evenhead\@empty
	\def\@oddfoot{}%
	\let\@evenfoot\@oddfoot}
\DeclarePairedDelimiterX{\inp}[2]{\langle}{\rangle}{#1, #2}
\newcommand{\norm}[1]{\left\lVert#1\right\rVert}
\begin{document}

\begin{frontmatter}
\title{Local H\"older regularity for nonlocal parabolic $p$-Laplace equations}
\author[myaddress1]{Karthik Adimurthi\tnoteref{thanksfirstauthor}}
\ead{karthikaditi@gmail.com and kadimurthi@tifrbng.res.in}
\author[myaddress1]{Harsh Prasad\tnoteref{thankssecondauthor}}
\ead{harsh@tifrbng.res.in}
\author[myaddress2]{Vivek Tewary\tnoteref{thankssecondauthor}}
\ead{vivek.tewary@krea.edu.in}

\tnotetext[thanksfirstauthor]{Supported by the Department of Atomic Energy,  Government of India, under	project no.  12-R\&D-TFR-5.01-0520 and SERB grant SRG/2020/000081}

\tnotetext[thankssecondauthor]{Supported by the Department of Atomic Energy,  Government of India, under	project no.  12-R\&D-TFR-5.01-0520}

\address[myaddress1]{Tata Institute of Fundamental Research, Centre for Applicable Mathematics, Bangalore, Karnataka, 560065, India}
\address[myaddress2]{School of Interwoven Arts and Sciences, Krea University, Sri City, Andhra Pradesh, 517646, India}
\begin{abstract}
We prove local H\"older regularity for nonlocal parabolic equations of the form
\begin{align*}
	\partial_t u + \text{P.V.}\int_{\RR^N} \frac{|u(x,t)-u(y,t)|^{p-2}(u(x,t)-u(y,t))}{|x-y|^{N+ps}}\,dy=0,
\end{align*} for $p\in (1,\infty)$ and $s \in (0,1)$. Our contribution extends the fundamental work of Caffarelli, Chan and Vasseur on linear nonlocal parabolic equations to  equations modeled on the fractional $p$-Laplacian.
 
\end{abstract}
    \begin{keyword} Nonlocal operators; Weak Solutions; H\"older regularity
    \MSC[2010] 35K92, 35B65, 35K51, 35A01, 35A15, 35R11.
    \end{keyword}

\end{frontmatter}
\tableofcontents

\section{Introduction}\label{sec1}

In this article, we prove local H\"older regularity for a nonlocal parabolic equation whose prototype equation is the parabolic fractional $p$-Laplacian of the form
\begin{align*}
    \partial_t u + \text{P.V.}\int_{\RR^N} \frac{|u(x,t)-u(y,t)|^{p-2}(u(x,t)-u(y,t))}{|x-y|^{N+ps}}\,dy=0,
\end{align*} with $p\in (1,\infty)$ and $s \in (0,1)$. The main tools in our investigation are
\begin{itemize} 
\item the exponential change of variable in time discovered by E.DiBenedetto, U.Gianazza and V.Vespri, which they used to prove Harnack's inequality for the parabolic $p$-Laplace equations. The standard references for these works are \cite{dibenedettoHarnackEstimatesQuasilinear2008,dibenedettoHarnackInequalityDegenerate2012}. The exponential change of variable in time leads to an expansion of positivity result which is suitable for proving H\"older regularity results. N.Liao in \cite{liaoUnifiedApproachHolder2020} has effected a direct proof of H\"older regularity for parabolic $p$-Laplace equation by using the expansion of positivity. It has also been used to give the first proofs of H\"older regularity for sign-changing solutions of doubly nonlinear equations of porous medium type in \cite{bogeleinHolderRegularitySigned2021,bogeleinHolderRegularitySigned2022,liaoHolderRegularitySigned2021}.  A recent work that employs the exponential change of variable to deduce regularity for elliptic anisotropic equations is \cite{liaoLocalRegularityAnisotropic2020}. 
\item the energy estimate for fractional parabolic $p$-Laplace equations containing the ``good term'' which substitutes the De Giorgi isoperimetric inequality in the so-called ``shrinking lemma''. The ``good term'' has been used to great effect in the works \cite{caffarelliDriftDiffusionEquations2010,caffarelliRegularityTheoryParabolic2011}. It has been used to define a novel De Giorgi class by Cozzi in \cite{cozziRegularityResultsHarnack2017} to prove H\"older regularity for fractional $p$-minimizers. Earlier, the absence of De Giorgi isoperimetric inequality was tackled by Di Castro et. al. through a logarithmic estimate \cite{dicastroLocalBehaviorFractional2016}. See \cite{cozziFractionalGiorgiClasses2019,adimurthiOlderRegularityFractional2022} for discussions on fractional versions of De Giorgi isoperimetric inequalities.
\end{itemize}

We demonstrate two purely nonlocal phenomenon - one at a technical level which has implications for what the corresponding De Giorgi conjectures related to the "best" Holder exponent in the nonlocal case should be (see \cref{rem:nonlocalA})  and another at the level of regularity  which says that in certain cases, time regularity can beat space regularity when we are working with nonlocal equations (see \cref{rem:nonlocalB}).

Previously, local boundedness for fractional parabolic $p$-Laplace equations is proved in the papers \cite{stromqvistLocalBoundednessSolutions2019,dingLocalBoundednessHolder2021,prasadLocalBoundednessVariational2021}. Regarding H\"older regularity for fractional parabolic $p$-Laplace equations in the spirit of De Giorgi-Nash-Moser, the only result that we are aware of is for the case $2 \leq p < \infty$ which was studied in \cite{dingLocalBoundednessHolder2021}, though we were unable to verify some of the calculations, particularly (5.23), pertaining to the logarithmic estimates in their paper. Explicit exponents for H\"older regularity for fractional parabolic $p$-Laplace equations with no coefficients appears in \cite{brascoContinuitySolutionsNonlinear2021}.

\subsection{A brief history of the problem} 

Much of the early work on regularity of fractional elliptic equations in the case $p=2$ was carried out by Silvestre \cite{silvestreHolderEstimatesSolutions2006}, Caffarelli and Vasseur \cite{caffarelliDriftDiffusionEquations2010}, Caffarelli, Chan, Vasseur \cite{caffarelliRegularityTheoryParabolic2011} and also Bass-Kassmann \cite{bassHarnackInequalitiesNonlocal2005,bassHolderContinuityHarmonic2005, kassmannPrioriEstimatesIntegrodifferential2009}. An early formulation of the fractional $p$-Laplace operator was done by Ishii and Nakamura \cite{ishiiClassIntegralEquations2010} and existence of viscosity solutions was established. Di Castro, Kuusi and Palatucci extended the De Giorgi-Nash-Moser framework to study the regularity of the fractional $p$-Laplace equation in \cite{dicastroLocalBehaviorFractional2016, dicastroNonlocalHarnackInequalities2014}. The subsequent work of Cozzi \cite{cozziRegularityResultsHarnack2017} covered a stable (in the limit $s\to 1$) proof of H\"older regularity by defining a novel fractional De Giorgi class.   Explicit exponents for H\"older regularity were found in \cite{brascoHigherHolderRegularity2018,brascoContinuitySolutionsNonlinear2021}. Other works of interest are \cite{iannizzottoGlobalHolderRegularity2016,defilippisHolderRegularityNonlocal2019,chakerRegularityNonlocalProblems2021,garainHigherOlderRegularity2022, defilippisGradientRegularityMixed2022a} and in the parabolic context, some works of interest are \cite{banerjeeLocalPropertiesSubsolution2021a, banerjeeLowerSemicontinuityPointwise2022a, dingLocalBoundednessHolder2021,prasadExistenceVariationalSolutions2022,prasadLocalBoundednessVariational2021}.

\subsection{On historical development of intrinsic scaling}

The method of intrinsic scaling was developed by E.DiBenedetto in \cite{dibenedettoLocalBehaviourSolutions1986} to prove H\"older regularity for degenerate quasilinear parabolic equations modelled on the $p$-Laplace operator.  A technical requirement of the proof was a novel logarithmic estimate which aids in the expansion of positivity.  Subsequently, the proof of H\"older regularity for the singular case was given in \cite{ya-zheLocalBehaviorSolutions1988} by switching the scaling from time variable to the space variable. These results were collected in E.DiBenedetto's treatise \cite{dibenedettoDegenerateParabolicEquations1993}. After a gap of several years, E.DiBenedetto, U.Gianazza and V.Vespri \cite{dibenedettoHarnackEstimatesQuasilinear2008, dibenedettoHarnackInequalityDegenerate2012} were able to prove Harnack's inequality for the parabolic $p$-Laplace equations by a new technique involving an exponential change of variable in time. This proof relies on expansion of positivity estimates and does not involve logarithmic test functions. Then, a new proof of H\"older regularity was given with a more geometric flavour in \cite{gianazzaNewProofHolder2010d}. This theory was extended to generalized parabolic $p$-Laplace equations with Orlicz growth in \cite{hwangHolderContinuityBounded2015,hwangHolderContinuityBounded2015a}.

\begin{remark}
    In this paper, we assume that solutions to the nonlocal equation are locally bounded. Local boundedness for the nonlocal elliptic equations is proved in \cite[Theorem 1.1]{dicastroLocalBehaviorFractional2016} and \cite[Theorem 6.2]{cozziRegularityResultsHarnack2017}. In the parabolic case, local boundedness is proved in \cite{stromqvistLocalBoundednessSolutions2019,dingLocalBoundednessHolder2021,prasadLocalBoundednessVariational2021}.
\end{remark}

\section{Notations and Auxiliary Results}\label{sec2}
In this section, we will fix the notation, provide definitions and state some standard auxiliary results that will be used in subsequent sections.

\subsection{Notations}
We begin by collecting the standard notation that will be used throughout the paper:
\begin{description}
\descitem{N1}{N1} We shall denote $N$ to be the space dimension and by $z=(x,t)$ to be  a point in $ \RR^N\times (0,T)$.  
\descitem{N2}{N2} We shall alternately use $\dfrac{\partial f}{\partial t},\partial_t f,f'$ to denote the time derivative of f.
\descitem{N3}{N3} Let $\Omega$ be an open bounded domain in $\mathbb{R}^N$ with boundary $\partial \Omega$ and for $0<T\leq\infty$,  let $\Omega_T\coloneqq \Omega\times (0,T)$. 
\descitem{N4}{N4} We shall use the notation
\begin{equation*}
	\begin{array}{ll}
	B_{\rho}(x_0)=\{x\in\RR^N:|x-x_0|<\rho\}, &
	\overline{B}_{\rho}(x_0)=\{x\in\RR^N:|x-x_0|\leq\rho\},\\
    I_{\tht}(t_0)=\{t\in\RR:t_0-\tht<t<t_0\},
    &Q_{\rho,\tht}(z_0)=B_{\rho}(x_0)\times I_\tht(t_0).
    \end{array}
\end{equation*} 
\descitem{N5}{N5} The maximum of two numbers $a$ and $b$ will be denoted by $a\wedge b\coloneqq \max(a,b)$. 
\descitem{N6}{N6} Integration with respect to either space or time only will be denoted by a single integral $\int$ whereas integration on $\Om\times\Om$ or $\RR^N\times\RR^N$ will be denoted by a double integral $\iint$. 
\descitem{N7}{N7} We will  use $\iiint$ to denote integral over $\RR^N \times \RR^N \times (0,T)$. More specifically, we will use the notation $\iiint_{Q}$ and $\iiint_{B \times I}$ to denote the integral over $\iiint_{B \times B \times I}$.
\descitem{N8}{N8} The notation $a \lesssim b$ is shorthand for $a\leq C b$ where $C$ is a universal constant which only depends on the dimension $N$, exponents $p$, and the numbers $\Lambda$ and $s$. 
\descitem{N9}{N9} For a function $u$ defined on the cylinder $Q_{\rho,\theta}(z_0)$ and any level $k \in \bb{R}$ we write $w_{\pm} = (u-k)_{\pm}$ 
\descitem{N10}{N10} For any fixed $t,k\in\RR$ and set $\Om\subset\RR^N$, we denote $A_{\pm}(k,t) = \{x\in \Om: (u-k)_{\pm}(\cdot,t) > 0\}$; for any ball $B_r$ we write $A_{\pm}(k,t) \cap B_{r} = A_{\pm}(k,r,t)$. 
\descitem{N11}{N11} For any set $\Om \subset \RR^N$, we denote $C_\Omega:=(\Omega^c\times\Omega^c)^c=\left(\Omega\times\Omega\right) \cup \left( \Omega\times(\RR^N\setminus\Omega)\right) \cup \left((\RR^N\setminus\Omega)\times\Omega\right)$.
\end{description}
\subsection{Structure of the equation}
Let $s \in (0,1)$ and $p >1$ be fixed, and   $\La \geq 1$ be a given constant. For almost every $x,y \in \RR^N$, let $K:\RR^N\times\RR^N\times \RR \to [0,\infty)$ be a symmetric measurable function satisfying
\begin{equation}\label{boundsonKernel}
    \frac{(1-s)}{\Lambda|x-y|^{N+ps}}\leq K(x,y,t)\leq \frac{(1-s)\Lambda}{|x-y|^{N+ps}}. 
\end{equation}

In this paper, we are interested in the regularity theory for the equation 
\begin{align}\label{maineq}
   \partial_t u + \mathcal{L}u=0,
\end{align} where $\mathcal{L}$ is the operator formally defined by 
\begin{equation*}
   \mathcal{L}u=\text{P.V.}\int_{\RR^N} K(x,y,t)|u(x,t)-u(y,t)|^{p-2}(u(x,t)-u(y,t))\,dy,\,x\in\RR^N.
\end{equation*}

\subsection{Function spaces}
Let $1<p<\infty$, we denote by $p'=p/(p-1)$ the conjugate exponent of $p$. Let $\Om$ be an open subset of $\RR^N$, we  define the {\it Sobolev-Slobodeki\u i} space, which is the fractional analogue of Sobolev spaces as follows:
\begin{align*}
    W^{s,p}(\Om):=\left\{ \psi\in L^p(\Omega): [\psi]_{W^{s,p}(\Om)}<\infty \right\},\qquad \text{for} \  s\in (0,1),
\end{align*} where the seminorm $[\cdot]_{W^{s,p}(\Om)}$ is defined by 
\begin{align*}
    [\psi]_{W^{s,p}(\Om)}:=\left( \iint_{\Om\times\Om} \frac{|\psi(x)-\psi(y)|^p}{|x-y|^{N+ps}}\,dx\,dy \right)^{\frac 1p}.
\end{align*}
The space when endowed with the norm $\norm{\psi}_{W^{s,p}(\Om)}=\norm{\psi}_{L^p(\Om)}+[\psi]_{W^{s,p}(\Om)}$ becomes a Banach space. The space $W^{s,p}_0(\Om)$ is the completion of $C_c^\infty(\Om)$ in the norm of $W^{s,p}(\RR^N)$. We will use the notation $W^{s,p}_{u_0}(\Om)$ to denote the space of functions in $W^{s,p}(\RR^N)$ such that $u-u_0\in W^{s,p}_0(\Om)$.

Let $I$ be an interval and let $V$ be a separable, reflexive Banach space, endowed with a norm $\norm{\cdot}_V$. We denote by $V^*$ to be its topological dual space. Let $v$ be a mapping such that for a.e. $t\in I$, $v(t)\in V$. If the function $t\mapsto \norm{v(t)}_V$ is measurable on $I$, then $v$ is said to belong to the Banach space $L^p(I;V)$ provided $\int_I\norm{v(t)}_V^p\,dt<\infty$. It is well known that the dual space $L^p(I;V)^*$ can be characterized as $L^{p'}(I;V^*)$.

Since the boundedness result requires some finiteness condition on the nonlocal tails, we define the tail space for some $m >0$ and $s >0$ as follows:
\begin{equation*}
    L^m_{s}(\RR^N):=\left\{ v\in L^m_{\text{loc}}(\RR^N):\int_{\RR^N}\frac{|v(x)|^m}{1+|x|^{N+s}}\,dx<+\infty \right\}.
\end{equation*}
Then a nonlocal tail is defined by 
\begin{equation*}
    \text{Tail}_{m,s,\infty}(v;x_0,R,I):=\text{Tail}_\infty(v;x_0,R,t_0-\tht,t_0):=\sup_{t\in (t_0-\tht, t_0)}\left( R^{sm}\int_{\RR^N\setminus B_R(x_0)} \frac{|v(x,t)|^{m-1}}{|x-x_0|^{N+sm}}\,dx \right)^{\frac{1}{m-1}},
\end{equation*} where $(x_0,t_0)\in \RR^N\times (-T,T)$ and the interval $I=(t_0-\tht,t_0)\subseteq (-T,T)$. From this definition, it follows that for any $v\in L^\infty(-T,T;L^{m-1}_{sm}(\RR^N))$, there holds $\text{Tail}_{m,s,\infty}(v;x_0,R,I)<\infty$. 

\subsection{Definition of weak solution}

Now, we are ready to state the definition of a weak sub(super)-solution.

\begin{definition}
    A function $u\in L^p_{\text{loc}}(I;W^{s,p}_{\text{loc}}(\Om))\cap C_{\text{loc}}(I;L^2_{\text{loc}}(\Om))\cap L^\infty_{\text{loc}}(I;L^{p-1}_{sp}(\RR^N))$ is said to be a local weak sub(super)-solution to \cref{maineq} in $\Omega\times I$ if for any closed interval $[t_1,t_2]\subset I$ and any compact set $\Omega' \subset \Omega$, the following holds:
    \begin{align*}
        \int_{\Om'}& u(x,t_2)\phi(x,t_2)\,dx - \int_{\Om'} u(x,t_1)\phi(x,t_1)\,dx - \int_{t_1}^{t_2}\int_{\Om'} u(x,t)\partial_t\phi(x,t)\,dx\,dt\\
        &+\int_{t_1}^{t_2}\iint_{\bb{R}^N\times\bb{R}^N}\,K(x,y,t)|u(x,t)-u(y,t)|^{p-2}(u(x,t)-u(y,t))(\phi(x,t)-\phi(y,t))\,dy\,dx\,dt
        \leq (\geq)0 ,
    \end{align*} for all $\phi\in L^p_{\text{loc}}(I,W^{s,p}_0(\Omega'))\cap W^{1,2}_{\text{loc}}(I,L^2(\Om'))$. 
\end{definition}

\subsection{Auxiliary Results}
We collect the following standard results which will be used in the course of the paper. We begin with the Sobolev-type inequality~\cite[Lemma 2.3]{dingLocalBoundednessHolder2021}.

\begin{theorem}\label{fracpoin}
    Let $t_2>t_1>0$ and suppose $s\in(0,1)$ and $1\leq p<\infty$. Then for any $f\in L^p(t_1,t_2;W^{s,p}(B_r))\cap L^\infty(t_1,t_2;L^2(B_r))$, we have
    \begin{align*}
        \int_{t_1}^{t_2}\fint_{B_r}|f(x,t)|^{p\left(1+\frac{2s}{N}\right)}\,dx\,dt
         \leq  C(N,s,p) &\left(r^{sp}\int_{t_1}^{t_2}\int_{B_r}\fint_{B_r}\frac{|f(x,t)-f(y,t)|^p}{|x-y|^{N+sp}}\,dx\,dy\,dt+\int_{t_1}^{t_2}\fint_{B_r}|f(x,t)|^p\,dx\,dt\right) \\
        & \times\left(\sup_{t_1<t<t_2}\fint_{B_r}|f(x,t)|^2\,dx\right)^{\frac{sp}{N}}.
    \end{align*}  
\end{theorem}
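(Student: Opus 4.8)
The plan is to combine the fractional Sobolev inequality on balls with an interpolation in the integrability exponent, exactly as in the classical Gagliardo–Nirenberg argument adapted to the parabolic setting. First I would fix $t$ and work on a time-slice: by the fractional Sobolev embedding $W^{s,p}(B_r)\hookrightarrow L^{p^*_s}(B_r)$ with $p^*_s = \frac{Np}{N-sp}$ when $sp < N$ (and $L^q$ for every $q<\infty$ when $sp\geq N$, which can be handled by a minor separate argument or by decreasing $s$ slightly), together with a scaling-tracking of the constant in $r$, one gets
\begin{align*}
    \fint_{B_r}|f(x,t)|^{p^*_s}\,dx \leq C(N,s,p)\left(r^{sp}\int_{B_r}\fint_{B_r}\frac{|f(x,t)-f(y,t)|^p}{|x-y|^{N+sp}}\,dx\,dy + \fint_{B_r}|f(x,t)|^p\,dx\right)^{\frac{p^*_s}{p}}.
\end{align*}
Next I would interpolate the exponent $q := p\left(1+\frac{2s}{N}\right)$ between $2$ and $p^*_s$: one checks that $\frac{1}{q} = \frac{\vartheta}{2} + \frac{1-\vartheta}{p^*_s}$ holds with a choice of $\vartheta\in(0,1)$ for which $(1-\vartheta)\frac{q}{p^*_s} = 1$, so that Hölder's inequality on each slice gives
\begin{align*}
    \fint_{B_r}|f(x,t)|^{q}\,dx \leq \left(\fint_{B_r}|f(x,t)|^2\,dx\right)^{\frac{\vartheta q}{2}}\left(\fint_{B_r}|f(x,t)|^{p^*_s}\,dx\right)^{\frac{(1-\vartheta)q}{p^*_s}},
\end{align*}
and the arithmetic confirms $\frac{\vartheta q}{2} = \frac{sp}{N}$ while the second exponent is $1$. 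Substituting the slicewise Sobolev estimate into the second factor, bounding the $L^2$-average by its supremum over $(t_1,t_2)$ (which is where the $\left(\sup_t \fint_{B_r}|f|^2\right)^{sp/N}$ factor comes from), and finally integrating in $t$ over $(t_1,t_2)$ yields the claim — the $t$-integral lands precisely on the combination $r^{sp}\int\!\int\!\int \frac{|f(x,t)-f(y,t)|^p}{|x-y|^{N+sp}} + \int \fint |f|^p$ because the slicewise Sobolev term appears to the first power after the interpolation.

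The main obstacle, and the only place requiring genuine care, is bookkeeping the dependence of the Sobolev constant on the radius $r$: one must verify that the correct power of $r$ appearing is exactly $r^{sp}$ multiplying the Gagliardo seminorm (and $r^0$ multiplying the $L^p$-average), which follows from the homogeneity of the seminorm $[\,\cdot\,]_{W^{s,p}(B_r)}$ under dilation together with the normalization of the averaged integrals $\fint$. A secondary point is the borderline regime $sp \geq N$, where $p^*_s = \infty$ formally; there one replaces $p^*_s$ by any sufficiently large finite exponent $\bar q > q$ (the embedding still holds into $L^{\bar q}(B_r)$ with an $r$-dependent constant), and the interpolation argument goes through verbatim with $\vartheta$ adjusted accordingly — the final exponent $sp/N$ on the supremum term is unaffected since it is dictated by the target exponent $q = p(1+2s/N)$ and the value $2$, not by $p^*_s$. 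Since this is cited verbatim from \cite[Lemma 2.3]{dingLocalBoundednessHolder2021}, I would in practice simply invoke that reference; the sketch above is what one would write if a self-contained proof were wanted.
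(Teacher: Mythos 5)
The paper does not prove this lemma at all --- it is quoted verbatim from \cite[Lemma 2.3]{dingLocalBoundednessHolder2021} --- so your decision to simply invoke that reference is exactly what the paper does, and your sketch is the standard slicewise--Sobolev--plus--H\"older argument underlying it. For $sp<N$ the sketch is essentially right, but two points need repair. First, the interpolation condition is mis-stated: with $q=p\bigl(1+\tfrac{2s}{N}\bigr)$, the requirement that the Sobolev factor appear to the first power after substituting $\fint_{B_r}|f|^{p^*_s}\le C(\cdots)^{p^*_s/p}$ is $(1-\vartheta)q=p$, i.e.\ $(1-\vartheta)\tfrac{q}{p}=1$, not $(1-\vartheta)\tfrac{q}{p^*_s}=1$; your own subsequent identities $\vartheta q/2=sp/N$ and $\tfrac1q=\tfrac{\vartheta}{2}+\tfrac{1-\vartheta}{p^*_s}$ are consistent only with the former. (Equivalently, skip Lyapunov interpolation and apply H\"older directly to $|f|^q=|f|^p\cdot|f|^{2sp/N}$ with conjugate exponents $\tfrac{N}{N-sp}$ and $\tfrac{N}{sp}$, which also avoids any discussion of whether $q$ lies between $2$ and $p^*_s$.) Second, the remark about $sp\ge N$ does not go through as claimed: the three constraints --- target exponent $q$, exponent $sp/N$ on the $L^2$ factor, and first power on the energy bracket --- force the intermediate exponent to be exactly $\tfrac{Np}{N-sp}$, so it cannot be replaced by an arbitrary large finite $\bar q$ ``with $\vartheta$ adjusted''; keeping the stated form of the inequality in that regime requires a genuinely different argument (reducing $s$ so that $\tilde sp<N$, for instance, degrades the exponent $q$). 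Since the borderline case is inherited from the cited lemma, this does not affect the paper, but your sketch as written would not close for $sp\ge N$.
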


We also list a number of algebraic inequalities that are customary in obtaining energy estimates for nonlinear nonlocal equations.

\begin{lemma}(\cite[Lemma 4.1]{cozziRegularityResultsHarnack2017})\label{pineq1}
    Let $p\geq 1$ and $a,b\geq 0$, then for any $\tht\in [0,1]$,  the following holds:
    \begin{align*}
        (a+b)^p-a^p\geq \tht p a^{p-1}b + (1-\tht)b^p.
    \end{align*}
\end{lemma}

\begin{lemma}(\cite[Lemma 4.3]{cozziRegularityResultsHarnack2017})\label{pineq3}
    Let $p\geq 1$ and $a\geq b\geq 0$, then for any $\ve>0$, the following holds:
    \begin{align*}
        a^p-b^p\leq \ve a^p+\left(\frac{p-1}{\ve}\right)^{p-1}(a-b)^p.
    \end{align*}
\end{lemma}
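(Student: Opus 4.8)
The plan is to deduce the inequality from two elementary facts: the tangent-line (convexity) bound for the power function, and Young's inequality with a tunable constant. Since both sides are positively homogeneous of degree $p$ in $(a,b)$, and the case $p=1$ is trivial (the right-hand side is then $\ve a+(a-b)$, using the convention $0^0=1$), I may and do assume $p>1$.

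First I would record that for $0\le b\le a$ one has
\begin{align*}
a^p-b^p\le p\,a^{p-1}(a-b),
\end{align*}
which is immediate from the mean value theorem applied to $t\mapsto t^p$ on $[b,a]$ (or from convexity: $b^p\ge a^p+pa^{p-1}(b-a)$). Next I would apply Young's inequality to the product on the right with the conjugate exponents $p'=\tfrac{p}{p-1}$ and $p$, after inserting a free parameter $\lambda>0$:
\begin{align*}
p\,a^{p-1}(a-b)=\bigl(\lambda a^{p-1}\bigr)\Bigl(\tfrac{p}{\lambda}(a-b)\Bigr)\le \frac{\lambda^{p'}}{p'}a^{p}+\frac{1}{p}\Bigl(\frac{p}{\lambda}\Bigr)^{p}(a-b)^{p}.
\end{align*}
Choosing $\lambda$ so that $\lambda^{p'}/p'=\ve$, i.e.\ $\lambda=(\ve p')^{1/p'}$, makes the first term exactly $\ve a^p$; and since $p/p'=p-1$, the coefficient of $(a-b)^p$ becomes
\begin{align*}
\frac{1}{p}\Bigl(\frac{p}{\lambda}\Bigr)^{p}=\frac{p^{p-1}}{(\ve p')^{p-1}}=\Bigl(\frac{p}{\ve p'}\Bigr)^{p-1}=\Bigl(\frac{p-1}{\ve}\Bigr)^{p-1},
\end{align*}
which is exactly the stated constant. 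Chaining the three displays proves the lemma.

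There is essentially no obstacle here; the only care needed is to pick the Young parameter so that the constant comes out precisely $\bigl(\tfrac{p-1}{\ve}\bigr)^{p-1}$ rather than merely a constant of that order. As a sanity check (and to see that the constant is sharp) one can instead normalize $a=1$, set $t=1-b\in[0,1]$, use $1-(1-t)^p\le pt$, and maximize $pt-C_\ve t^p$ over $t\ge 0$: the maximum equals $(p-1)C_\ve^{-1/(p-1)}$, which is $\le\ve$ exactly when $C_\ve\ge\bigl(\tfrac{p-1}{\ve}\bigr)^{p-1}$.
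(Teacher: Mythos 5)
Your argument is correct: the convexity bound $a^p-b^p\le p\,a^{p-1}(a-b)$ followed by Young's inequality with the parameter $\lambda=(\ve p')^{1/p'}$ does yield exactly the constant $\left(\frac{p-1}{\ve}\right)^{p-1}$, and the degenerate case $p=1$ is handled correctly. The paper itself gives no proof of this lemma (it is quoted from \cite[Lemma 4.3]{cozziRegularityResultsHarnack2017}), and your proof coincides with the standard argument used there, so there is nothing further to reconcile.
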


Finally, we recall the following well known lemma concerning the geometric convergence of sequence of numbers (see \cite[Lemma 4.1 from Section I]{dibenedettoDegenerateParabolicEquations1993} for the details): 
\begin{lemma}\label{geo_con}
	Let $\{Y_n\}$, $n=0,1,2,\ldots$, be a sequence of positive number, satisfying the recursive inequalities 
	\[ Y_{n+1} \leq C b^n Y_{n}^{1+\alpha},\]
	where $C > 1$, $b>1$, and $\alpha > 0$ are given numbers. If 
	\[ Y_0 \leq  C^{-\frac{1}{\alpha}}b^{-\frac{1}{\alpha^2}},\]
	then $\{Y_n\}$ converges to zero as $n\to \infty$. 
\end{lemma}

\subsection{Main results}

We prove the following main theorems.

\begin{theorem}\label{holderparabolic}
    Let $p\in(2,\infty)$ and let $u$ be a locally bounded, local weak solution to \cref{maineq} in $\Om_T$. Then $u$ is locally H\"older continuous in $\Om_T$, i.e., there exist constants $C_0>1$, and $\beta\in (0,1)$ depending only on the data, such that with $Q_0:=B_{C_0R}\times (-(C_0R)^{2s},0)$ and $L$, $R_i$, $d_i$ for $i \in \{0,1,\ldots\}$ as defined in  \cref{defofL} and $\mathbf{C}_4$ as defined in \cref{eq:C4}, we have 
    \begin{align*}
        |u(x_1,t_1)-u(x_2,t_2)|\leq \frac{L}{R^{\beta}}  \left(\max\{|x_1-x_2|,L^{\frac{p-2}{sp}}\mathbf{C}_4|t_1-t_2|^{\frac{1}{sp}}\}\right)^{\beta}\,
    \end{align*} for every pair of points $(x_1,t_1), (x_2,t_2)\in B_R(0)\times (-d_1R^{sp},0)$.
\end{theorem}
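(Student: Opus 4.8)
The plan is to follow the by-now classical De Giorgi--Nash--Moser scheme adapted to intrinsic scaling, but carried out in the \emph{exponential-in-time variable} of DiBenedetto--Gianazza--Vespri so that expansion of positivity replaces logarithmic test functions. Concretely, I would fix a reference cylinder $Q_0=B_{C_0R}\times(-(C_0R)^{2s},0)$ on which $u$ is bounded, normalize so that the essential oscillation there is controlled by a quantity $L$, and then prove a \emph{reduction of oscillation} statement: there is a sequence of nested intrinsic cylinders $Q_i$, with spatial radii $R_i\downarrow 0$ and time-scales $d_iR_i^{sp}$ chosen according to the intrinsic scaling $d_i\sim(\operatorname*{ess\,osc}_{Q_i}u)^{2-p}$, on which $\operatorname*{ess\,osc}_{Q_i}u\le L\,2^{-i}$ (up to constants and up to the nonlocal tail contributions). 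Iterating this geometric decay and converting radii back to the Euclidean metric yields the stated two-variable Hölder estimate, with the anomalous time-weight $L^{(p-2)/(sp)}\mathbf C_4$ coming precisely from the intrinsic time-scaling $d_i\sim L^{2-p}$ (this is the source of \cref{rem:nonlocalB}).

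The core analytic input is the energy (Caccioppoli) estimate for $w_\pm=(u-k)_\pm$ on intrinsic cylinders, which I would derive by testing the weak formulation with $\phi\sim w_\pm\zeta^p$ for suitable cutoffs $\zeta$ (handling the time term by a Steklov-averaging or mollification argument to legitimize $\partial_t u$). The crucial feature to retain is the ``good term'': besides the usual Gagliardo-type energy $\iiint K|w_\pm(x)-w_\pm(y)|^p\zeta^p$, the nonlocal structure produces an extra nonnegative term of the form $\iiint K\,w_\mp(y)\,w_\pm(x)^{p-1}\zeta(x)^p$ controlling, via the level-set geometry, the measure of $\{u\lessgtr k\}$ inside the ball. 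Using this good term together with \cref{fracpoin} (the Sobolev--Poincaré inequality with the $\sup$ of the $L^2$-norm), I would run: (i) a De Giorgi-type $L^\infty$ estimate / shrinking lemma showing that if $u$ is, say, mostly below the midrange on a cylinder then it is strictly below a higher level on a smaller cylinder; (ii) the measure-shrinking step, where the good term plays the role of the De Giorgi isoperimetric inequality, propagating a smallness-of-measure hypothesis; and (iii) a measure-to-pointwise step via \cref{geo_con}. All tail terms are absorbed using $u\in L^\infty_{\mathrm{loc}}(I;L^{p-1}_{sp}(\RR^N))$ and the definition of $\mathrm{Tail}_\infty$, choosing $C_0$ large.

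The genuinely delicate point, and the main obstacle, is the \emph{alternative argument} that distinguishes the degenerate case $p>2$: one does not know a priori which of $\{u$ near its sup$\}$ or $\{u$ near its inf$\}$ occupies a large portion of the intrinsic cylinder, so the oscillation-reduction must branch. In one alternative the measure of a sublevel set is small on a full intrinsic cylinder and one pushes it to zero (a ``first alternative'' DeGiorgi iteration); in the other, the large-measure information must be propagated \emph{forward in time} over many time-slices, and it is here that the good term and the exponential time change are essential to get a clean expansion of positivity without logarithmic estimates. Getting the bookkeeping of the constants $L$, $R_i$, $d_i$, and $\mathbf C_4$ to close consistently across both alternatives and across all scales $i$—so that the same $L$ works at every step—is the technical heart; once that induction is set up correctly, the passage to the final Hölder bound and the switch from the intrinsic parabolic distance to $\max\{|x_1-x_2|,\ L^{(p-2)/(sp)}\mathbf C_4|t_1-t_2|^{1/(sp)}\}$ is routine. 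A secondary but nontrivial issue is justifying all manipulations of $\partial_t u$ for merely $C_{\mathrm{loc}}(I;L^2_{\mathrm{loc}})$ solutions, which I would handle by a standard mollification-in-time and a limiting argument.
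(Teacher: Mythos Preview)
Your overall architecture matches the paper's: nested intrinsic cylinders $Q_{R_i}(d_i)$ with $d_i\sim (C_0^{-\beta i}L)^{2-p}$, oscillation decay $M_i-m_i=C_0^{-\beta i}L$ obtained by applying an expansion-of-positivity proposition (built from the energy estimate with the good term, the shrinking lemma, a forward-in-time De Giorgi lemma, and the exponential time change), and then the routine conversion to the stated H\"older bound. The alternative is also simpler than you describe: the paper does \emph{not} run a classical first/second alternative on a full cylinder; it simply looks at a single time slice $s_j$ and checks whether $\{v(\cdot,s_j)\ge 0\}$ or its complement has at least half the measure of a ball, then applies the \emph{same} expansion-of-positivity proposition to $w=\tfrac{C_0^{-\beta j}L}{2}\pm v$ in either branch. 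So the branching is symmetric and the bookkeeping is uniform across the two cases.

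There is, however, a genuine missing idea in your tail handling. You write that ``all tail terms are absorbed using $u\in L^\infty_{\mathrm{loc}}(I;L^{p-1}_{sp})$ and choosing $C_0$ large,'' but this does not close the induction: at step $j$ you must verify the tail alternative in the expansion-of-positivity proposition, namely $\tail\big((w)_-;R_j,I_j\big)\lesssim \eta\,C_0^{-\beta j}L$, and the global tail of $u$ is only of size $L$, not $C_0^{-\beta j}L$. The paper's mechanism (following Cozzi in the elliptic case) is to split the tail integral at step $j$ into the far field $\RR^N\setminus B_0$ and the annuli $B_{l}\setminus B_{l+1}$ for $0\le l<j$. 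On each annulus you invoke the \emph{induction hypothesis} $m_l\le u\le M_l$ to get $|v(x,t)|\le \big((|x|/(C_0R))^\beta-\tfrac12 C_0^{-\beta j}\big)L$, which after rescaling produces the scale-invariant integral $\int_1^\infty(\rho^\beta-1)^{p-1}\rho^{-1-sp}\,d\rho$. This integral is made small by choosing $\beta$ small (this is the second condition in \cref{choiceofalpha}), \emph{not} by choosing $C_0$ large. The far-field piece contributes $C_0^{-(sp-\beta(p-1))j}L^{p-1}$, which is made small by running the first $j_0$ steps trivially (with $j_0$ as in \cref{choiceofj0}) so that $j\ge j_0$. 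The role of $C_0$ large is only to guarantee the time-inclusion $Q_{R_{j+1}}(d_{j+1})$ sits inside the output time window of the expansion-of-positivity proposition. Without this annular tail decomposition driven by the induction hypothesis and the smallness of $\beta$, the iteration does not close.
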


\begin{theorem}\label{holderparabolicsin}
	Let $p\in(1,2)$ and let $u$ be a locally bounded, local weak solution to \cref{maineq} in $\Om_T$. Then $u$ is locally H\"older continuous in $\Om_T$, i.e., there exist constants $C_0>1$, and $\beta\in (0,1)$ depending only on the data, such that for a fixed $\ve>0$ with $Q_0:=B_{(C_0R)^{1-\ve_o}}\times (-(C_0R)^{sp},0)$ and $L$, $R_i$, $d_i$ for $i \in \{0,1,\ldots\}$ as defined in  \cref{defofLsin} and $\mathbf{C}_4$ as defined in \cref{eq:C4sin}, we have
	\begin{align*}
		|u(x_1,t_1)-u(x_2,t_2)|\leq \frac{L}{R^{\beta}}  \left(\max\{L^{\frac{2-p}{sp}}\mathbf{C}_4|x_1-x_2|,|t_1-t_2|^{\frac{1}{sp}}\}\right)^{\beta}\,
	\end{align*} for every pair of points $(x_1,t_1), (x_2,t_2)\in B_{d_1R}(0)\times (-R^{sp},0)$.
\end{theorem}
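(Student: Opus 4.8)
The plan is to follow the intrinsic scaling / De Giorgi iteration scheme, but with the exponential change of variable in time that converts the expansion of positivity into a form insensitive to the scaling anomaly, exactly as in the work of DiBenedetto--Gianazza--Vespri and the direct approach of Liao. The backbone is a reduction-of-oscillation statement: if $u$ oscillates by at most $\omega$ on an intrinsic cylinder $Q_0$ (scaled so that time is measured in units $d_1 R^{sp}$ and space in units $(C_0R)^{1-\ve_o}$ to accommodate the nonlocal tail), then on a smaller intrinsic cylinder the oscillation is reduced by a fixed factor $\eta<1$. Iterating this along the sequence of radii $R_i$ and levels $d_i$ defined in \cref{defofLsin} yields a geometric decay of the oscillation, which is the content of the claimed H\"older estimate once one unwinds the intrinsic scaling; the appearance of the mixed quantity $\max\{L^{(2-p)/(sp)}\mathbf{C}_4|x_1-x_2|,\,|t_1-t_2|^{1/(sp)}\}$ is precisely the statement that, after this rescaling, a parabolic "distance" controls the oscillation, and in the singular range it is the time variable that sets the gauge (this is the phenomenon flagged in \cref{rem:nonlocalB}).

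First I would establish the energy (Caccioppoli) estimate for the truncations $w_\pm=(u-k)_\pm$ on intrinsic cylinders, carefully retaining the "good term" coming from the nonlocal part — the contribution of the region where $u(y,t)$ lies far below the level $k$, which plays the role of the missing De Giorgi isoperimetric inequality. Combining this with the Sobolev-type inequality of \cref{fracpoin} and the geometric-convergence \cref{geo_con} gives the first alternative: a De Giorgi "shrinking" lemma stating that if the set where $u$ is close to its supremum (resp.\ infimum) occupies a small enough fraction of an intrinsic cylinder, then $u$ stays strictly away from that extremum on a smaller cylinder. Here the tail term must be absorbed using $u\in L^\infty_{\loc}(I;L^{p-1}_{sp}(\RR^N))$ and the intrinsic choice of the spatial radius $(C_0R)^{1-\ve_o}$; the exponent $1-\ve_o$ is chosen exactly so that the tail is a lower-order perturbation. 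The second alternative is handled by the expansion of positivity: after the exponential change of variable $v=u\,e^{ct}$ (or the corresponding transformation adapted to the $p$-structure), one propagates a measure-theoretic lower bound forward in time and then upgrades it to a pointwise bound, again via the energy estimate and \cref{geo_con}, \emph{without} logarithmic test functions. The constants $\mathbf{C}_4$ in \cref{eq:C4sin} and $L$ in \cref{defofLsin} bookkeep the accumulated dependence through these iterations.

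The main obstacle I expect is twofold. Technically, it is the uniform control of the nonlocal tail through the \emph{entire} iteration: at each step the level $k$ and the radius change, and one must show the tail contribution does not destroy the geometric decrease of the oscillation — this is where the peculiar scaling with $1-\ve_o$ and the constant $\mathbf{C}_4$ enter, and it is the delicate point that differs from the local theory and also (as the authors note) from the $p\ge 2$ treatment in the literature. Conceptually, the singular range $1<p<2$ forces the intrinsic scaling to be run in the time variable rather than the space variable (the analogue of Chen--DiBenedetto's switch in \cite{ya-zheLocalBehaviorSolutions1988}), so the roles of $|x_1-x_2|$ and $|t_1-t_2|$ in the final estimate are exchanged relative to Theorem~\ref{holderparabolic}; getting the measure-shrinking and expansion-of-positivity steps to close in this regime, and tracking which power of $L$ the time-space comparison carries (namely $L^{(2-p)/(sp)}$), is the part that requires genuine care rather than routine adaptation.

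Once the reduction of oscillation is in place on the nested intrinsic cylinders indexed by $R_i$ and $d_i$, the passage to the stated H\"older modulus of continuity is standard: for any two points $(x_1,t_1),(x_2,t_2)\in B_{d_1R}(0)\times(-R^{sp},0)$ one locates the smallest index $i$ for which both points lie in the $i$-th cylinder, estimates $|u(x_1,t_1)-u(x_2,t_2)|$ by the oscillation there, and converts the geometric decay in $i$ into the power $\beta$ of the parabolic distance, with $\beta$ determined by $\eta$ and the scaling exponents. This yields the inequality exactly as displayed, completing the proof of Theorem~\ref{holderparabolicsin}.
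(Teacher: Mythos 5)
Your overall strategy is the one the paper follows: an inductive reduction of oscillation on nested intrinsic cylinders (\cref{claimfinalsin}), with the expansion of positivity of \cref{prop:sinexp} (built on the exponential change of variable) supplying the one-step decay, and the final H\"older estimate obtained by the standard two-index covering argument. Two points, however, need attention.

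First, a factual slip: in the singular range the intrinsic scaling is applied to the \emph{space} variable, not the time variable. The cylinders in \cref{defofLsin} are $Q_{R_i}(d_i)=B_{d_iR_i}\times(-R_i^{sp},0)$, i.e.\ the spatial radius carries the factor $d_i=(\eta C_0^{-\beta i}L)^{(p-2)/(sp)}\delta$ while the time length is the unweighted $R_i^{sp}$; this is exactly the Chen--DiBenedetto switch \emph{from} time \emph{to} space, and it is why the factor $L^{(2-p)/(sp)}\mathbf{C}_4$ ends up multiplying $|x_1-x_2|$ rather than $|t_1-t_2|^{1/(sp)}$ in the conclusion. Your text asserts the opposite, even though you then draw the correct consequence about the exchange of roles relative to the degenerate case.

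Second, and more substantively, you correctly identify the uniform tail control through the iteration as the main obstacle but do not resolve it, and this is where essentially all the work of the paper's proof lies. The paper handles it in Step 4 of \cref{claimfinalsin}: the tail of $\bigl(\tfrac{C_0^{-\beta j}L}{2}\pm v\bigr)_-$ is split into the annulus $B_0\setminus B_j$, where the induction hypothesis $m_l\le u\le M_l$ on each intermediate cylinder gives the pointwise bounds \cref{estim2sin}--\cref{estim3sin} and hence a convergent integral $\int_1^\infty(\rho^{\beta/\Gamma}-1)^{p-1}\rho^{-1-sp}\,d\rho$, and the exterior of $B_0$, where the enlarged radius $(C_0R)^{1-\ve_o}$ together with $j\ge j_0$ from \cref{choiceofj0sin} makes the contribution decay like $C_0^{j(\beta-sp)}$. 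The smallness conditions \cref{choiceofalphasin} on $\beta$ and the normalization $L\ge 1$ are chosen precisely so that the total is at most $(\eta/2)^{p-1}$ times the $(p-1)$-th power of the current oscillation, which is what allows \cref{prop:sinexp} to be applied at every step; one must also make universal choices of $C_0$ guaranteeing that the time slice $s_j$ lies inside $(-R_j^{sp},0)$ and that $Q_{R_{j+1}}(d_{j+1})$ sits inside the region where positivity has been expanded. Without carrying out these computations the induction does not close, so the proposal as written is an outline of the correct route rather than a proof.
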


\section{Preliminary Results}\label{sec3}

\subsection{Energy estimates}

\begin{theorem}\label{energyest}
	Let $u$ be a local, weak sub(super)-solution in $B_{R}(x_0)\times (t_0-\tht,t_0)\Subset E_T$. Let $\tau_1,\tau_2 \in (0,\infty)$ be given such that $t_0-\tht<t_0-\tau_2<t_0-\tau_1\leq t_0$ and $0 < r < R$. Then there is positive universal constant $C(N,p,s,\La)$ such that for every level $k \in \bb{R}$ and every piecewise smooth cutoff function $\zeta(x,t)=\zeta_1(x)\zeta_2(t)$ with $\zeta_1 \in C_c^{\infty}(B_R)$, $\zeta_2 \in C^{0,1}(t_0-\theta,t_0)$, $0 \leq \zeta(x,t) \leq 1$, $\zeta_1 \equiv 1$ on $B_r(x_0)$,   we have
\begin{equation*}
	\def\arraystretch{1.6}
	\begin{array}{l}
    \underset{t_0-\tau_2<t <t_0-\tau_1 }{\esssup}\int_{B_r(x_0)}w_{\pm}^2\zeta^p(x,t)\,dx+ \int_{t_0-\tau_2}^{t_0-\tau_1}\int_{B_R(x_0)} \zeta^p(x,t)w_{\pm}(x,t)\int_{{B_{R}(x_0)}}\frac{w_{\mp}^{p-1}(y,t)}{|x-y|^{N+sp}}\,dy\,dx\,dt \\
    \hspace*{2cm}+\int_{t_0-\tau_2}^{t_0-\tau_1}\iint_{B_{r}(x_0)\times B_{r}(x_0)} \frac{|w_{\pm}(x,t)-w_{\pm}(y,t)|^p}{|x-y|^{N+ps}}\max\{\zeta_1(x),\zeta_1(y)\}^p\zeta_2^p(t)\,dx\,dy\,dt\\ 
        \hspace*{3cm}\leq  \int_{{B_R(x_0)}}w_{\pm}^2\zeta^p(x,t_0-\tau_2)\,dx\\
         \hspace*{4cm}+ C \int_{t_0-\tau_2}^{t_0-\tau_1}\iint_{{B_R(x_0)}\times {B_R(x_0)}} \max\{w_{\pm}(x,t),w_{\pm}(y,t)\}^p\frac{|\zeta_1(x)-\zeta_1(y)|^p}{|x-y|^{N+sp}}\zeta_2^p(t)\,dx\,dy\,dt \\ 
         \hspace*{4cm}+ C \int_{t_0-\tau_2}^{t_0-\tau_1}\int_{ {B_R(x_0)}} w_{\pm}^2(x,t)|\de_t \zeta(x,t)| \,dx\,dt \\
       \hspace*{4cm} + C\int_{t_0-\tau_2}^{t_0-\tau_1}\int_{ {B_R(x_0)}}\zeta^p(x,t)w_{\pm}(x,t) \lbr \esssup\limits_{t\in (t_0-\tau_2,t_0-\tau_1)}\int_{{\RR^N\setminus B_{R}(x_0)}} \frac{w_{\pm}(y,t)^{p-1}}{|x-y|^{N+sp}}\,dy \rbr\,dx\,dt,
    \end{array}
\end{equation*}
where $w_{\pm} = (u-k)_{\pm}$.
\end{theorem}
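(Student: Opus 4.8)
The plan is to test the weak formulation of \cref{maineq} with $\phi = w_\pm\zeta^p$, reading off the three left‑hand terms from the parabolic part together with the $B_R(x_0)\times B_R(x_0)$ portion of the nonlocal part, and the four right‑hand terms from those two pieces plus the nonlocal tail. I will write the argument for a subsolution and $w_+ = (u-k)_+$; the supersolution / $w_-$ case follows by applying the result to $-u$ at level $-k$. Since $u$ is locally bounded, $w_+$ is bounded on $B_R(x_0)\times(t_0-\tht,t_0)$, but $w_+\zeta^p$ need not be differentiable in time, so I would first replace $u$ by its Steklov average $u_h$, insert $(u_h-k)_+\zeta^p$ into the averaged identity, and pass to the limit $h\to 0$ only at the end, using $u_h\to u$ in $L^p_{\loc}(I;W^{s,p}_{\loc})\cap C_{\loc}(I;L^2_{\loc})$ and dominated convergence in the nonlocal and tail integrals; I suppress this regularization below. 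For the parabolic part, the identity $\partial_t u\cdot w_+ = \tfrac12\partial_t(w_+^2)$ and an integration by parts in time turn the first three terms of the weak formulation (taken on $\Om' = B_R(x_0)$) into $\tfrac12\int_{B_R}w_+^2\zeta^p(\cdot,t_2) - \tfrac12\int_{B_R}w_+^2\zeta^p(\cdot,t_1) - \tfrac12\int_{t_1}^{t_2}\!\int_{B_R}w_+^2\,\partial_t(\zeta^p)$; choosing $t_1 = t_0-\tau_2$, letting $t_2$ range over $(t_0-\tau_2,t_0-\tau_1)$, using $|\partial_t\zeta^p|\leq p|\partial_t\zeta|$ (valid since $0\leq\zeta\leq1$) and taking the supremum over $t_2$ produces the first left‑hand term and the $|\partial_t\zeta|$ term on the right.

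Next I would split the nonlocal term. Since $\phi$ vanishes off $B_R(x_0)$ and both $K$ and $|u(x)-u(y)|^{p-2}(u(x)-u(y))(\phi(x)-\phi(y))$ are symmetric in $x\leftrightarrow y$, the nonlocal term equals the integral over $B_R(x_0)\times B_R(x_0)$ plus twice the integral over $B_R(x_0)\times(\RR^N\setminus B_R(x_0))$, in which $\phi(y)=0$. In this second block, on the set $\{w_+(x)>0\}$ one has $u(x)>k$, hence $-|u(x)-u(y)|^{p-2}(u(x)-u(y)) = |u(y)-u(x)|^{p-2}(u(y)-u(x)) \leq (u(y)-u(x))_+^{p-1}\leq w_+(y)^{p-1}$; combined with the upper bound $K\leq(1-s)\Lambda|x-y|^{-N-sp}$ from \eqref{boundsonKernel} and the $\esssup$ over $t$ pulled outside the $y$‑integral, this bounds the second block below by $-C$ times the last right‑hand term.

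The heart of the matter is the $B_R(x_0)\times B_R(x_0)$ block. Writing $a = u(x)-k$, $b = u(y)-k$, $\zeta_1=\zeta_1(x)$, $\zeta_2=\zeta_1(y)$, so that $\phi(x)-\phi(y) = \zeta_2(t)^p\,(a_+\zeta_1^p - b_+\zeta_2^p)$ with the time dependence factoring through $\zeta_2(t)^p$, the key step is the pointwise inequality
\[
|a-b|^{p-2}(a-b)\,(a_+\zeta_1^p - b_+\zeta_2^p)\ \geq\ \tfrac1C\,|a_+-b_+|^p\max\{\zeta_1,\zeta_2\}^p + \tfrac1C\big(a_+b_-^{p-1}\zeta_1^p + b_+a_-^{p-1}\zeta_2^p\big) - C\max\{a_+,b_+\}^p|\zeta_1-\zeta_2|^p,
\]
valid for all $a,b\in\RR$, $\zeta_1,\zeta_2\in[0,1]$, with $C = C(p)$. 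I would prove it by cases on the signs of $a,b$: if $a,b\leq0$ the left side is $0$; if $a,b\geq0$, writing $a\zeta_1^p-b\zeta_2^p = (a-b)\zeta_1^p+b(\zeta_1^p-\zeta_2^p)$ when $\zeta_1\geq\zeta_2$ and $a\zeta_1^p-b\zeta_2^p = a(\zeta_1^p-\zeta_2^p)+(a-b)\zeta_2^p$ when $\zeta_2>\zeta_1$, the cross terms are absorbed with $|\zeta_1^p-\zeta_2^p|\leq p\max\{\zeta_1,\zeta_2\}^{p-1}|\zeta_1-\zeta_2|$ and Young's inequality, the factor $|a-b|^{p-1}$ making the absorption possible; if $a>0>b$ then $u(x)-u(y) = a_++b_-$ and $(a_++b_-)^{p-1}\geq\max\{a_+,b_-\}^{p-1}\geq\tfrac12(a_+^{p-1}+b_-^{p-1})$ (a quantitative version of \cref{pineq1,pineq3}) produces $\tfrac12 a_+^p\zeta_1^p$ plus the good cross term $\tfrac12 a_+b_-^{p-1}\zeta_1^p$, after which the $\zeta_1$–$\zeta_2$ discrepancy is again absorbed by Young; the case $b>0>a$ is symmetric. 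Integrating this inequality against $K(x,y,t)$ over $B_R(x_0)\times B_R(x_0)$ and using \eqref{boundsonKernel} (the lower bound on the two nonnegative terms, the upper bound on the remainder), then restricting the first term to $B_r(x_0)\times B_r(x_0)$ where $\zeta_1\equiv1$, gives the Gagliardo term on the left; since $w_+(x)w_-(y)^{p-1}$ vanishes off $\{u(x)>k>u(y)\}$, a symmetrization in $x\leftrightarrow y$ identifies the cross term, up to a universal constant, with the ``good term'' on the left; and the remainder becomes the $|\zeta_1(x)-\zeta_1(y)|^p$ term on the right.

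Collecting the parabolic contribution, the tail block and the local block, absorbing all universal constants together with the factors $(1-s)$ and $\Lambda$ into a single $C = C(N,p,s,\Lambda)$, and letting $h\to0$ in the Steklov regularization, yields the claimed inequality. I expect the pointwise inequality in the local block to be the main obstacle: one must produce the ``good'' cross term $w_+(x)w_-(y)^{p-1}$ with a constant uniform over the singular range $1<p<2$ and the degenerate range $p\geq2$, and at the same time keep the full cutoff $\max\{\zeta_1,\zeta_2\}^p$ on the energy term while admitting only an error of the form $\max\{w_+(x),w_+(y)\}^p|\zeta_1-\zeta_2|^p$ — it is precisely this requirement that forces the algebraic splitting to be chosen according to the sign of $\zeta_1-\zeta_2$. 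The Steklov step, while routine, also needs some care when passing to the limit in the nonlocal and tail integrals.
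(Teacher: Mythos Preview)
Your proposal is correct and follows essentially the same route as the paper: test with $w_\pm\zeta^p$ (justified via Steklov averages, for which the paper simply cites \cite{dingLocalBoundednessHolder2021}), split the nonlocal term into a local block over $B_R\times B_R$ and a tail block, and in the local block argue case by case on the signs of $a=u(x)-k$ and $b=u(y)-k$. Your three cases match the paper's exactly (both outside $A_+$; one in, one out; both in), and your handling of each is the same in spirit---the paper invokes \cref{pineq1} (resp.\ Jensen) in the mixed-sign case and \cref{pineq3} in the both-positive case, while you unpack those algebraic lemmas directly, including the splitting according to the sign of $\zeta_1(x)-\zeta_1(y)$. One minor difference: in the tail block you simply discard the nonnegative contribution from $\{u(x)\geq u(y)\}$, whereas the paper isolates it as a separate positive term $T_1$ and then explicitly ignores it; the effect is identical.
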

\begin{proof}
All the heavy-lifting pertaining to time regularization has already been performed in \cite[Lemma 3.3]{dingLocalBoundednessHolder2021}. In fact, the proof of our theorem is the same as their proof except for their estimate for the terms $I_2$ and $I_3$. For this reason and in the interest of brevity, we only present below the part of their proof with the subheadings ``{\bf The estimate of $I_2$}'' and ``{\bf The estimate of $I_3$}'' with different estimates that produce the ``good term''. To this end, we follow \cite[Prop. 8.5]{cozziRegularityResultsHarnack2017}. Moreover, we only write the calculation for sub-solutions since those for super-solutions are similar and we assume without loss of generality that $(x_0,t_0)=(0,0)$. Let $\rho_1, \rho_2$ be positive numbers such that $0<r\leq \rho_1<\rho_2\leq R$. We take $\zeta_1$ such that $0\leq \zeta_1\leq 1$,  $\spt(\zeta_1)=B_{\frac{\rho_1+\rho_2}{2}}:=B_{\frac{\rho_1+\rho_2}{2}}(0)$, $\zeta_1 \equiv 1$ on $B_{\rho_1}$ and $|\nabla \zeta_1| \leq \frac{1}{\rho_2-\rho_1}$. Moreover, let  $\tau_1,\tau_2 \in (0,\infty)$ be given such that $-\tht\leq-\tau_2<-\tau_1\leq 0$ and $d\mu:= \frac{dx\,dy}{|x-y|^{N+sp}}$.

\begin{description}
	\item[The estimate of $I_2$:] 

Recall that
\begin{align*}
    I_2:=\frac{1}{2}\int_{-\tau_2}^{-\tau_1} \iint_{B_{\rho_2}\times B_{\rho_2}}|u(x,t)-u(y,t)|^{p-2}(u(x,t)-u(y,t))(w_+(x,t)\zeta_1^p(x)-w_+(y,t)\zeta_1^p(y))\zeta_2^p(t)\,d\mu\,dt.
\end{align*}

To estimate the nonlocal terms, we consider the following cases pointwise for $(x,t)$ and $(y,t)$ in ${B_{\rho_2}} \times {B_{\rho_2}}\times I$ where $I=(-\tau_2,-\tau_1)$. Recalling the notation from \descref{N10}{N10}, for any fixed $t\in I$, we claim that
    \begin{itemize}
        \item If $x \notin A_+(k,\rho_2,t)$ and $y \notin A_+(k,\rho_2,t)$ then
        \begin{equation}\label{eq:A}
            |u(x,t)-u(y,t)|^{p-2}(u(x,t)-u(y,t))(w_+(x,t)\zeta_1^p(x)-w_+(y,t)\zeta_1^p(y))=0.
        \end{equation}
    This estimate follows from the fact that $(w_+(x,t)\zeta_1^p(x)-w_+(y,t)\zeta_1^p(y))=0$ when $x \notin A_+(k,t)$ and $y \notin A_+(k,t)$. 
        \item If $x \in A_+(k,\rho_2,t)$ and $ y \notin A_+(k,\rho_2,t)$ then
        \begin{multline}\label{eq:B}
            |u(x,t)-u(y,t)|^{p-2}(u(x,t)-u(y,t))(w_+(x,t)\zeta_1^p(x)-w_+(y,t)\zeta_1^p(y))\\
            \geq\min\{2^{p-2},1\}\left[|w_+(x,t)-w_+(y,t)|^p+w_-(y,t)^{p-1}w_+(x,t)\right]\zeta_1(x)^p.
        \end{multline}
    
    To obtain estimate \cref{eq:B}, we note that 
    \begin{multline*}
    	|u(x,t)-u(y,t)|^{p-2}(u(x,t)-u(y,t))(w_+(x,t)\zeta_1^p(x)-w_+(y,t)\zeta_1^p(y))\\
    	=(w_+(x,t)+w_-(y,t))^{p-1}w_+(x,t)\zeta_1^p(x),
    \end{multline*} when $x \in A_+(k,\rho_2,t)$ and $y \notin A_+(k,\rho_2,t)$. 
The estimate follows by an application of  Jensen's inequality when $p \leq 2$ and applying \cref{pineq1} with $\theta=0$ in the case $p \geq 2$ .
        \item If $x, y \in A_+(k,\rho_2,t)$ then
        \begin{align}\label{eq:C}
            |u(x,t)-u(y,t)|^{p-2}&(u(x,t)-u(y,t))(w_+(x,t)\zeta_1^p(x)-w_+(y,t)\zeta_1^p(y))\nonumber\\
            & \geq \frac{1}{2}|w_+(x,t)-w_+(y,t)|^p\max\{\zeta_1(x),\zeta_1(y)\}^p\nonumber\\
            & \qquad -C(p)\max\{w_+(x,t),w_+(y,t)\}^p|\zeta_1(x)-\zeta(y)|^p.
        \end{align}
    
    Since $x,y \in A_+(k,\rho_2,t)$, we have 
    \begin{multline*}
    	|u(x,t)-u(y,t)|^{p-2}(u(x,t)-u(y,t))(w_+(x,t)\zeta_1^p(x)-w_+(y,t)\zeta_1^p(y))\\
    	=\left\{ \begin{array}{ll}
    		(w_+(x,t)-w_+(y,t))^{p-1}(\zeta_1^p(x)w_+(x,t)-\zeta_1^p(y)w_+(y,t)) & \text{when} \, u(x,t) \geq u(y,t),\\
    		(w_+(y,t)-w_+(x,t))^{p-1}(\zeta_1^p(y)w_+(y,t)-\zeta_1^p(x)w_+(x,t)) & \text{when} \, u(x,t) \leq u(y,t).
    	\end{array}\right.
    \end{multline*}
    Thus, without loss of generality, we can assume {$u(x,t) \geq u(y,t)$}.  Thus, we  get
    \begin{multline*}
    	|u(x,t)-u(y,t)|^{p-2}(u(x,t)-u(y,t))(w_+(x,t)\zeta_1^p(x)-w_+(y,t)\zeta_1^p(y))\\
    	=\left\{\hspace*{-6pt}\begin{array}{ll}
    		(w_+(x,t)-w_+(y,t))^{p}\zeta_1^p(x) & \text{if} \, \zeta_1(x) \geq \zeta_1(y), \\
    		(w_+(x,t)-w_+(y,t))^{p}(\zeta_1(y))^p-(w_+(x,t)-w_+(y,t))^{p-1}w_+(x,t)(\zeta_1^p(x)-\zeta_1^p(y)) & \text{if} \, \zeta_1(x) \leq \zeta_1(y).
    	\end{array}\right.
    \end{multline*} 
    In the second case, we apply \cref{pineq3} with $a=\zeta_1(y)$ and $b=\zeta_1(x)$ and 
    $\ve=\tfrac{1}{2}\tfrac{w_+(x,t)-w_+(y,t)}{w_+(x,t)}$  to obtain
    \begin{multline*}
    	(w_+(x,t)-w_+(y,t))^{p-1}w_+(x,t)(\zeta_1^p(x)-\zeta_1^p(y))\\
    	\leq \frac{1}{2}(w_+(x,t)-w_+(y,t))^p\zeta_1^p(y)+[2(p-1)]^{p-1}w_+^p(x,t)(\zeta_1(y)-\zeta(x))^p.
    \end{multline*}
    Combining the previous three estimates,  we obtain \cref{eq:C} when {$u(x,t) \geq u(y,t)$}.  The case {$u(x,t) \leq u(y,t)$}  can be handled analogously by interchanging the role of $x$ and $y$
    \end{itemize}

    As a consequence of these cases and \cref{boundsonKernel}, we have 
    \begin{align*}
    I_2\geq& C_{11}\Biggr[\int_I\iint_{B_{\rho_2}\times B_{\rho_2}} \frac{|w_+(x,t)-w_+(y,t)|^p}{|x-y|^{N+ps}}\max\{\zeta_1(x),\zeta_1(y)\}^p\zeta_2^p(t)\,dx\,dy\,dt \nonumber\\
    &\qquad+\int_I\int_{B_{\rho_2}}\zeta_2^p(t)\zeta_1^p(x)w_+(x,t)\left(\int_{B_{\rho_2}}\frac{w_-(y,t)^{p-1}}{|x-y|^{N+sp}}\,dy\right)\,dx\,dt\Biggr]\\
    &\qquad\qquad-C_{22}\int_I\iint_{B_{\rho_2}\times B_{\rho_2}}\max\{w_+(x,t),w_+(y,t)\}^p\frac{|\zeta_1(x)-\zeta_1(y)|^p}{|x-y|^{N+sp}}\zeta_2^p(t)\,dx\,dy\,dt.
\end{align*}

\item[The estimate of $I_3$:] Recall that
\begin{align}\label{caccest3}
    I_3:&=\int_{-\tau_2}^{-\tau_1} \iint_{(\RR^N\setminus B_{\rho_2})\times B_{\rho_2}}|u(x,t)-u(y,t)|^{p-2}(u(x,t)-u(y,t))(w_+(x,t)\zeta_1^p(x)-w_+(y,t)\zeta_1^p(y))\zeta_2^p(t)\,d\mu\,dt\nonumber\\
    &=\int_{-\tau_2}^{-\tau_1}\int_{B_{\rho_2}}\zeta_2^p(t)\zeta_1^p(x)w_+(x,t)\left[\int_{\RR^N\setminus B_{\rho_2}}{|u(x,t)-u(y,t)|^{p-2}(u(x,t)-u(y,t))}K(x,y,t)\,dy\right]\,dx\,dt\nonumber\\
    &\geq C_{33}\underbrace{\int_{-\tau_2}^{-\tau_1} \int_{A^+(k,\rho_1,t)}\zeta_2^p(t)\zeta_1^p(x)w_+(x,t)\left[\int_{(B_R(x)\setminus B_{\rho_2}) \cap \{u(x,t)\geq u(y,t)\}} \frac{(u(x,t)-u(y,t))^{p-1}}{|x-y|^{N+sp}}\,dy\right]\,dx\,dt}_{T_1}\nonumber\\
    &\qquad - C_{44}\underbrace{\int_{-\tau_2}^{-\tau_1} \int_{A^+\left(k,\frac{\rho_1+\rho_2}{2},t\right)}\zeta_2^p(t)\zeta_1^p(x)w_+(x,t)\left[\int_{(\RR^N\setminus B_{\rho_2}) \cap \{u(y,t)> u(x,t)\}} \frac{(u(y,t)-u(x,t))^{p-1}}{|x-y|^{N+sp}}\,dy\right]\,dx\,dt}_{T_2}.
\end{align}
We estimate $T_1$ as follows:
\begin{align}\label{caccest4}
    T_1&\geq \int_{-\tau_2}^{-\tau_1}\int_{B_{\rho_1}}\zeta_2^p(t)\zeta_1^p(x)w_+(x,t)\left[\int_{(B_R(x)\cap A^-(k,t))\setminus B_{\rho_2}} \frac{(w_+(x,t)+w_-(y,t))^{p-1}}{|x-y|^{N+sp}}\,dy\right]\,dx\,dt\nonumber\\
    &\geq \int_{-\tau_2}^{-\tau_1}\int_{B_{\rho_1}}\zeta_2^p(t)\zeta_1^p(x)w_+(x,t)\left[\int_{B_R(x)\setminus B_{\rho_2}} \frac{w_-(y,t)^{p-1}}{|x-y|^{N+sp}}\,dy\right]\,dx\,dt.
\end{align}
We estimate $T_2$ as follows:
\begin{align}\label{caccest5}
    T_2&\leq \int_{-\tau_2}^0 \int_{B_{\frac{\rho_1+\rho_2}{2}}}\zeta_2^p(t)\zeta_1^p(x)w_+(x,t)\left[\int_{{\RR^N\setminus B_{\rho_2}}} \frac{w_+(y,t)^{p-1}}{|x-y|^{N+sp}}\,dy\right]\,dx\,dt\nonumber\\
    &\leq \iint_{I\times B_R}\zeta_2^p(t)\zeta_1^p(x)w_+(x,t)\lbr \esssup\limits_{t\in (-\tau_2,0)}\left[\int_{{\RR^N\setminus B_{\rho_2}}} \frac{w_+(y,t)^{p-1}}{|x-y|^{N+sp}}\,dy\right]\rbr\,dx\,dt.
\end{align}

Combining this observation with \cref{caccest4} and \cref{caccest5} into \cref{caccest3}, we obtain
\begin{align*}
    I_3\geq C_{33}&\underbrace{\int_{I}\int_{B_{\rho_1}}\zeta_2^p(t)\zeta_1^p(x)w_+(x,t)\left[\int_{B_R(x)\setminus B_{\rho_2}} \frac{w_+(y,t)^{p-1}}{|x-y|^{N+sp}}\,dy\right]\,dx\,dt}_{\text{ignore this term which appears on the left hand side}}\\
    &-C_{44} \iint_{I\times B_R}\zeta_2^p(t)\zeta_1^p(x)w_+(x,t) \lbr \esssup\limits_{t\in (-\tau_2,0)}\int_{{\RR^N\setminus B_{\rho_2}}} \frac{w_+(y,t)^{p-1}}{|x-y|^{N+sp}}\,dy \rbr\,dx\,dt.
\end{align*}
\end{description}
As mentioned earlier, the rest of the proof of \cref{energyest} is the same as in \cite[Lemma 3.3]{dingLocalBoundednessHolder2021}.
\end{proof}

\begin{remark}
This energy estimate first appears in \cite{prasadLocalBoundednessVariational2021} under the assumption that $\partial_t u\in L^2(E_T)$ (see Lemma 3.1 and Remark 3.2 in \cite{prasadLocalBoundednessVariational2021}). This assumption is generally dropped by working with regularizations in time, such as Steklov averages \cite{dibenedettoDegenerateParabolicEquations1993}. In fact, the estimate in \cite{prasadLocalBoundednessVariational2021} is proved for parabolic minimizers, however the same method works for solutions of equations.
\end{remark}
Henceforth, excepting the final section, we will let $u$ denote a nonnegative, locally bounded supersolution.

\subsection{Shrinking Lemma}

One of the main difficulties we face when dealing with regularity issues for nonlocal equations is the lack of a corresponding isoperimetric inequality for $W^{s,p}$ functions. Indeed, since such functions can have jumps, a generic isoperimetric inequality seems out of reach at this time (see \cite{cozziFractionalGiorgiClasses2019, adimurthiOlderRegularityFractional2022}). One way around this issue is to note that since we are working with solutions of an equation, which we expect to be continuous and hence have no jumps, we could try and cook up an isoperimetric inequality for solutions. Such a strategy turns out to be feasible due to the presence of the ``good term'' or the isoperimetric term in the Caccioppoli inequality. 

\begin{lemma}\label{lem:isop}
Let $k<l<m$ be arbitrary levels and $A \geq 1$. Then,
\[
(l-k)(m-l)^{p-1}\abs{[u>m]\cap B_{\rho}}\abs{[u<k]\cap B_{\rho}} \leq 
C\rho^{N+sp}\int_{B_{\rho}} (u-l)_{-}(x)\int_{B_{A\rho}}\frac{(u-l)_{+}^{p-1}(y)}{|x-y|^{N+sp}}\,dy\,dx,
\]
where $C = C(N,s,p,A)>0$. 
\end{lemma}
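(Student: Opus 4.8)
The plan is to prove this directly as a measure‑theoretic inequality — it uses nothing about the equation, only the pointwise values of $u$. The idea is to restrict the double integral on the right‑hand side to the product of the two level sets $[u<k]\cap B_{\rho}$ and $[u>m]\cap B_{\rho}$, and then exploit the sign information on $u$ there.

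First I would set $S_- := [u<k]\cap B_{\rho}$ and $S_+ := [u>m]\cap B_{\rho}$, and note that since the integrand on the right is nonnegative I may restrict the outer integral to $S_-$ and the inner integral to $S_+$; the latter restriction is legitimate because $A\geq 1$ gives $S_+ \subseteq B_{\rho} \subseteq B_{A\rho}$. Next, on $S_-$ we have $u(x) < k < l$, so $(u-l)_-(x) = l - u(x) > l-k$; on $S_+$ we have $u(y) > m > l$, so $(u-l)_+(y) = u(y)-l > m-l$, and since $p-1>0$ also $(u-l)_+^{p-1}(y) > (m-l)^{p-1}$. Finally, for $x,y\in B_{\rho}$ the triangle inequality gives $|x-y| < 2\rho$, hence $|x-y|^{-(N+sp)} > (2\rho)^{-(N+sp)}$. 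Combining these three bounds,
\[
\int_{B_{\rho}}(u-l)_-(x)\int_{B_{A\rho}}\frac{(u-l)_+^{p-1}(y)}{|x-y|^{N+sp}}\,dy\,dx \;\geq\; \frac{(l-k)(m-l)^{p-1}}{(2\rho)^{N+sp}}\,\abs{S_-}\,\abs{S_+},
\]
and multiplying through by $(2\rho)^{N+sp}$ gives the claimed inequality with $C = 2^{N+sp}$ — in fact independent of $A$.

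I do not expect any genuine obstacle here: the only thing to be careful about is the bookkeeping with the positive and negative parts of $u-l$ and the inclusion $S_+ \subseteq B_{A\rho}$. The real significance of the lemma lies in how it is used afterwards: its right‑hand side is exactly the ``good term'' that appears on the left‑hand side of the energy estimate \cref{energyest}, and the two will be paired to play the role of the (unavailable) De Giorgi isoperimetric inequality in the shrinking argument.
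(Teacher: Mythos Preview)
Your proof is correct and follows essentially the same approach as the paper's own proof: restrict the double integral to the product of the level sets $[u<k]\cap B_\rho$ and $[u>m]\cap B_\rho$, bound the kernel below using $|x-y|<2\rho$, and use the obvious pointwise bounds on $(u-l)_-$ and $(u-l)_+^{p-1}$. Your observation that the resulting constant $C=2^{N+sp}$ is actually independent of $A$ is also correct, even though the paper records the dependence $C=C(N,s,p,A)$.
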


\begin{proof}
We estimate from below
\begin{align*}
    \int_{B_{\rho}} (u-l)_{-}(x)\int_{B_{A\rho}}\frac{(u-l)_{+}^{p-1}(y)}{|x-y|^{N+sp}}\,dy\,dx &\geq \frac{C}{\rho^{N+sp}}\int_{B_{\rho}} (u-l)_{-}(x)\int_{B_{\rho}}(u-l)_{+}^{p-1}(y)\,dy\,dx
    \\
    &\geq \frac{C}{\rho^{N+sp}}\int_{B_{\rho}\cap \{u<k\}} (u-l)_{-}(x)\int_{B_{\rho}\cap \{u>m\}}(u-l)_{+}^{p-1}(y)\,dy\,dx
    \\
    &\geq \frac{C}{\rho^{N+sp}}\int_{B_{\rho}\cap \{u<k\}} (l-k)\int_{B_{\rho}\cap \{u>m\}}(m-l)^{p-1}\,dy\,dx.
\end{align*}
\end{proof}

We now prove the shrinking lemma using \cref{lem:isop}. It is usually proved in the local case using the De Giorgi isoperimetric inequality.
\begin{lemma}\label{lem:shrinking}
Let $u$ be a super-solution of \cref{maineq}. Suppose that for some level $m$, some constant $\nu \in (0,1)$ and all time levels $\tau$ in some interval $J$ we have
\[
|[u(\cdot,\tau)>m]\cap B_{\rho}| \geq \nu|B_{\rho}|,
\]
and we can arrange that for some $A \geq 1$,  the following is also satisfied:
\begin{align}\label{smallness}
\int_J\int_{B_{\rho}} (u-l)_{-}(x,t)\int_{B_{A\rho}(x)}\frac{(u-l)_{+}^{p-1}(y,t)}{|x-y|^{N+sp}}\,dy\,dx\,dt \leq C_1\frac{l^p}{\rho^{sp}}|B_{\rho} \times J|,
\end{align}
where $l = \frac{m}{2^{j}}$ for some $j\geq 1$, 
then we have the following conclusion:
\[
\left|\left[u<\frac{m}{2^{j+1}}\right]\cap B_{\rho} \times J\right| \leq \left(\frac{C_{(C_1,A,N,\nu)}}{2^{j}-1}\right)^{p-1}|B_{\rho} \times J|.
\]
\end{lemma}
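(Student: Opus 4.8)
The plan is to run the classical De Giorgi ``shrinking'' iteration, but with \cref{lem:isop} playing the role normally occupied by the De Giorgi isoperimetric inequality. Fix a time level $\tau \in J$ and set $l_i = \frac{m}{2^i}$ for $i = j, j+1$; more generally I would consider the levels $\frac{m}{2^i}$ for $j \leq i \leq j+1$ but in fact only two consecutive levels are needed here. Apply \cref{lem:isop} at the fixed time $\tau$ with the choice $k = \frac{m}{2^{j+1}}$, $l = \frac{m}{2^j}$, $m_{\text{there}} = \frac{m}{2^{j-1}}$ (so that $k < l < m_{\text{there}}$), and $A$ as in the hypothesis. Since $l - k = \frac{m}{2^{j+1}}$ and $m_{\text{there}} - l = \frac{m}{2^j}$, this yields
\[
\frac{m^p}{2^{(j+1)(p)}}\,\bigl|[u(\cdot,\tau) > \tfrac{m}{2^{j-1}}] \cap B_\rho\bigr|\;\bigl|[u(\cdot,\tau) < \tfrac{m}{2^{j+1}}] \cap B_\rho\bigr| \leq C\rho^{N+sp}\int_{B_\rho}(u-l)_-(x,\tau)\int_{B_{A\rho}(x)}\frac{(u-l)_+^{p-1}(y,\tau)}{|x-y|^{N+sp}}\,dy\,dx.
\]
Now use the measure hypothesis $|[u(\cdot,\tau) > m] \cap B_\rho| \geq \nu|B_\rho|$, which a fortiori gives $|[u(\cdot,\tau) > \tfrac{m}{2^{j-1}}] \cap B_\rho| \geq \nu|B_\rho|$ since $\tfrac{m}{2^{j-1}} < m$; this bounds the first measure factor from below by $\nu|B_\rho|$. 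Dividing through, we obtain a pointwise-in-$\tau$ bound
\[
\bigl|[u(\cdot,\tau) < \tfrac{m}{2^{j+1}}] \cap B_\rho\bigr| \leq \frac{C\,2^{(j+1)p}\,\rho^{N+sp}}{\nu\,m^p\,|B_\rho|}\int_{B_\rho}(u-l)_-(x,\tau)\int_{B_{A\rho}(x)}\frac{(u-l)_+^{p-1}(y,\tau)}{|x-y|^{N+sp}}\,dy\,dx.
\]

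Next I would integrate this inequality in $\tau$ over $J$ and invoke the smallness assumption \cref{smallness}, which controls exactly the space--time integral appearing on the right by $C_1\,l^p\rho^{-sp}|B_\rho \times J| = C_1\,\tfrac{m^p}{2^{jp}}\,\rho^{-sp}|B_\rho \times J|$. Substituting, the factors of $m^p$ cancel, $\rho^{N+sp}\cdot\rho^{-sp}/|B_\rho|$ is a dimensional constant, and the powers of $2$ combine as $2^{(j+1)p}/2^{jp} = 2^p$, leaving
\[
\bigl|[u < \tfrac{m}{2^{j+1}}] \cap B_\rho \times J\bigr| \leq \frac{C(C_1,A,N,\nu)}{2^{?}}\,|B_\rho \times J|,
\]
where a careful bookkeeping of exponents is needed to land on the stated $\bigl(\tfrac{C}{2^j - 1}\bigr)^{p-1}$. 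The natural way to produce the $(2^j-1)^{-(p-1)}$ decay rather than a mere constant is to be more careful with the $(u-l)_-$ factor: on the set $\{u < \tfrac{m}{2^{j+1}}\}$ one has $(u-l)_- = l - u \geq l - \tfrac{m}{2^{j+1}} = \tfrac{m}{2^{j+1}}(2^j - 1)\cdot\tfrac{1}{2^{j}}\cdot 2^j$, i.e. a lower bound proportional to $m(2^j-1)/2^{j+1}$, while on all of $B_\rho$ one has the crude upper bound $(u-l)_- \leq l = \tfrac{m}{2^j}$; the ratio of these is what converts an $L^1$ bound on $(u-l)_-$ into a measure estimate with the extra $(2^j-1)^{-1}$, and the power $p-1$ arises because the $(u-l)_+^{p-1}$ factor inside the tail integral is likewise being estimated against its own level. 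I would reorganize the two applications so that the gain is extracted cleanly: bound $(u-l)_+^{p-1}(y,\tau) \leq (\tfrac{m}{2^{j-1}} - l)^{p-1} = (\tfrac{m}{2^j})^{p-1}$ on the region where it is nonzero and $(u-l)_-(x,\tau) \geq \tfrac{m}{2^j} - \tfrac{m}{2^{j+1}}$ exactly on the target set, rather than applying \cref{lem:isop} as a black box.

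The main obstacle I anticipate is precisely this exponent bookkeeping: making the crude estimate \cref{lem:isop} yield the sharp $(2^j - 1)^{-(p-1)}$ decay rather than an $O(1)$ bound requires inserting the target-level truncation \emph{before} applying the isoperimetric-type inequality, essentially replaying the De Giorgi argument where one estimates $|[u < \tfrac{m}{2^{j+1}}]|$ using that $u$ must drop by a definite amount $\sim \tfrac{m}{2^{j+1}}$ across the ``annular'' level set $\{\tfrac{m}{2^{j+1}} < u < \tfrac{m}{2^j}\}$. Everything else — the cancellation of $m^p$, the dimensional constants, the role of $A$ — is routine. One point to watch: the hypothesis gives the measure lower bound for \emph{all} $\tau \in J$, which is what licenses the pointwise-in-$\tau$ division above before integrating; if it held only for a.e. $\tau$ the argument is unchanged, and if it held only in measure one would need an extra bad-set argument, but that is not the situation here.
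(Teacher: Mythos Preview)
Your overall strategy is the same as the paper's: apply \cref{lem:isop} at each fixed time, use the measure hypothesis to bound the factor $|[u>m_{\text{there}}]\cap B_\rho|$ from below, integrate over $J$, and invoke \cref{smallness}. The difficulty you encounter is self-inflicted: you choose the top level in \cref{lem:isop} to be $m_{\text{there}} = m/2^{j-1}$, which gives $m_{\text{there}} - l = m/2^j$ and hence $(l-k)(m_{\text{there}}-l)^{p-1} \sim m^p/2^{jp}$. This exactly matches the $l^p = m^p/2^{jp}$ coming from \cref{smallness}, so everything cancels and you get only an $O(1)$ bound, as you correctly diagnose.

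The fix is not to replay the isoperimetric argument by hand, and in particular your suggestion to bound $(u-l)_+^{p-1}(y,\tau) \leq (m/2^{j-1} - l)^{p-1}$ is wrong --- there is no a priori upper bound on $u$, so $(u-l)_+$ is not bounded above. The actual fix is simply to take the top level in \cref{lem:isop} to be the \emph{original} $m$ from the hypothesis, not $m/2^{j-1}$. Then $m - l = m(1 - 2^{-j}) = m(2^j-1)/2^j$, so
\[
(l-k)(m-l)^{p-1} = \frac{m}{2^{j+1}} \cdot m^{p-1}\Bigl(\frac{2^j-1}{2^j}\Bigr)^{p-1},
\]
and after integrating over $J$ and using \cref{smallness} the ratio $l^p / [(l-k)(m-l)^{p-1}]$ works out to $2/(2^j-1)^{p-1}$, which is exactly the claimed decay. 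The measure hypothesis $|[u(\cdot,\tau)>m]\cap B_\rho| \geq \nu|B_\rho|$ applies directly with this choice of top level, so nothing is lost. This is precisely what the paper does, in a single line.
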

\begin{proof}
In the conclusion of \cref{lem:isop} we put $k = \frac{m}{2^{j+1}}$ and $l = \frac{m}{2^{j}}$, use the hypothesis and then integrate over the time interval $J$ to get
\[
\frac{m}{2^{j+1}}m^{p-1}\left(\frac{2^j-1}{2^j}\right)^{p-1} \left|\left[u<\frac{m}{2^{j+1}}\right]\cap B_{\rho} \times J\right| \leq 
\frac{C}{\nu}\left(\frac{m}{2^j}\right)^{p}|B_{\rho} \times J|,
\]
where $C$ depends on $C_1,A$ and $N$. The conclusion follows after a simple rearrangement. 
\end{proof}

\begin{remark}
In applying the shrinking lemma in proving the expansion of positivity lemmas, in order to get the smallness condition \cref{smallness}, we shall need to impose the smallness condition on the Tail term - this is one of the ways in which the Tail alternatives enter the picture.
\end{remark}

\begin{remark}\label{rem:nonlocalA}
An intriguing aspect of our lemma is the fact that the dependence between the levels in the conclusion and how small we can make the corresponding level set is \textit{not} exponential; such a dependence is reflected in the best known lower bound for the H\"older exponent for solutions to second order linear nonlocal equations \cite{mosconiOptimalEllipticRegularity2018}, whereas, in the local case, this lower bound is much worse \cite{bombieriHarnackInequalityElliptic1972}.
\end{remark}


\subsection{Tail Estimates}\label{sec:tail}

In this section, we shall outline how the  estimate for the tail term is made and we shall refer to this section whenever a similar calculation is required in subsequent sections.

Given a function $ \zeta \in C_c^{\infty}(B_{\rho})$ and  any level $k$, we want to estimate
\[
\underset{\stackrel{t \in J}{x\in \spt\zeta}}{\esssup}\int_{B_{\rho}^c(y_0)}\frac{(u-k)_{-}^{p-1}(y,t)}{|x-y|^{N+sp}}\,dy ,
\]
where $J$ is some time interval.  In order to do this, we will typically choose $\zeta$ to be supported in $B_{\vartheta\rho}$ for some $\vartheta \in (0,1)$. With such a choice, we have
\[
|y-y_0| \leq |x-y|\left(1+\frac{|x-y_0|}{|x-y|}\right)\leq  |x-y|\left(1+\frac{\vartheta}{(1-\vartheta)}\right),
\]
so we can make a first estimate
\[
\underset{\stackrel{t \in J}{ x\in \spt\zeta}}{\esssup}\int_{B_{\rho}^c(y_0)}\frac{(u-k)_{-}^{p-1}(y,t)}{|x-y|^{N+sp}}\,dy 
\leq \frac{1}{(1-\vartheta)^{N+sp}}\underset{t \in J}{\esssup}\int_{B_{\rho}^c(y_0)}\frac{(u-k)_{-}^{p-1}(y,t)}{|y-y_0|^{N+sp}}\,dy. 
\]
In the local case, we could always estimate $(u-k)_{-} \leq k$ because we take $u \geq 0$ locally. However, in the Tail term we have no information regarding the solution outside the ball and so unless we make a global boundedness assumption (for e.g. $u \geq 0$ in full space), the best we can do is
\[
(u-k)_{-} \leq u_{-} + k,
\]
which leads us to the next estimate
\[
\underset{t \in J}{\esssup}\int_{B_{\rho}^c(y_0)}\frac{(u-k)_{-}^{p-1}(y,t)}{|y-y_0|^{N+sp}}\,dy  \leq C(p)\frac{k^{p-1}}{\rho^{sp}} +  C(p)\,\underset{t \in J}{\esssup}\int_{B_{\rho}^c(y_0)}\frac{u_{-}^{p-1}(y,t)}{|y-y_0|^{N+sp}}\,dy.
\]
Putting together the above estimates yield
\[
\underset{\stackrel{t \in J}{ x\in \spt\zeta}}{\esssup}\int_{B_{\rho}^c(y_0)}\frac{(u-k)_{-}^{p-1}(y,t)}{|x-y|^{N+sp}}\,dy  \leq \frac{C(p)}{\rho^{sp}}\frac{1}{(1-\vartheta)^{N+sp}}\left[k^{p-1}+\tll_{\infty}^{p-1}(u_{-};y_0,\rho,J)\right].
\]
Finally we usually want an estimate of the form
\[
\underset{\stackrel{t \in J}{ x\in \spt\zeta}}{\esssup}\int_{B_{\rho}^c(y_0)}\frac{(u-k)_{-}^{p-1}(y,t)}{|x-y|^{N+sp}}\,dy \leq \frac{C(p)}{(1-\vartheta)^{N+sp}}\frac{k^{p-1}}{\rho^{sp}},
\]
and so we impose the condition that
\[
\tll_{\infty}^{p-1}(u_{-};y_0,\rho,J) \leq k^{p-1}.
\]
This is the origin of the various Tail alternatives. 

\begin{remark}
We will see that it is the Tail that captures the nonlocality of our equation in the sense that if it is small then the proofs become local. In fact, when dealing with nonnegative supersolutions, we only need the Tail of the negative part of the solution to be small and so if we assume that it is nonnegative in full space then the Tail alternatives are automatically verified and the  proofs of the constituent lemmas become ``local proofs''.
\end{remark}
%
\subsection{De Giorgi iteration Lemma} 
Let 
\[
\mu_{-} \leq \underset{(x_0,t_0)+Q_{2\rho}^{-}(\theta)}{\essinf} u
\]
and $M > 0$. Let $\xi \in (0,1]$ and $a\in (0,1)$ be fixed numbers. 
\begin{lemma}\label{lem:deG}
Let $u$ be a locally bounded, local, weak supersolution of \cref{maineq} in $E_T$. Then there exists a number $\nu_{-}$ depending only on the data and the parameters $\theta,\xi,M$ and $a$ such that if the following is satisfied
\[
|[u\leq \mu_{-} + \xi M]\cap (x_0,t_0)+Q_{2\rho}^{-}(\theta)| \leq \nu_{-}|Q_{2\rho}^{-}(\theta)|,
\]
then one of the following two conclusion holds: 
\[
\tail((u-\mu)_{-};x_0,\rho,(t_0-\theta (2\rho)^{sp},t_0]) > \xi M,
\]
or
\[
u \geq \mu_{-} + a\xi M \text{  a.e. in  } [(x_0,t_0)+Q_{\rho}^{-}(\theta)].
\]
\end{lemma}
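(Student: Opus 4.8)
The plan is to run a De~Giorgi iteration adapted to the nonlocal equation, with the Caccioppoli inequality of \cref{energyest} as the basic energy input, the Sobolev-type inequality of \cref{fracpoin} supplying the gain in integrability, and the fast geometric convergence of \cref{geo_con} closing the scheme. I would argue as follows: assume the Tail alternative \emph{fails}, i.e.\ $\tail((u-\mu_-)_{-};x_0,\rho,(t_0-\theta(2\rho)^{sp},t_0])\leq\xi M$, and deduce that the second conclusion must hold once the threshold $\nu_-$ is taken small enough (depending on the data and on $\theta,\xi,M,a$).

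\textbf{Setup.} Put $k_j:=\mu_-+a\xi M+(1-a)\xi M\,2^{-j}$, so $k_0=\mu_-+\xi M$ and $k_j\downarrow\mu_-+a\xi M$; $\rho_j:=\rho(1+2^{-j})$, so $\rho_0=2\rho$ and $\rho_j\downarrow\rho$; $Q_j:=B_{\rho_j}(x_0)\times(t_0-\theta\rho_j^{sp},t_0]$, so $Q_0=(x_0,t_0)+Q_{2\rho}^-(\theta)$; and $A_j:=[u\leq k_j]\cap Q_j$, $Y_j:=|A_j|/|Q_j|$. The measure hypothesis is precisely $Y_0\leq\nu_-$. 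Write $w_j:=(u-k_j)_-=(k_j-u)_+$; since $\essinf_{Q_j}u\geq\essinf_{Q_0}u\geq\mu_-$ we have $0\leq w_j\leq k_j-\mu_-\leq\xi M$ on $Q_j$, and $w_j$ vanishes off $A_j$. Choose cutoffs $\zeta_j=\zeta_{1,j}(x)\zeta_{2,j}(t)$ with $\zeta_{1,j}\equiv1$ on $B_{\rho_{j+1}}$, $\spt\zeta_{1,j}\subset B_{(\rho_j+\rho_{j+1})/2}$, $|\nabla\zeta_{1,j}|\lesssim2^j/\rho$, and $\zeta_{2,j}\equiv1$ on $(t_0-\theta\rho_{j+1}^{sp},t_0]$, $\zeta_{2,j}(t_0-\theta\rho_j^{sp})=0$, $|\partial_t\zeta_{2,j}|\lesssim2^{jsp}/(\theta\rho^{sp})$.

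\textbf{Energy estimate and the Tail.} Apply \cref{energyest} to the supersolution $u$ with level $k_j$, radii $\rho_{j+1}<(\rho_j+\rho_{j+1})/2<\rho_j$, cutoff $\zeta_j$, $\tau_1=0$, $\tau_2=\theta\rho_j^{sp}$. The initial term vanishes by the choice of $\zeta_{2,j}$; the good term on the left is nonnegative and is dropped. On the right, the gradient term is $\lesssim(\xi M)^p\,2^{jsp}\rho^{-sp}|A_j|$ (using $|\zeta_{1,j}(x)-\zeta_{1,j}(y)|\leq\min\{1,(2^j/\rho)|x-y|\}$ and integrating $|x-y|^{-N-sp}$ over the set where $\max\{w_j(x),w_j(y)\}>0$); the time term is $\lesssim(\xi M)^2\,2^{jsp}(\theta\rho^{sp})^{-1}|A_j|$; and the Tail term is treated as in \cref{sec:tail}: since $w_j(y,t)\leq(k_j-\mu_-)+(u-\mu_-)_-(y,t)\leq\xi M+(u-\mu_-)_-(y,t)$, the failure of the Tail alternative together with $\rho_j\geq\rho$ yields $\esssup_t\int_{\RR^N\setminus B_{\rho_j}(x_0)}w_j^{p-1}(y,t)|x-y|^{-N-sp}\,dy\lesssim(\xi M)^{p-1}\rho^{-sp}$ for $x\in\spt\zeta_{1,j}$, so this term is $\lesssim(\xi M)^p\rho^{-sp}|A_j|$. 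Setting $\mathcal{A}:=(\xi M)^p+\theta^{-1}(\xi M)^2$, we conclude that $\esssup_t\int_{B_{\rho_{j+1}}}w_j^2\,dx$ and $\rho^{sp}[w_j]^p_{W^{s,p}(B_{\rho_{j+1}})}+\iint_{Q_{j+1}}w_j^p$ are both $\lesssim\mathcal{A}\,2^{jsp}\rho^{-sp}|A_j|$, the passage from the $\max$-cutoff seminorm of \cref{energyest} to the seminorm of $w_j\zeta_j$ costing only the already-controlled gradient term.

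\textbf{Iteration and conclusion.} Feed $f=w_j\zeta_j$ into \cref{fracpoin} on $B_{\rho_{j+1}}(x_0)\times(t_0-\theta\rho_{j+1}^{sp},t_0]$: the right-hand side is dominated by the energy bound, giving $\iint_{Q_{j+1}}(w_j\zeta_j)^{p(1+2s/N)}\lesssim\mathcal{A}^{1+sp/N}\,2^{jsp(1+sp/N)}|A_j|^{1+sp/N}\rho^{-sp(N+sp)/N}$. On $A_{j+1}$ one has $\zeta_j=1$ and $w_j=k_j-u\geq k_j-k_{j+1}=(1-a)\xi M\,2^{-(j+1)}$, so the left-hand side is $\geq\bigl((1-a)\xi M\,2^{-(j+1)}\bigr)^{p(1+2s/N)}|A_{j+1}|$. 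Dividing by $|Q_{j+1}|\simeq\theta\rho^{N+sp}$ and checking that the powers of $\rho$ cancel, one obtains a recursion $Y_{j+1}\leq C\,b^{j}\,Y_j^{1+sp/N}$ with $b$ a fixed power of $2$ and $C>1$ depending only on the data, $\theta$, $\xi$, $M$, $a$. By \cref{geo_con} with $\alpha=sp/N$, if $Y_0\leq\nu_-:=C^{-N/(sp)}b^{-(N/(sp))^2}$ then $Y_j\to0$; since $[u\leq\mu_-+a\xi M]\cap\bigl((x_0,t_0)+Q_{\rho}^-(\theta)\bigr)\subset A_j$ for all $j$, this forces $u\geq\mu_-+a\xi M$ a.e.\ in $(x_0,t_0)+Q_{\rho}^-(\theta)$, which is the claim. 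The hardest part is the nonlocal bookkeeping in the energy step---bounding $\iint\max\{w_j(x),w_j(y)\}^p|\zeta_{1,j}(x)-\zeta_{1,j}(y)|^p|x-y|^{-N-sp}$ by a clean multiple of $|A_j|$, absorbing the ``tail'' of the cutoff into a volume factor, and bridging from the $\max$-cutoff Gagliardo seminorm produced by \cref{energyest} to the seminorm of $w_j\zeta_j$ needed for \cref{fracpoin}; a secondary subtlety is the mismatch between the quadratic power of $\xi M$ in the time term and the $p$-th power in the other terms, which is harmless here only because $\theta,\xi,M$ are fixed and $\nu_-$ may depend on them---this is exactly where intrinsic scaling would be forced if one wanted a threshold independent of $M$.
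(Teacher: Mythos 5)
Your proposal is correct and follows essentially the same route as the paper: the same geometric sequences of levels $k_j=\mu_-+\xi_jM$ and radii, the same application of \cref{energyest} with the three terms (cutoff-gradient, time-derivative, and Tail, the last controlled exactly via the failure of the Tail alternative and the global bound $(u-k_j)_-\leq\xi M+(u-\mu_-)_-$), followed by \cref{fracpoin} and \cref{geo_con}. The only cosmetic difference is that you apply the parabolic Sobolev inequality directly to get the exponent $1+sp/N$ in the recursion, whereas the paper first interpolates with H\"older to land on $1+s/(N+2s)$; both close the iteration and yield the same qualitative $\nu_-$.
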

\begin{remark}
	An analogous result holds for subsolutions at the supremum. 
\end{remark}
\begin{proof}
Without loss of generality, we will assume that $(x_0,t_0) = (0,0)$ and for $n=0,1,2,\ldots$, we set
\begin{equation*}
    \rho_n = \rho + 2^{-n}\rho,  \qquad
    B_n  = B_{\rho_n} \quad \text{and} \quad
    Q_n = B_n\times(-\theta\rho_n^{sp},0].
\end{equation*}
We apply the energy estimates over $B_n$ and $Q_n$ to $ (u-b_n)_{-}$ for the levels
\begin{equation*}
    b_n = \mu_{-} + \xi_nM \qquad \text{ where } \qquad \xi_n =  a\xi + \frac{1-a}{2^n}\xi.
\end{equation*}
Let us define 
\[
\hat{\rho_n} := \frac{3\rho_n+\rho_{n+1}}{4}, \quad \tilde{\rho_n} := \frac{\rho_n+\rho_{n+1}}{2}, \quad \text{and} \quad \bar{\rho_n} := \frac{\rho_n+3\rho_{n+1}}{4},
\]
and denote $\hat{Q_n}:= B_{\hat{\rho_n}}\times (-\theta\hat\rho_n^{sp}, 0]$, $\tilde{Q_n}:= B_{\tilde{\rho_n}}\times (-\theta\tilde\rho_n^{sp}, 0]$ and $\bar{Q_n}:= B_{\bar{\rho_n}}\times (-\theta\bar\rho_n^{sp}, 0]$. 

We shall consider cut-off functions of the form  $\bar{\zeta_n}$ and $\zeta_n$ satisfying
\begin{equation}\label{cutoff_size}
	\begin{array}{llll}
		\bar{\zeta_n} \equiv 1 \text{ on } B_{n+1}, \quad \bar{\zeta_n} \in C_c^{\infty}(B_{\bar{\rho_n}}), \quad  |\nabla\bar{\zeta_n}| \apprle \frac{1}{\bar{\rho_n} - \rho_{n+1}} \approx \frac{2^n}{\rho} \quad \text{and} \quad  |\de_t\bar{\zeta_n}| \apprle \frac{1}{\theta(\bar\rho_n^{sp} - \rho_{n+1}^{sp})} \approx \frac{2^{nsp}}{\theta\rho^{sp}}, \\
				{\zeta_n} \equiv 1 \text{ on } B_{\tilde{\rho_n}}, \quad {\zeta_n} \in C_c^{\infty}(B_{\hat{\rho_n}}), \quad  |\nabla{\zeta_n}| \apprle \frac{1}{\hat{\rho_n} - \tilde{\rho_{n}}}\approx \frac{2^{n}}{\rho} \quad \text{and} \quad  |\de_t{\zeta_n}| \apprle \frac{1}{\theta(\hat\rho_n^{sp} - \tilde{\rho_{n}}^{sp})}\approx \frac{2^{nsp}}{\theta\rho^{sp}},
	\end{array}
\end{equation}
Let us also denote $A_n:= [u < b_n] \cap Q_n$, then we can apply \cref{fracpoin} to get
\begin{equation}\label{eq3.8}
	\begin{array}{rcl}
	\frac{(1-a)\xi M}{2^n} |A_{n+1}| & \leq & \iint_{Q_{n+1}} (u-b_{n})_- \ dz \leq  \iint_{\bar{Q_{n}}} (u-b_{n})_- \bar{\zeta_n} \ dz \\
	& \leq & \lbr \iint_{\tilde{Q_{n}}} \lbr (u-b_{n})_- \bar{\zeta_n}\rbr^{p\frac{N+2s}{N}} \ dz\rbr^{\frac{N}{p(N+2s)}} |A_n|^{\frac{p(N+2s)-N}{p(N+2s)}} \\
	& \apprle & \bigg[ \tilde\rho_n^{sp} \iiint_{\tilde{Q_n} \times B_{\tilde{\rho_n}}}\frac{|(u-b_{n})_- \bar{\zeta_n}(x,t) - (u-b_{n})_- \bar{\zeta_n}(y,t)|^p}{|x-y|^{N+sp}}\\
	&& +    \iint_{\tilde{Q_n}}((u-b_{n})_- \bar{\zeta_n})^p\bigg]^{\frac{N}{p(N+2s)}}\lbr \sup_{-\theta \tilde\rho_n^{sp}< t < 0}\fint_{B_{\tilde{\rho_n}}} ((u-b_{n})_- \bar{\zeta_n})^2 \ dz\rbr^{\frac{s}{N+2s}}|A_n|^{\frac{p(N+2s)-N}{p(N+2s)}}
		\end{array}
\end{equation}

From Young's inequality, we have
\begin{multline}\label{eq3.9}
	|(u-b_{n})_- \bar{\zeta_n}(x,t) - (u-b_{n})_- \bar{\zeta_n}(y,t)|^p \leq c |(u-b_{n})_- (x,t) - (u-b_{n})_- (y,t)|^p\bar{\zeta_n}^p(x,t) \\
	+ c |(u-b_{n})_-(y,t)|^p |\bar{\zeta_n}(x,t) - \bar{\zeta_n}(y,t)|^p
\end{multline}
Combining \cref{eq3.8} and \cref{eq3.9}, we get
\begin{equation}\label{eq3.10}
	\begin{array}{rcl}
	\frac{(1-a)\xi M}{2^n} |A_{n+1}| & \apprle & \bigg[ \tilde\rho_n^{sp} \iiint_{\tilde{Q_n} \times B_{\tilde{\rho_n}}}\hspace*{-0.5cm}\frac{|(u-b_{n})_- (x,t) - (u-b_{n})_- (y,t)|^p}{|x-y|^{N+sp}}\\
	&& + \tilde\rho_n^{sp} \iiint_{\tilde{Q_n} \times B_{\tilde{\rho_n}}}\hspace*{-0.5cm}\frac{|(u-b_{n})_-(y,t)|^p |\bar{\zeta_n}(x,t) - \bar{\zeta_n}(y,t)|^p}{|x-y|^{N+sp}}\\
	&&  +    \iint_{\tilde{Q_n}}((u-b_{n})_- )^p\bigg]^{\frac{N}{p(N+2s)}} \lbr \sup_{-\theta \tilde\rho_n^{sp}< t < 0}\fint_{B_{\tilde{\rho_n}}} (u-b_{n})_-^2 \ dz\rbr^{\frac{s}{N+2s}}|A_n|^{\frac{p(N+2s)-N}{p(N+2s)}}.
		\end{array}
\end{equation}


The energy inequality applied with the cutoff function $\zeta_n$ over $Q_n$ from \cref{energyest} gives
\begin{multline}\label{eq3.11}
	\underset{-\theta\tilde\rho_n^{sp} < t < 0}{\esssup}\int_{B_{\tilde{\rho_n}}}(u-b_n)_{-}^2\,dx  + {\iiint_{\tilde{Q_n} \times B_{\tilde{\rho_n}}}} \hspace*{-0.5cm}\frac{|(u-b_n)_{-}(x,t)(x,t)-(u-b_n)_{-}(y,t)|^p}{|x-y|^{N+sp}}\,dx \,dy\,dt 
	\\ 
	\leq \underbrace{\int_{-\theta\rho_n^{sp}}^0 \iint_{B_{\rho_n} \times B_{\rho_n}} \max\{(u-b_n)_-(x,t),(u-b_n)_-(y,t)\} \frac{|\zeta_n(x,t) - \zeta_n(y,t)|^p}{|x-y|^{N+sp}}\,dx\,dy\,dt}_{:=I}\\
	+ \underbrace{\int_{-\theta\rho_n^{sp}}^0 \int_{B_{\rho_n}} (u-b_n)_-^2 |\pa_t \zeta_n(x,t)| \,dx\,dt}_{:=II} \\
	+ \underbrace{\int_{-\theta\rho_n^{sp}}^0 \int_{B_{\rho_n}}(u-b_n)_-(x,t) \,dx\,dt \lbr \esssup_{\stackrel{-\theta\rho_n^{sp} < t < 0}{x \in \spt \zeta_n}} \int_{\RR^N \setminus B_{\rho_n}} \frac{(u-b_n)_-^{p-1}}{|x-y|^{N+sp}}\,dy\rbr}_{:=III}. 
\end{multline}
We now estimate each of the terms in the previous display as follows:
\begin{description}
	\item[Estimate for $I$:] Using the fact that $(u-b_n)_- \leq \xi M$ and \cref{cutoff_size}, we have
	\begin{equation}\label{eq3.13}
		I \leq C \frac{2^{np} (\xi M)^p}{\rho^{sp}} |A_n|.
	\end{equation}
		\item[Estimate for $II$:] Analogously, we also get
			\begin{equation}\label{eq3.14}
			II \leq C \frac{2^{nsp} (\xi M)^2}{\theta \rho^{sp}} |A_n|.
		\end{equation}
			\item[Estimate for $III$:] To estimate this, we note that if $x \in B_{\tilde{\rho_n}}$ and $y \in \RR^N \setminus B_n$, then we have 
			\[
			|y|\leq |x-y|\left(1+\frac{|x|}{|x-y|}\right) \leq c2^n|x-y| \ \Longrightarrow\  \frac{1}{|x-y|^{N+sp}} \leq c2^{n(N+sp)}\frac{1}{|y|^{N+sp}}.
			\]
			Next, we note that the following
			\[
			(u-b_n)_{-} \leq \xi_nM - (u - \mu_{-}) \leq \xi M + (u - \mu_{-})_{-},
			\]
			holds globally. Therefore we have
			\begin{equation}\label{eq315.15}\begin{array}{l}
				\underset{\stackrel{-\theta\rho_n^{sp} < t < 0;}{ x\in \spt\zeta_1}}{\esssup}\int_{\RR^N \setminus B_{\rho_n}}\frac{(u-b_n)_{-}^{p-1}(y,t)}{|x-y|^{N+sp}}\,dy\\
				 \hspace*{2cm}\apprle 2^{n(N+sp)}\underset{-\theta\rho_n^{sp} < t < 0}{\esssup}\int_{\RR^N \setminus B_{\rho_n}}\frac{(u - \mu_{-})_{-}^{p-1}(y,t)+(\xi M)^{p-1}}{|y|^{N+sp}}\,dy
				\\
				\hspace*{2cm}\apprle 2^{n(N+sp)}\frac{(\xi M)^{p-1}}{\rho^{sp}} +
				2^{n(N+sp)}\underset{-\theta\rho_n^{sp} < t < 0 }{\esssup}\int_{\RR^N \setminus B_{\rho_n}}\frac{(u - \mu_{-})_{-}^{p-1}(y,t)}{|y|^{N+sp}}\,dy
				\\
				\hspace*{2cm}\leq c2^{n(N+sp)}\left(\frac{(\xi M)^{p-1}}{\rho^{sp}} + \frac{1}{\rho^{sp}}\tll_{\infty}^{p-1}((u-\mu)_{-};0,\rho,(-\theta (2\rho)^{sp},0])\right),
				\end{array}
			\end{equation}
		In particular, we get
		\begin{equation}\label{eq3.15}
			\begin{array}{rcl}
			III &\apprle & 2^{n(N+sp)}\left(\frac{(\xi M)^{p-1}}{\rho^{sp}} + \frac{1}{\rho^{sp}}\tll_{\infty}^{p-1}((u-\mu)_{-};0,2\rho,(-\theta (2\rho)^{sp},0])\right)(\xi M)|A_n|
			\\			
			&\leq&  C 2^{n(N+sp)}\frac{(\xi M)^{p}}{\rho^{sp}}|A_n|, 
			\end{array}
		\end{equation}
	where to obtain the last estimate, we made use of the hypothesis $$\tail((u-\mu)_{-};x_0,\rho,(-\theta (2\rho)^{sp},0]) \leq  \xi M.$$
\end{description}

Combining \cref{eq3.13}, \cref{eq3.14} and \cref{eq3.15} into \cref{eq3.11}, we get
\begin{multline}\label{eq3.16}
	\underset{-\theta\tilde\rho_n^{sp} < t < 0}{\esssup}\int_{B_{\tilde{\rho_n}}}(u-b_n)_{-}^2\,dx  + {\iiint_{\tilde{Q_n} \times B_{\tilde{\rho_n}}}} \hspace*{-0.5cm}\frac{|(u-b_n)_{-}(x,t)(x,t)-(u-b_n)_{-}(y,t)|^p}{|x-y|^{N+sp}}\,dx \,dy\,dt 
	\\
	\leq  \underbrace{C\frac{2^{n(N+p)}}{\rho^{sp}}(\xi M)^{p}\left(2+\frac{1}{\theta}(\xi M)^{2-p}\right)|A_n|}_{:= G},
\end{multline}
where we have denoted $G:= \frac{\Gamma (\xi M)^p}{\rho^{sp}}|A_n|$ and $\Gamma:=C 2^{n(N+p)}\left(2+\frac{1}{\theta}(\xi M)^{2-p}\right)$.

Now we can make use of \cref{eq3.16} to further estimate the terms appearing on the right hand side of \cref{eq3.10} to get
\begin{equation}\label{eq3.17}
	\begin{array}{rcl}
		\frac{(1-a)\xi M}{2^n} |A_{n+1}| & \leq & C \left( \Gamma (\xi M)^p|A_n|\right)^{\frac{N}{p(N+2s)}} \lbr \frac{\Gamma(\xi M)^p}{\rho^{N+sp}}|A_n|\rbr^{\frac{s}{N+2s}}|A_n|^{\frac{p(N+2s)-N}{p(N+2s)}}.
	\end{array}
\end{equation}
We now divide \cref{eq3.17} by $|Q_{n+1}|$ noting that $|Q_n| \approx |Q_{n+1}|$ and denote $Y_n := \frac{|A_n|}{|Q_n|}$ from which we get
\[
Y_{n+1}\leq \frac{Cd^n}{(1-a)}\left(\frac{\theta}{(\xi M)^{2-p}}\right)^{\frac{s}{N+2s}}\left(2+\frac{(\xi M)^{2-p}}{\theta}\right)^{\frac{N+sp}{p(N+2s)}}Y_n^{1+\frac{s}{N+2s}},
\]
where $d = d(N,s,p,\La) > 1$ and $C = C(N,s,p,\La)>0$ depends only on the data. By \cref{geo_con}, we see that  $Y_{\infty} = 0$ provided
\begin{equation}
	\label{eq3.18}
Y_0 \leq \lbr\frac{C}{1-a}\rbr^{-\frac{N+2s}{s}}d^{-\left(\frac{N+2s}{s}\right)^2}\frac{\left(\frac{(\xi M)^{2-p}}{\theta}\right)}{\left(2+\frac{(\xi M)^{2-p}}{\theta}\right)^{\frac{N+sp}{sp}}} =: \nu_{-},
\end{equation}
which completes the proof of the lemma.
\end{proof}

\begin{remark}\label{remark3.10}
In the conclusion of \cref{lem:deG}, we can replace $\tail((u-\mu)_{-};x_0,\rho,(t_0-\theta (2\rho)^{sp},t_0]) > \xi M$ with $\tail((u-\mu)_{-};x_0,2\rho,(t_0-\theta (2\rho)^{sp},t_0]) > \xi M$. 

This requires a finer refinement for the estimate of $III$ appearing in \cref{eq3.11} which proceeds as follows:
\begin{align*}
	\underset{{-\theta\rho_n^{sp} < t < 0}}{\esssup}\int_{\RR^N \setminus B_{\rho_n}}\hspace*{-15pt}\frac{(u-b_n)_{-}^{p-1}(y,t)}{|y|^{N+sp}}\,dy = 	\underset{{-\theta\rho_n^{sp} < t < 0}}{\esssup}\lbr \int_{\RR^N \setminus B_{2\rho}}\frac{(u-b_n)_{-}^{p-1}(y,t)}{|y|^{N+sp}}\,dy  + 	\int_{B_{2\rho} \setminus B_{\rho_n}}\frac{(u-b_n)_{-}^{p-1}(y,t)}{|y|^{N+sp}}\,dy \rbr.
\end{align*}
The first term appearing on the right hand side of the previous display can be estimated analogously to \cref{eq315.15} in terms of  $\tail((u-\mu)_{-};x_0,2\rho,(t_0-\theta (2\rho)^{sp},t_0])$. 

In order to estimate the second term, we note that $(u-b_n)_- \leq (\xi M)$ on $B_{2\rho} \times (-\theta\rho_n^{sp},0]$ and $|y| \geq \rho_n \geq \rho$ on $B_{2\rho} \setminus B_{\rho_n}$ from which we get
\[
\underset{{-\theta\rho_n^{sp} < t < 0}}{\esssup}\int_{B_{2\rho} \setminus B_{\rho_n}}\frac{(u-b_n)_{-}^{p-1}(y,t)}{|y|^{N+sp}}\,dy  \leq  C\frac{(\xi M)^{p-1}}{\rho^{N+sp}} |B_{2\rho}\setminus B_{\rho_n}| \leq C\frac{(\xi M)^{p-1}}{\rho^{sp}}, 
\]
which implies \cref{eq3.15} holds for this term. 
\end{remark}

\subsection{De Giorgi lemma: Forward in time  version}
Let $u$ denote a nonnegative, local, weak supersolution of \cref{maineq} in $E_T$ and let $B_R\times I \subset E_T$ denote our reference cylinder. Suppose that we have the following information at a time level $t_0$:
\begin{equation}\label{eq:deGleminitdata}
u(x,t_0) \geq \xi M \qquad \text{ for a.e. } x \in B_{2\rho}(y),    
\end{equation}
for some $M>0$ and $\xi \in (0,1]$. Then in the energy inequality from \cref{energyest},  for any level $k \leq \xi M$ over $[(x_0,t_0)+Q_{2\rho}^{+}(\theta)]$, the first term on the right hand side of \cref{energyest}  over $B_{2\rho}\times\{t_0\}$ vanishes.

Moreover, taking a test function independent of time also kills the integral on the right involving the time derivative of the test function. Therefore we may repeat the same arguments as in \cref{lem:deG} for $(u-\xi_nM )_{-}$ over the cylinders $Q_n^+$ where
\[
\xi_n = a\xi + \frac{1-a}{2^n}\xi \quad \text{and} \quad  Q_n^+ = B_n \times (0,\theta(2\rho)^{sp}].
\]
Denoting $Y_n' = \frac{|[u<\xi_nM]\cap Q_n^+|}{|Q_n^+|}$, we see that if $\tail(u_{-};x_0,2\rho,(t_0,t_0+\theta (2\rho)^{sp}]) \leq \xi M$ is satisfied, then the following estimate holds:
\[
Y_{n+1}' \leq \frac{Cb^n}{(1-a)^{(N+2s)\frac{p}{N}}}\left(\frac{\theta}{(\xi M)^{2-p}}\right)^{\frac{sp}{N}}Y_n'^{1+\frac{sp}{N}}.
\]

Thus, by \cref{geo_con}, we have $Y_{\infty}' = 0$ if
\begin{equation}\label{eq:initdatadeG}
    Y_0' \leq \nu_0\left(\frac{(\xi M)^{2-p}}{\theta}\right)^s =: \tilde{\nu}.
\end{equation}
for a constant $\nu_0 \in (0,1)$ depending only on $a$ and data. The foregoing discussion leads to the following variant of the De Giorgi lemma.
\begin{lemma}\label{lem:deGdata}
Let $u$ denote a nonnegative, local, weak supersolution to \cref{maineq} in $E_T$. Let $M$ and $\xi$ be positive numbers and  suppose \cref{eq:deGleminitdata} holds at time $t=t_0$ and \cref{eq:initdatadeG} is satisfied for $\tilde{\nu} = \tilde{\nu}(\xi,M,\theta,a,N,\La)$ as determined.  Then one of the following two alternatives hold:
\[
\tail(u_{-};x_0,2\rho,(t_0,t_0+\theta (2\rho)^{sp}]) > \xi M \qquad \text{OR} \qquad u \geq a\xi M \,\, \text{ a.e. in } B_{\rho}(x_0)\times (t_0,t_0+\theta(2\rho)^{sp}].
\]
\end{lemma}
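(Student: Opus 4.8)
The plan is to carry out the De Giorgi iteration already outlined in the discussion preceding the statement; I describe its structure and the points that need to be checked. After a translation I may assume $(x_0,t_0)=(0,0)$, so that the target cylinder is $B_\rho\times(0,\theta(2\rho)^{sp}]$ and \cref{eq:deGleminitdata} reads $u(\cdot,0)\geq\xi M$ a.e.\ on $B_{2\rho}$. For $n\geq 0$ I set $\rho_n=\rho+2^{-n}\rho$ (so $\rho_0=2\rho$ and $\rho_n\downarrow\rho$), $B_n=B_{\rho_n}$, $Q_n^+=B_n\times(0,\theta(2\rho)^{sp}]$ — only the spatial radius shrinks — and levels $b_n=\xi_nM$ with $\xi_n=a\xi+(1-a)2^{-n}\xi\downarrow a\xi$. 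The core step is to apply the energy estimate \cref{energyest} to the truncations $(u-b_n)_-$ over $Q_n^+$, using a cutoff $\zeta_n(x,t)=\zeta_n(x)$ that is \emph{independent of time}, with $\zeta_n\equiv1$ on $B_{\rho_{n+1}}$, $\zeta_n\in C_c^\infty(B_n)$ and $|\nabla\zeta_n|\lesssim 2^n/\rho$, together with the intermediate radii used in the proof of \cref{lem:deG}.

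Two features simplify the computation relative to \cref{lem:deG}. Since $\zeta_n$ carries no time dependence, the term involving $|\partial_t\zeta_n|$ in \cref{energyest} drops out. And the initial-slice term $\int_{B_n}(u-b_n)_-^2\zeta_n^p(x,0)\,dx$ vanishes: \cref{eq:deGleminitdata} gives $u(\cdot,0)\geq\xi M\geq b_n$ on $B_{2\rho}\supset B_n$, hence $(u-b_n)_-(\cdot,0)\equiv0$ there. The remaining nonlocal term is treated exactly as in \cref{sec:tail}: using $(u-b_n)_-\leq\xi M+u_-$ globally and $|x-y|\gtrsim 2^{-n}|y|$ for $x\in B_{\rho_{n+1}}$, $y\notin B_n$, it is bounded by $c\,2^{n(N+sp)}\rho^{-sp}\big((\xi M)^{p-1}+\tail^{p-1}(u_-;0,2\rho,(0,\theta(2\rho)^{sp}])\big)|A_n|$, with $A_n=[u<b_n]\cap Q_n^+$; as in \cref{remark3.10}, the radius appearing here is $2\rho$ from the hypothesis, not a shrinking one. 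If the first alternative of the dichotomy fails — i.e.\ $\tail(u_-;0,2\rho,(0,\theta(2\rho)^{sp}])\leq\xi M$ — this reduces to $\lesssim 2^{n(N+sp)}\rho^{-sp}(\xi M)^{p-1}|A_n|$; otherwise there is nothing to prove.

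Finally I would feed the resulting bounds on $\esssup_t\int_{B_n}(u-b_n)_-^2$ and on the Gagliardo seminorm of $(u-b_n)_-\zeta_n$ (after the Young splitting as in \cref{eq3.9}) into \cref{fracpoin} applied over a slightly smaller cylinder, and combine them with the elementary estimate $\tfrac{(1-a)\xi M}{2^n}|A_{n+1}|\leq\iint_{Q_n^+}(u-b_n)_-$. Writing $Y_n'=|A_n|/|Q_n^+|$ and using $|Q_n^+|\approx|Q_{n+1}^+|$, this produces the recursion for $Y_n'$ stated in the discussion preceding this lemma, to which \cref{geo_con} applies: $Y_n'\to0$ as $n\to\infty$ under a smallness condition on $Y_0'$, and a short computation identifies this condition with \cref{eq:initdatadeG}. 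Since $Y_\infty'=0$ while $\xi_n\downarrow a\xi$ and $\rho_n\downarrow\rho$, it follows that $|[u<a\xi M]\cap B_\rho\times(0,\theta(2\rho)^{sp}]|=0$, which is the second alternative. I expect the one delicate point to be the exponent bookkeeping in this last step — tracking the precise powers of $\theta/(\xi M)^{2-p}$ and of $(1-a)$ through \cref{fracpoin} so that \cref{geo_con} delivers exactly the threshold $\tilde{\nu}$ of \cref{eq:initdatadeG} — together with keeping the tail estimate anchored at $B_{2\rho}$ as in \cref{remark3.10}; the rest is a routine repetition of the proof of \cref{lem:deG} with the two simplifications above.
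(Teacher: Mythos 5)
Your proposal is correct and follows the same route the paper takes: the paper's justification of this lemma is precisely the discussion preceding it, namely rerunning the iteration of \cref{lem:deG} on cylinders $Q_n^+$ with only the spatial radius shrinking, a time-independent cutoff (killing the $|\partial_t\zeta|$ term), the vanishing initial-slice term from \cref{eq:deGleminitdata}, and the tail anchored at $2\rho$ as in \cref{remark3.10}, leading to the recursion for $Y_n'$ and \cref{geo_con}. Your two flagged simplifications and the final bookkeeping match the paper's argument.
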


\section{Qualitative expansion of positivity in time}\label{exptime}
 In this section, we shall prove a general expansion of positivity estimate that will be used in \cref{sec6} and \cref{sec7}.
\begin{lemma}\label{expandintime}
Let $u$ denote a nonnegative, local, weak supersolution to \cref{maineq} in $E_T$. Assume that for some $(x_0,t_0) \in E_T$ and some $\rho>0$, $M > 0$ and $\al \in (0,1)$, the following hypothesis is satisfied: 
\[
|[u(\cdot,t_0) \geq M]\cap B_{\rho}(x_0)| \geq \alpha|B_{\rho}(x_0)|,
\]
then  there exists $\delta$ and $\epsilon$ in $(0,1)$ depending on $\{N,s,p,\La,\alpha\}$ such that either
\[
\tail(u_{-};x_0,\rho,(t_0,t_0+\delta\rho^{sp}M^{2-p})) > M,
\]
or
\[
|[u(\cdot,t) \geq \epsilon M]\cap B_{\rho}(x_0)| \geq \frac{1}{2}\alpha|B_{\rho}(x_0)|,
\]
holds for all $t \in (t_0,t_0+\delta\rho^{sp}M^{2-p}]$. 
\end{lemma}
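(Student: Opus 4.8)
The plan is to use the energy estimate of \cref{energyest} for the negative part of $u$ relative to a shrinking family of levels, combined with a logarithmic-type iteration in the level parameter, exactly as in the classical De Giorgi proof of expansion of positivity in time but adapted to the nonlocal setting via the ``good term''. First I would fix the cylinder $Q = B_{\rho}(x_0)\times (t_0, t_0 + \delta\rho^{sp}M^{2-p}]$ for a $\delta$ to be chosen, and consider the levels $k_j = M/2^j$ (or $k_j = \epsilon_j M$ with $\epsilon_j$ decreasing to some $\epsilon$), working with $w_j = (u-k_j)_{-}$. Applying \cref{energyest} to the supersolution $u$ on a slightly larger ball $B_{(1+\sigma)\rho}$ with a cutoff $\zeta$ equal to $1$ on $B_{\rho}$, and discarding the positive ``good term'' and the gradient term on the left, one controls $\esssup_t \int_{B_{\rho}} w_j^2\zeta^p\,dx$ by (i) the initial slice term at $t_0$, (ii) the $|\nabla\zeta|^p$ term, (iii) the $|\partial_t\zeta|$ term — which vanishes if $\zeta$ is taken independent of $t$ — and (iv) the Tail term. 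The Tail term is where the alternative enters: under the negation of the first conclusion, $\tail(u_-;x_0,\rho,\cdot)\le M$, one estimates (iv) as in \cref{sec:tail} by $C M^{p-1}\rho^{-sp}|B_\rho|$ times the length of the time interval, and similarly the initial slice term at $t_0$ is bounded using the hypothesis $|[u(\cdot,t_0)\ge M]\cap B_\rho|\ge\alpha|B_\rho|$, which says $w_j(\cdot,t_0)$ is supported on a set of measure at most $(1-\alpha)|B_\rho|$ with $w_j\le k_j\le M$ there.

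The next step is the measure-to-measure iteration: if at the initial time the bad set $[u(\cdot,t_0) < M]$ has measure at most $(1-\alpha)|B_\rho|$, I want to show that for all later times $t$ in the cylinder the set $[u(\cdot,t) < \epsilon M]$ still has measure at most $(1-\tfrac{\alpha}{2})|B_\rho|$. This follows by a De Giorgi–type lemma on each time slice combined with the energy bound: from the $L^\infty_t L^2_x$ estimate just derived one gets, for each fixed $t$,
\[
\int_{B_\rho} (u(\cdot,t)-k_{j})_{-}^2\,dx \le \int_{B_\rho} (u(\cdot,t_0)-k_{j})_{-}^2\,dx + C\,\delta M^2\,|B_\rho| \le (1-\alpha)k_j^2|B_\rho| + C\delta M^2|B_\rho|,
\]
and on the set $[u(\cdot,t) < k_{j+1}] = [u(\cdot,t) < k_j/2]$ the integrand is at least $(k_j/2)^2$, so
\[
|[u(\cdot,t) < k_{j+1}]\cap B_\rho| \le 4\bigl((1-\alpha) + C\delta\bigr)|B_\rho|.
\]
Choosing $j$ moderately large is \emph{not} what helps here — rather, the correct device is the standard interpolation between levels using the isoperimetric/shrinking machinery: apply \cref{lem:shrinking} (with the smallness condition \cref{smallness} guaranteed by the Tail alternative and the energy estimate) across a chain of levels between $M$ and $\epsilon M$ to force the measure of $[u(\cdot,t)<\epsilon M]$ below $\tfrac{\alpha}{2}|B_\rho|$ once $\epsilon = 2^{-j_0}$ is small enough, with $j_0 = j_0(N,s,p,\La,\alpha)$ and then $\delta = \delta(N,s,p,\La,\alpha)$ chosen accordingly.

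The main obstacle I anticipate is the coupling between the choice of $\delta$ (length of the time interval, which must be small so that the ``time error'' $C\delta M^2|B_\rho|$ does not overwhelm the measure-theoretic information carried from $t=t_0$) and the choice of $\epsilon$ (depth of positivity, governed by how many shrinking steps $j_0$ we can afford), together with the bookkeeping of the intrinsic scaling factor $M^{2-p}$ in the time length so that all estimates are scale-invariant: one must verify that with the cylinder $Q_{\rho}^{+}(\theta)$ at $\theta \sim M^{2-p}$ the constants in \cref{fracpoin} and \cref{energyest} and the Tail estimate of \cref{sec:tail} combine to close the iteration uniformly. A secondary technical point is handling the Tail of $(u-k_j)_-$ versus that of $u_-$: since $k_j \le M$, one has $(u-k_j)_- \le u_- + M$ pointwise, so the Tail of $(u-k_j)_-$ is controlled by $\tail(u_-;x_0,\rho,\cdot) + M \lesssim M$ under the alternative, exactly as outlined in \cref{sec:tail}; this must be inserted at each level without the constant degenerating in $j$.
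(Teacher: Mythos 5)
Your setup (energy estimate for the negative part with a time-independent cutoff, the Tail term absorbed via the first alternative, the initial-slice term bounded by the measure hypothesis at $t_0$) matches the paper's, and your first displayed inequality is essentially the paper's key estimate. The gap is in the final measure-comparison step. Halving the level from $k_j$ to $k_{j+1}=k_j/2$ only gives $(u-k_j)_-\geq k_j/2$ on $[u(\cdot,t)<k_{j+1}]$, so dividing by $(k_j/2)^2$ returns the factor $4(1-\alpha)$, which exceeds $1$ whenever $\alpha<3/4$; as you yourself observe, iterating in $j$ does not repair this. But the repair you then propose --- invoking \cref{lem:shrinking} across a chain of levels --- does not work either, for two reasons. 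First, the hypothesis of \cref{lem:shrinking} is a measure bound $|[u(\cdot,\tau)>m]\cap B_\rho|\geq\nu|B_\rho|$ holding for \emph{all} times $\tau$ in $J$, which is precisely the conclusion of the present lemma (that is why \cref{lem:shrinking} is applied only \emph{after} this lemma, in \cref{prop:degexp} and \cref{prop:sinexp}); using it here is circular. Second, \cref{lem:shrinking} outputs a space-time measure estimate on $B_\rho\times J$, whereas the statement to be proved is slice-wise for every $t$, and the former does not imply the latter.

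The actual device, simpler than what you propose, is to keep the single level $M$ and compare it against $\epsilon M$ with $\epsilon$ small: on $[u(\cdot,t)<\epsilon M]$ one has $(u-M)_-\geq(1-\epsilon)M$, so the slice-wise $L^2$ bound yields $|[u(\cdot,t)<\epsilon M]\cap B_{(1-\sigma)\rho}|\leq(1-\epsilon)^{-2}\bigl((1-\alpha)+C\delta\sigma^{-(N+sp)}\bigr)|B_\rho|$. Since $(1-\epsilon)^{-2}\to1$ as $\epsilon\to0$, one first chooses $\epsilon$ so that $(1-\alpha)/(1-\epsilon)^2\leq1-\tfrac34\alpha$, then $\sigma=\alpha/(8N)$ to control the annulus $B_\rho\setminus B_{(1-\sigma)\rho}$ left uncovered by the cutoff, and finally $\delta$ small so that the remaining error is at most $\tfrac{\alpha}{8}|B_\rho|$; this gives $|[u(\cdot,t)<\epsilon M]\cap B_\rho|\leq(1-\tfrac12\alpha)|B_\rho|$ for every $t$ in the interval. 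No iteration in levels, no shrinking lemma, and no De Giorgi iteration are needed.
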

\begin{proof}
Without loss of generality, we will assume that $(x_0,t_0)=(0,0)$. For $k>0$ and $t>0$, we set
\[
A_{k,\rho}(t) := [u(\cdot,t) < k] \cap B_{\rho},
\]
then the hypothesis of the lemma can be restated as:
\begin{equation}\label{eq41.1}
|A_{M,\rho}(0)|\leq (1-\alpha)|B_{\rho}|.
\end{equation}
We consider the energy estimate from \cref{energyest} for $(u-M)_{-}$ over the cylinder $B_{\rho}\times(0,\theta\rho^{sp}]$ where $\theta>0$ will be chosen later. Note that $(u-M)_{-} \leq M$ in $B_{\rho}$ because $u$ is nonnegative in $E_T$. For $\sigma \in (0,1/8]$ to be chosen later, we take a cutoff function $\zeta = \zeta(x)$, nonnegative such that it is supported in $B_{(1-\tfrac{\sigma}{2})\rho}$, $\zeta = 1$ on $B_{(1-\sigma)\rho}$ and $|\nabla \zeta| \leq \tfrac{2}{\sigma\rho}$ to get
\begin{equation}\label{eq4.1}
	\def\arraystretch{1.6}
	\begin{array}{l}
		\int_{B_{(1-\sigma)\rho}}(u-M)_-^2(x,\bar{t})\zeta^p(x)\,dx \\
		\hspace*{3cm}\leq  \int_{{B_{\rho}}}(u-M)_-^2(x,0)\zeta^p(x)\,dx\\
		\hspace*{4cm}+ C \int_{0}^{\bar{t}}\iint_{{B_\rho}\times {B_\rho}} \max\{(u-M)_-(x,t),(u-M)_-(y,t)\}^p\frac{|\zeta(x)-\zeta(y)|^p}{|x-y|^{N+sp}}\,dx\,dy\,dt \\ 		
		\hspace*{4cm} + C\underbrace{\int_{0}^{\bar{t}}\int_{ {B_\rho}}\zeta^p(x)(u-M)_-(x,t) \lbr \esssup\limits_{\stackrel{t \in (0,\bar{t})}{x \in \spt\zeta}}\int_{{\RR^N\setminus B_{\rho}}} \frac{(u-M)_-(y,t)^{p-1}}{|x-y|^{N+sp}}\,dy \rbr\,dx\,dt}_{:=III},
	\end{array}
\end{equation}
To estimate the tail term we first note that when $x \in B_{(1-\tfrac{\sigma}{2})\rho}$ and $y \in B_{\rho}^c$ then
\[
|y|\leq |x-y|\left(1+\frac{|x|}{|x-y|}\right) \leq \frac{2}{\sigma}|x-y| \quad \Longrightarrow \quad  \frac{1}{|x-y|^{N+sp}} \leq c\sigma^{-(N+sp)}\frac{1}{|y|^{N+sp}}.
\]
Furthermore, noting that $(u-M)_{-} \leq M-u \leq M + u_{-}$, we get
\begin{equation}\label{eq4.2}
	III  \leq \frac{c}{{\sigma}^{N+sp}}\left(\frac{M^{p-1}}{\rho^{sp}} + \frac{1}{\rho^{sp}}\tll_{\infty}^{p-1}(u_{-};0,\rho,(0,\bar{t}))\right) M\bar{t} |B_\rho| \overset{\text{hypothesis}}{\leq} \frac{c}{{\sigma}^{N+sp}}\frac{M^{p}}{\rho^{sp}} \bar{t} |B_\rho|.
\end{equation}
Substituting \cref{eq4.2} into \cref{eq4.1}, we get
\begin{equation*}
		\int_{B_{(1-\sigma)\rho}}(u-M)_{-}^2(x,\bar{t}) \,dx  \leq   M^2 |A_{M,\rho}(0)| + \frac{C M^p}{\sigma^p \rho^{sp}}\bar{t}|B_{\rho}| + \frac{c}{{\sigma}^{N+sp}}\frac{M^{p}}{\rho^{sp}} \bar{t} |B_\rho|.
\end{equation*}
Now restricting $\bar{t} \leq \delta \rho^{sp} M^{2-p}$ for some $\delta \in (0,1)$ to be chosen, we get
\begin{equation*}
		\begin{array}{rcl}
	\int_{B_{(1-\sigma)\rho}}(u-M)_{-}^2(x,\bar{t}) \,dx  & \leq &   M^2 |A_{M,\rho}(0)| + \delta \frac{C M^2}{\sigma^p}|B_{\rho}| + \frac{c}{{\sigma}^{N+sp}}\delta M^2 |B_\rho| \\
	& \overset{\cref{eq41.1}}{\leq} & M^2 (1-\alpha)|B_\rho| + \delta \frac{C M^2}{\sigma^p}|B_{\rho}| + \frac{c}{{\sigma}^{N+sp}}\delta M^2 |B_\rho| 
		\end{array}
\end{equation*}

On the other hand, we have
\begin{align*}
	\int_{B_{(1-\sigma)\rho}}(u-M)_{-}^2(x,t) \geq \int_{B_{(1-\sigma)\rho}\cap [u<\epsilon M]}(u-M)_{-}^2(x,t)
	\geq M^2(1-\epsilon)^2|A_{\epsilon M,(1-\sigma)\rho}(t)|,
\end{align*}
where $\epsilon \in (0,1)$ will be chosen below depending only on $\alpha$. Next, we note that
\[
|A_{\epsilon M,\rho}(t)| \leq |A_{\epsilon M,(1-\sigma)\rho}(t)| + |B_{\rho}-B_{(1-\sigma)\rho}| \leq |A_{\epsilon M,(1-\sigma)\rho}(t)| + N\sigma|B_{\rho}|. 
\]
Thus, combining everything, we have
\[
|A_{\epsilon M,\rho}(t)| \leq \frac{1}{(1-\epsilon)^2}\left((1-\alpha)+\delta \frac{\bar{C}}{\sigma^{N+sp}} + (1-\epsilon)^2N\sigma \right)|B_{\rho}|.
\]
Now we choose 
\begin{equation*}
	\sigma = \frac{\alpha}{8N}, \qquad \epsilon \leq 1 - \frac{\sqrt{1-\frac{3}{4}\alpha}}{\sqrt{1-\frac{1}{2}\alpha}} \Longrightarrow \frac{1-\alpha}{(1-\epsilon)^2} \leq 1- \frac{3\alpha}{4}, \quad \text{and} \quad  \delta =  \frac{\alpha \sigma^{N+sp} (1-\epsilon)^2}{8\bar{C}},
\end{equation*}
to get the desired conclusion.
\end{proof}

\section{Expansion of positivity for Nonlocal Degenerate Equations}\label{sec6}

We will assume that $u$ is a nonnegative, local, weak supersolution to \cref{maineq} in $E_T$ and  $p>2$. For $(x_0,t_0) \in E_T$ and some given positive number $M$ we consider the cylinder
\[
B_{8\rho}(x_0) \times \left(t_0,t_0+\frac{b^{p-2}}{(\eta M)^{p-2}}\delta\rho^{sp}\right] \subset B_{R}(x_0)\times [t_0-R^{sp},t_0+R^{sp}] \subset E_T,
\]
where $B_{R}(x_0)\times [t_0-R^{sp},t_0+R^{sp}] = B_R(x_0) \times I$ is our reference cylinder. The constants $b,\eta,\delta$ are constants given by \cref{prop:degexp} and $\rho > 0$ is chosen small enough. 
\begin{proposition}\label{prop:degexp}
Assume that for some $(x_0,t_0) \in E_T$, $\rho > 0$, $M>0$ and some $\alpha \in (0,1)$ the following assumption is satisfied:
\begin{equation}\label{eq5.1}
    |[u(\cdot,t_0) \geq M] \cap B_{\rho}(x_0)| \geq \alpha |B_{\rho}(x_0)|.
\end{equation}
 Then there exist constants $\eta, \delta, \sigma' \in (0,1)$ and $b>0$ depending only on the data and $\alpha$ such that either
\[
\tail\left(u_{-};x_0,\rho,\left(t_0,t_0+\frac{b^{p-2}}{(\eta M)^{p-2}}\delta\rho^{sp}\right)\right) > \eta M \quad \text{OR} \quad  u(\cdot,t) \geq \eta M \,\, \text{a.e. in } B_{2\rho}(x_0),
\]
holds for all times
\begin{equation*}
    t_0 + \frac{b^{p-2}}{(\eta M)^{p-2}}\sigma'\delta\rho^{sp} \leq t \leq t_0 + \frac{b^{p-2}}{(\eta M)^{p-2}}\delta\rho^{sp}. 
\end{equation*}
\end{proposition}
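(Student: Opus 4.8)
\emph{Strategy.} I would follow the classical degenerate‑case scheme for expansion of positivity, with the De~Giorgi isoperimetric inequality systematically replaced by the ``good term'' produced in \cref{energyest} and its measure‑theoretic corollary \cref{lem:isop}. After reducing to $(x_0,t_0)=(0,0)$ and writing $T_\ast := \frac{b^{p-2}}{(\eta M)^{p-2}}\delta\rho^{sp}$ for the target time scale, the proof proceeds in three stages.

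\emph{Stage 1: propagation of measure information in time.} From the hypothesis $|[u(\cdot,0)\geq M]\cap B_\rho|\geq\alpha|B_\rho|$, apply \cref{expandintime}, and then re‑apply it along a finite time mesh. At each re‑application the level is multiplied by the fixed factor $\epsilon\in(0,1)$ coming from \cref{expandintime}, while — crucially, since $2-p<0$ — the length of the interval on which the measure bound survives is multiplied by $\epsilon^{2-p}>1$. After a number $k_0=k_0(N,s,p,\La,\alpha)$ of steps one reaches, \emph{unless} $\tail(u_-;0,\rho,(0,T_\ast))>\eta M$, a small level $\eta M\sim\epsilon^{k_0}M$ together with a measure bound $|[u(\cdot,t)\geq\eta M]\cap B_\rho|\geq c_0|B_\rho|$ (with $c_0\sim\alpha/2^{k_0}$) valid for all $t$ in an interval $J$ of length comparable to the intrinsic scale $(\eta M)^{2-p}\rho^{sp}$ of the level $\eta M$; the constant $b$ is exactly what reconciles the ``natural'' scale $M^{2-p}\rho^{sp}$ of the hypothesis with this one. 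Monotonicity of the tail in the time interval lets the single tail alternative in the statement imply all the intermediate ones, after harmlessly shrinking $\eta$.

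\emph{Stage 2: shrinking.} On $B_\rho\times J$ (a fixed fraction of $B_{2\rho}$ is still controlled, so one may equally work on $B_{2\rho}$), run \cref{lem:shrinking} for the levels $l=\eta M/2^{j}$, $j=1,2,\dots$. The only thing to check is the smallness condition \cref{smallness} for the isoperimetric term; since that term occurs on the left of the energy estimate \cref{energyest} applied to $(u-l)_-$, I would bound it by the right‑hand side there: the initial‑data term is $\leq l^2|B_\rho|$ and is absorbed precisely because $|J|$ has been arranged to be of order $\rho^{sp}l^{2-p}$, the cutoff term is controlled using $(u-l)_-\leq l$, and the tail term is handled by the computation of \cref{sec:tail} together with the standing tail alternative. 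Iterating \cref{lem:shrinking} then makes $|[u<\eta M/2^{j}]\cap B_\rho\times J|$ an arbitrarily small fraction of $|B_\rho\times J|$; picking $j=j_\ast$ so that this fraction is at most the constant $\nu_-$ of \cref{lem:deG} — with $\theta\sim(\eta M)^{2-p}$ chosen so that $\nu_-$ depends only on the data and $\alpha$ — finishes the preparation.

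\emph{Stage 3: De~Giorgi iteration, and the main difficulty.} With the sublevel set small on the relevant cylinder, \cref{lem:deG} in its cylindrical form (applied at level $\sim\eta M$ with the above $\theta$, and with $\mu_-=0$ since $u\geq0$) gives, once more modulo the tail alternative, $u\geq\eta M$ a.e.\ on a sub‑cylinder $B_{2\rho}\times(\sigma' T_\ast, T_\ast]$ after a final shrinking of $\eta$ to absorb the factor $a$ and the universal constants; the radius loss is harmless inside the working cylinder $B_{8\rho}$. I expect the main obstacle to be Stage~2: verifying \cref{smallness} forces the energy estimate to be used at the intrinsic scale of the \emph{small} level $\eta M$ rather than the scale $M^{2-p}\rho^{sp}$ of the hypothesis, which is exactly why the preliminary time expansion of Stage~1 (and the bookkeeping constant $b$) cannot be avoided; keeping the degradation of $c_0$, $j_\ast$ and the tail threshold through the finite iteration under control — so that the final constants depend only on $\{N,s,p,\La,\alpha\}$ — is the delicate part.
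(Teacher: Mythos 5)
Your Stages 2 and 3 (shrinking via \cref{lem:shrinking}, then \cref{lem:deG}) match the paper's Steps 3--5, but your Stage 1 replaces the paper's key device — the DiBenedetto--Gianazza--Vespri exponential change of variable in time — with a forward-in-time chaining of \cref{expandintime}, and this is where the argument breaks. Each application of \cref{expandintime} at level $M_k$ propagates the measure bound only over an interval of length $\delta_k\rho^{sp}M_k^{2-p}$, i.e.\ over the intrinsic time scale of the level $M_k$ itself; to go further in time you must lower the level to $M_{k+1}=\epsilon_{k+1}M_k$, and on the new subinterval the measure information is available only at $M_{k+1}$, not at $M_k$. But \cref{lem:shrinking} requires the bound $|[u(\cdot,\tau)>m]\cap B_\rho|\geq\nu|B_\rho|$ at a \emph{single fixed} level $m$ for \emph{all} $\tau$ in an interval $J$ whose length, by the smallness condition \cref{smallness}, must be comparable to $\rho^{sp}(m2^{-j_*})^{2-p}=2^{j_*(p-2)}\rho^{sp}m^{2-p}$. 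The union of all subintervals on which your iteration gives the bound at level $\geq m$ has length only $\sim\rho^{sp}m^{2-p}$ (a geometric sum dominated by its last term), so the gap factor $2^{j_*(p-2)}$ is never bridged: lowering $m$ to lengthen the admissible interval simultaneously lowers the level at which the information is available, and the mismatch is scale-invariant. This is not a bookkeeping issue about degrading constants (though the fraction also halves at each chained application); it is a structural obstruction.

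The paper's Step 1 avoids it by never chaining: from $[u(\cdot,0)\geq M]\subset[u(\cdot,0)\geq\sigma M]$ it applies \cref{expandintime} once for each $\sigma=\sigma_\tau=e^{-\tau/(p-2)}$, always from the initial time and always with the full fraction $\alpha$, obtaining the bound at the decaying level $\epsilon\sigma_\tau M$ at the single time $e^\tau M^{2-p}\delta\rho^{sp}$. The renormalization $w(x,\tau)=\tfrac{1}{\sigma_\tau M}(\delta\rho^{sp})^{1/(p-2)}u\bigl(x,\tfrac{e^\tau}{M^{p-2}}\delta\rho^{sp}\bigr)$ then turns this into the bound $|[w(\cdot,\tau)\geq b_0]\cap B_\rho|\geq\tfrac12\alpha|B_\rho|$ at the \emph{fixed} level $b_0$ for \emph{all} $\tau>0$, with $w$ still a supersolution of an equation of the same type (the extra zero-order term has a good sign). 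Only then can the shrinking lemma be run on a cylinder of length $\theta(8\rho)^{sp}$ with $\theta=b_{j_*}^{2-p}$, and undoing the change of variable is exactly what produces the constants $b$, $\eta$, $\sigma'$ and the exponential factors $k_1,k_2$ in the statement. Without this (or an equivalent device, such as the logarithmic estimates of the older local theory, which the paper deliberately avoids), your outline does not close.
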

\begin{proof} Without loss of generality, we will assume that $(x_0,t_0) = (0,0)$. The proof of the proposition is split into the following steps:

\begin{description}[leftmargin=*]
\descitem{Step 1}{Step 1}\textit{Changing the time variable.}   For all $\sigma \leq 1$, it is easy to see that \cref{eq5.1} implies 
\begin{equation*}
	|[u(\cdot,0) \geq \sigma M] \cap B_{\rho}| \geq \alpha |B_{\rho}| \qquad \forall \ \sigma \leq 1.
\end{equation*}
For $\tau \geq 0$, let us also set 
\begin{equation*}
	\sigma_{\tau} := \exp\left(-\frac{\tau}{p-2}\right) \leq 1,
\end{equation*}
Assuming 
\begin{equation}\label{tail_est_1}
	\tail(u_{-};0,\rho,J_1) \leq \sigma_{\tau} M \qquad \text{where} \quad J_1 := (0,\delta\rho^{sp}(\sigma_{\tau}M)^{2-p}),
\end{equation}
then we can apply \cref{expandintime} to get
\begin{equation}\label{eq5.4}
	\left|\left[u\lbr\cdot,\frac{\delta\rho^{sp}}{(\sigma_{\tau} M)^{p-2}}\rbr \geq \epsilon\sigma_{\tau} M\right] \cap B_{\rho}\right|  \geq \frac{1}{2}\alpha |B_{\rho}|,
\end{equation}
for universal constants $\epsilon,\delta$ (depending only on $p,s,N,\La,\alpha$) in $(0,1)$. In particular, we can rewrite \cref{eq5.4} as 
\begin{equation}\label{eq5.4a}
\left|\left[u\left(\cdot,\frac{e^{\tau}}{ M^{p-2}}\delta\rho^{sp}\right) \geq \epsilon M\sigma_{\tau}\right] \cap B_{\rho}\right| \geq \frac12\alpha |B_{\rho}|.
\end{equation}
 As in \cite[Section 4.2]{dibenedettoHarnackInequalityDegenerate2012}, let us perform the change of variable
\begin{equation}
	w(x,\tau) := \frac{1}{\sigma_{\tau}M}(\delta\rho^{sp})^{\frac{1}{p-2}}u\left(x,\frac{e^{\tau}}{M^{p-2}}\delta\rho^{sp}\right),
\end{equation}
then \cref{eq5.4a} written  in terms of $w$ translates to 
\begin{equation}\label{eq5.5}
	|[w(\cdot,\tau) \geq b_0] \cap B_{\rho}| \geq \frac12 \alpha |B_{\rho}| \quad \text{for all} \ \tau >0,
\end{equation}
where
\begin{equation}\label{eq:k0}
	b_0  := \epsilon(\delta\rho^{sp})^{\frac{1}{p-2}}.
\end{equation}
We can further rewrite \cref{eq5.5} as 
\begin{equation}\label{timeexpdeg}
	|B_{4\rho} \setminus [w(\cdot,\tau) <  b_0]| \geq \frac{1}{2}\alpha 4^{-N} |B_{4\rho}| \qquad \text{for all} \ \tau > 0.
\end{equation}


\descitem{Step 2}{Step 2}{\it Relating $w$ to the evolution equation.} Since $u \geq 0$ in $E_T$, by formal calculations, we have
\begin{align*}
    w_{\tau} &= \left(\frac{1}{\sigma_{\tau}M}(\delta\rho^{sp})^{\frac{1}{p-2}}\right)^{p-1}u_t + \frac{1}{p-2}\frac{1}{M\sigma_{\tau}}(\delta\rho^{sp})^{\frac{1}{p-2}}u
    \\
    &\geq -\left(\frac{1}{\sigma_{\tau}M}(\delta\rho^{sp})^{\frac{1}{p-2}}\right)^{p-1}Lu
    \\
    &\geq - L_1w,
\end{align*}
in $E \times \bb{R}_{+}$ where 
\[
\left\{
\begin{array}{l}
L_1\varphi(x,\tau) = P.V. \int_{\bb{R}^N}B_1(x,y,t)J_p(\varphi(x,\tau)-\varphi(y,\tau)) dy, \\
K_1(x,y,\tau) = K\left(x,y,\frac{e^{\tau}}{M^{p-2}}\delta\rho^{sp}\right), \\
\frac{\Lambda^{-1}}{|x-y|^{N+sp}} \leq K_1(x,y,\tau) \leq 
\frac{\Lambda}{|x-y|^{N+sp}}.
\end{array}
\right.
\]
 The formal calculation can be made rigorous by appealing to the weak formulation and the energy estimates for $u$ can be transferred to energy estimates for $w$ by change of variable.

For any level $k \in \bb{R}$, the energy estimate from \cref{energyest} for $(w-k)_{-}$ in $Q_{8\rho}^{+}(\theta)$ yields
\begin{multline}\label{eq:wcaccio}
\int_{\theta(4\rho)^{sp}}^{\theta(8\rho)^{sp}}\int_{B_{4\rho}}(w-k)_{-}(x,\tau)\int_{B_{4\rho}}\frac{|(w-k)_{+}(y,\tau)|^{p-1}}{|x-y|^{N+sp}}\,dx\,dy\,d\tau
\\
\leq 
C \int_0^{\theta(8\rho)^{sp}}\int_{B_{8\rho}}\int_{B_{8\rho}} \max\{(w-k)_{-}(x,\tau),(w-k)_{-}(y,\tau)\}^{p}\frac{|\zeta(x,\tau)-\zeta(y,\tau)|^p}{|x-y|^{N+sp}}\,dx\,dy\,d\tau 
    \\ 
    + C \int_0^{\theta(8\rho)^{sp}}\int_{B_{8\rho}} (w-k)_{-}^2(x,\tau)|\de_{\tau} \zeta(x,\tau)| \,dx\,d\tau 
     \\
   +C\int_0^{\theta(8\rho)^{sp}}\int_{B_{8\rho}} (w-k)_{-}(x,\tau)\zeta(x,\tau) \lbr \underset{\tau \in (0,\theta(8\rho)^{sp})}{\esssup}\int_{B_{8\rho}^c}\frac{(w-k)_{-}^{p-1}(y,\tau)}{|x-y|^{N+sp}}\,dy\rbr\,dx\,d\tau,
\end{multline}
for any non-negative piecewise smooth cutoff vanishing on the parabolic boundary of $Q_{8\rho}^{+}(\theta)$. In particular, we  choose $\zeta \equiv 1$ on $ 
{Q}_{4\rho}(\theta) = B_{4\rho} \times ((4\rho)^{sp}\theta,(8\rho)^{sp}\theta]
$
with  $\spt(\zeta) \subset B_{6\rho}\times(0,(8\rho)^{sp}\theta]$ and satisfying
\[
|\nabla \zeta| \leq \frac{1}{2\rho} \qquad \text{and} \qquad  |\zeta_{\tau}| \leq \frac{1}{\theta(4\rho)^{sp}}.
\]
\descitem{Step 3}{Step 3}\textit{Shrinking lemma for $w$.} We claim that for every $\nu>0$ there exist $\epsilon_{\nu} \in (0,1)$ depending only on the data and $\alpha$, and $\theta = \theta(b_0,\epsilon_{\nu}) > 0$ depending only on $b_0$ and $\epsilon_{\nu}$ such that if
\begin{equation}\label{tail_est_2}
	\left\{\begin{array}{l}
\tail(u_{-};0,8\rho,J_2) \leq  2\epsilon_{\nu}\epsilon M\exp\left(-\frac{8^{sp}}{(p-2)(2\epsilon_{\nu}\epsilon)^{p-2}\delta}\right),\\
J_2 := \left(M^{2-p}\delta\rho^{sp},\exp\left(\frac{8^{sp}}{(2\epsilon_{\nu}\epsilon)^{p-2}\delta}\right)M^{2-p}\delta \rho^{sp}\right],
\end{array}\right.
\end{equation}

then the following conclusion holds:
\[
|[w < \epsilon_{\nu}B_0]\cap\mathcal{Q}_{4\rho}(\theta)| \leq \nu|\mathcal{Q}_{4\rho}(\theta)|.
\]
\begin{proof}[Proof of Claim:] In \cref{eq:wcaccio} we work with the levels $b_j$ and the parameter $\theta$ as follows
\[
b_j  = \frac{1}{2^j}b_0 \quad \text{ for } \quad 1 \leq j \leq j_{\ast} \qquad \text{ and } \qquad \theta = b_{j_{\ast}}^{2-p},
\]
where $b_0$ is given by \cref{eq:k0} and ${j_{\ast}}$ is to be chosen in \cref{eq:j*deg}.  We first note that since $u \geq 0$ in $Q_{8\rho}^{+}(\theta)$, all the local integrals on the right in \cref{eq:wcaccio} can be estimated using
\[
(w-b_j)_{-} \leq b_j \qquad \text{on} \ Q_{8\rho}^+(\theta).
\]
Thus,  by our choice of the test function and \cref{eq:wcaccio}, we get
\begin{align*}
\int_{\theta(4\rho)^{sp}}^{\theta(8\rho)^{sp}}\int_{B_{4\rho}}(w-b_j)_{-}(x,\tau)&\int_{B_{4\rho}}\frac{|(w-b_j)_{+}(y,\tau)|^{p-1}}{|x-y|^{N+sp}}\,dy\,dx\,d\tau\ 
\\
&\leq C\lbr \frac{b_{j}^p}{\rho^{sp}} + C \frac{b_j^2}{\theta \rho^{sp}} + b_j \underset{\tau \in (0,\theta(8\rho)^{sp})}{\esssup}\int_{B_{8\rho}^c}\frac{(w-b_j)_{-}^{p-1}(y,\tau)}{|y|^{N+sp}}\,dy\rbr |{Q}_{4\rho}(\theta)|.
\end{align*}
From  \cref{sec:tail}, suppose the following holds:
\begin{equation}\label{eq5.11}
\tll_{\infty}^{p-1}(w_{-};0,8\rho,(0,\theta(8\rho)^{sp})) \leq b_{j_{*}}^{p-1},
\end{equation}
  then we have 
\[
\underset{\tau \in (0,\theta(8\rho)^{sp})}{\esssup}\int_{B_{8\rho}^c}\frac{(w-b_j)_{-}^{p-1}(y,\tau)}{|y|^{N+sp}}\,dy  \leq C\frac{b_{j}^p}{\rho^{sp}}|{Q}_{4\rho}(\theta)|.
\]

In particular, recalling the choice of $\theta$, for any $1 \leq j \leq j_{\ast}$, we have
\begin{align*}
	\int_{\theta(4\rho)^{sp}}^{\theta(8\rho)^{sp}}\int_{B_{4\rho}}(w-b_j)_{-}(x,\tau)\int_{B_{4\rho}}\frac{|(w-b_j)_{+}(y,\tau)|^{p-1}}{|x-y|^{N+sp}}\,dy\,dx\,d\tau \leq C \frac{b_{j}^p}{\rho^{sp}}  |{Q}_{4\rho}(\theta)|	
	\end{align*}

The estimate from \cref{timeexpdeg} satisfies hypothesis of \cref{lem:shrinking}, thus we get
\[
|[w < b_{j+1}]\cap{Q}_{4\rho}(\theta)| \leq \left(\frac{C}{2^{j}-1}\right)^{p-1}|{Q}_{4\rho}(\theta)|,
\]
for a constant $C = C(N,p,s,\La,\al)>0$. Thus for a given $\nu \in (0,1)$, we  choose $j_*$ such that
\begin{equation}\label{eq:j*deg}
    \left(\frac{C}{2^{j_*}-1}\right)^{p-1}\leq \nu \qquad \text{and} \quad \epsilon_{\nu} := \frac{1}{2^{j_*+1}}.
\end{equation}

To conclude, we rewrite the Tail alternative in terms of $u_-$ as follows. 
\begin{equation*}
	\begin{array}{rcl}
    \underset{\tau \in (0,\theta(8\rho)^{sp})}{\esssup}\int_{B_{8\rho}^c}\frac{w_{-}^{p-1}(y,\tau)}{|y|^{N+sp}}\,dy &=& (\delta\rho^{sp})^{\frac{p-1}{p-2}}\frac{1}{M^{p-1}}\underset{\tau \in (0,\theta(8\rho)^{sp})}{\esssup}\int_{B_{8\rho}^c}\frac{u_{-}^{p-1}(y,e^{\tau}M^{2-p}\delta\rho^{sp})}{|y|^{N+sp}}e^{\tau\frac{p-1}{p-2}}\,dy
    \\
    &\leq & (\delta\rho^{sp})^{\frac{p-1}{p-2}}\frac{\exp\left(\theta(8\rho)^{sp}\frac{p-1}{p-2}\right)}{M^{p-1}}\underset{t \in J_2}{\esssup}\int_{B_{8\rho}^c}\frac{u_{-}^{p-1}(y,t)}{|y|^{N+sp}}\,dy,
    \end{array}
\end{equation*}
where 
\[
J_2 = \left(M^{2-p}\delta\rho^{sp},\exp((8\rho)^{sp}\theta)M^{2-p}\delta \rho^{sp}\right].
\]
Thus, \cref{eq5.11}  is verified provided
\[
\tail(u_{-};0,8\rho,J_2) \leq \frac{b_{j_{*}}}{(\delta\rho^{sp})^{\frac{1}{p-2}}}\frac{M}{\exp\left(\theta(8\rho)^{sp}\frac{1}{p-2}\right)}.
\]
Recalling that
\[
b_{j_{*}} = \frac{b_0}{2^{j_{*}}} = \frac{\epsilon(\delta\rho^{sp})^{\frac{1}{p-2}}}{2^{j_{*}}} = 2\epsilon_{\nu}\epsilon(\delta\rho^{sp})^{\frac{1}{p-2}} = \theta^{\frac{1}{2-p}},
\]
we get
\[
\exp\left(\frac{\theta(8\rho)^{sp}}{p-2}\right) = \exp\left(\frac{8^{sp}}{(p-2)(2\epsilon_{\nu}\epsilon)^{p-2}\delta}\right),
\]
and
\[
\frac{b_{j_{*}}}{(\delta\rho^{sp})^{\frac{1}{p-2}}}\frac{M}{\exp\left(\theta(8\rho)^{sp}\frac{1}{p-2}\right)} = 2\epsilon_{\nu}\epsilon M\exp\left(-\frac{8^{sp}}{(p-2)(2\epsilon_{\nu}\epsilon)^{p-2}\delta}\right),
\]
which completes the proof of the claim.
\end{proof}
\descitem{Step 4}{Step 4}\textit{Expansion of positivity for $w$.} We claim that there exists a $\nu = \nu(N,s,p,\La,\alpha) \in (0,1)$  such that either
\[
\tail(u_{-};0,4\rho,J_2) >  2\epsilon_{\nu}\epsilon M\exp\left(-\frac{8^{sp}}{(p-2)(2\epsilon_{\nu}\epsilon)^{p-2}\delta}\right)
\]
or
\[
w(\cdot,\tau) \geq \frac{1}{2}\epsilon_{\nu}b_0 \qquad \text{ a.e. in } B_{2\rho}\times \left(\frac{(8\rho)^{sp}-(2\rho)^{sp}}{(2\epsilon_{\nu}b_0)^{p-2}},\frac{(8\rho)^{sp}}{(2\epsilon_{\nu}b_0)^{p-2}}\right]
\]
holds, where $\epsilon_{\nu}$ is the number corresponding to $\nu$ in \descref{Step 3}{Step 3} (see \cref{eq:j*deg}) and $J_2$ is as defined in \cref{tail_est_2}.
\begin{proof}[Proof of the claim] We apply \cref{lem:deG} along with \cref{remark3.10} to $w$ over the cylinder
\[
{Q}_{4\rho}(\theta) = (0,\tau^*)+Q_{4\rho}^{-}(\theta) \qquad \text{ for } \tau_{*} = \theta(8\rho)^{sp}.
\]
We work with $\epsilon_{\nu}b_0$ instead of $\xi\omega$, $a = 1/2$ and $\mu_{-} \geq 0$ is ignored (since we assume $u$ is non-negative subsolution) to get that either
\begin{equation}\label{eq:tail3deg}
    \tail((w)_{-};0,4\rho,(\tau_{*}-\theta(4\rho)^{sp},\tau_{*}]) > \epsilon_{\nu}b_0,
\end{equation}
holds or 
\[
w(x,\tau) \geq \frac{1}{2}\epsilon_{\nu}b_0 \qquad \text{ for a.e. } \qquad (x,\tau) \in [(0,\tau^*)+Q_{2\rho}^{-}(\theta)],
\]
holds provided
\[
\frac{\left|[w<\epsilon_{\nu}b_0]\cap {Q}_{4\rho}(\theta)\right|}{|{Q}_{4\rho}(\theta)|} \leq \frac{1}{C}\left(\frac{1}{2}\right)^{N+2s}\frac{[\theta(\epsilon_{\nu}b_0)^{p-2}]^{\frac{N}{sp}}}{[1+2\theta(\epsilon_{\nu}b_0)^{p-2}]^{\frac{N+sp}{sp}}} = \frac{1}{C}\left(\frac{1}{2}\right)^{N+2s} \frac{1}{3^\frac{N+sp}{sp}} =\nu,
\]
for a universal constant $C>1$. With this choice of $\nu$, we find $\epsilon_{\nu}$ from \descref{Step 3}{Step 3}  and therefore $\theta = \frac{(2\epsilon_\nu\epsilon)^{2-p}}{\delta \rho^{sp}}$ quantitatively. 
The tail alternative \cref{eq:tail3deg} can be reformulated in terms of $u$ as in \descref{Step 3}{Step 3} to get
\begin{equation}\label{tail_est_3}	\left\{\begin{array}{l}
\tail(u_{-};0,4\rho,J_3) \leq  2\epsilon_{\nu}\epsilon M\exp\left(-\frac{8^{sp}}{(p-2)(2\epsilon_{\nu}\epsilon)^{p-2}\delta}\right),\\ 
J_3 := (\exp((8\rho)^{sp}\theta-(4\rho)^{sp}\theta)M^{2-p}\delta\rho^{sp},\exp((8\rho)^{sp}\theta)M^{2-p}\delta \rho^{sp}].
\end{array}\right.
\end{equation}
Since $J_2 \supset J_3$, the claim follows.
\end{proof}
\descitem{Step 5}{Step 5} {\it Expanding the positivity for $u$.} Assume that the Tail alternatives from \cref{tail_est_1},\cref{tail_est_2} and \cref{tail_est_3} hold . As $\tau$ ranges over the interval in \descref{Step 4}{Step 4}, we see that $e^{\tfrac{\tau}{p-2}}$ ranges over the interval
\[
k_1 := \exp\left(\frac{8^{sp}-2^{sp}}{(p-2)[2\epsilon_{\nu}\epsilon \delta^{\frac{1}{p-2}}]^{p-2}}\right) \leq f(\tau) \leq \exp\left(\frac{8^{sp}}{(p-2)[2\epsilon_{\nu}\epsilon \delta^{\frac{1}{p-2}}]^{p-2}}\right) =: k_2.
\]
Rewriting the conclusion of \descref{Step 4}{Step 4} in terms of $u$, we get that 
\begin{equation}\label{def_eta}
u(x,t) \geq \frac{\epsilon_{\nu}\epsilon M}{2k_2} =: \eta M \qquad \text{ for a.e. } \quad x \in B_{2\rho},
\end{equation}
holds for all times 
\[
 \frac{b^{p-2}}{(\eta M)^{p-2}}\sigma\delta\rho^{sp} \leq t \leq \frac{b^{p-2}}{(\eta M)^{p-2}}\delta\rho^{sp}, 
\]
where $b = b(N,s,p,\La,\al) > 0$  and $\sigma' = \sigma(N,s,p,\La,\al) \in (0,1)$. In fact, we have
\[
b = \frac{\epsilon_{\nu}\epsilon}{2} \quad \text{and} \qquad \text{and} \quad \sigma' = \left(\frac{k_1}{k_2}\right)^{p-2}.
\]
\descitem{Step 6}{Step 6} {\it Pulling the Tail alternatives together.}  We need the following Tail alternatives going from \descref{Step 1}{Step 1} and \descref{Step 3}{Step 3} noting that \cref{tail_est_2} implies \cref{tail_est_3}. In particular, we recall
\[
\begin{array}{rcl}
\tail(u_{-};0,\rho,J_1) &\leq&\sigma_{\tau} M,\\
\tail(u_{-};0,4\rho,J_2) &\leq &  2\epsilon_{\nu}\epsilon M\exp\left(-\frac{8^{sp}}{(p-2)(2\epsilon_{\nu}\epsilon)^{p-2}\delta}\right), \\
\tail(u_{-};0,8\rho,J_2) &\leq &  2\epsilon_{\nu}\epsilon M\exp\left(-\frac{8^{sp}}{(p-2)(2\epsilon_{\nu}\epsilon)^{p-2}\delta}\right),
\end{array}
\]
where we have taken 
\[
\begin{array}{rcl}
J_1 &=& (0,\delta\rho^{sp}(\sigma_{\tau}M)^{2-p}),\\
J_2 &=& \left(M^{2-p}\delta\rho^{sp},\exp\left(\frac{8^{sp}}{(2\epsilon_{\nu}\epsilon)^{p-2}\delta}\right)M^{2-p}\delta \rho^{sp}\right].
\end{array}
\]

Since we have $\tau\leq\theta(8\rho)^{sp}$ which implies the following inclusion holds:
\[
J_1 \subset (0,\delta\rho^{sp}M^{2-p}\exp(\theta(8\rho)^{sp})) = (0,\delta\rho^{sp}M^{2-p}\exp(8^{sp}\delta^{-1}(2\epsilon_{\nu}\epsilon)^{2-p})),
\]
and $\sigma_{\tau} \leq  \exp\left(-\frac{8^{sp}}{(p-2)(2\epsilon_{\nu}\epsilon)^{p-2}\delta}\right)$.
Furthermore, we choose $\eta$ such that 
\[
2\epsilon_{\nu}\epsilon M\exp\left(-\frac{8^{sp}}{(p-2)(2\epsilon_{\nu}\epsilon)^{p-2}\delta}\right) = \frac{2\epsilon_{\nu}\epsilon M}{k_2} \overset{\cref{def_eta}}{=} 4\eta M.
\]
Further choosing $\eta \in (0,1)$ small such that by an abuse of notation, denoting  $\eta = \frac{\eta}{8^{sp}}$, b the final Tail alternative which subsumes all others is
\[
\tail(u_{-};0,\rho,J) \leq \eta M \quad \text{where} \quad J := \left(0,\frac{b^{p-2}}{(\eta M)^{p-2}}\delta\rho^{sp}\right],
\]
where we recall $ b = \frac{\epsilon_{\nu}\epsilon}{2}$.
\end{description}
\end{proof}
\begin{remark}
The conclusion of \cref{prop:degexp} can also be written without Tail alternatives as 
\[
u(\cdot,t) \geq \eta M - \tail\left(u_{-};x_0,\rho,\left(t_0,t_0+\frac{b^{p-2}}{(\eta M)^{p-2}}\delta\rho^{sp}\right)\right) \qquad \text{a.e. in } B_{2\rho}(x_0), 
\]
holds for all times
\[
    t_0 + \frac{b^{p-2}}{(\eta M)^{p-2}}\sigma' \delta\rho^{sp} \leq t \leq t_0 + \frac{b^{p-2}}{(\eta M)^{p-2}}\delta\rho^{sp},
\]
because we are working with  nonnegative solutions.
\end{remark}
\begin{remark}
    In the proof of H\"older regularity we use \cref{prop:degexp} with $\al = 1/2$ so that  $\eta, \delta, \sigma' \in (0,1)$ and $b>0$ depend only on the data in the proof. 
\end{remark}
\section{Expansion of positivity for Nonlocal Singular Equations}\label{sec7}

In this section, we will consider $1 < p < 2$ and  assume that $u$ is a nonnegative, local, weak supersolution to \cref{maineq} in $E_T$. For $(x_0,t_0) \in E_T$ and some given positive numbers $M >0$ and $\delta \in (0,1)$, we consider the cylinder
\[
(x_0,t_0) + Q_{16\rho}(\delta M^{2-p}): = B_{16\rho}(x_0) \times (t_0,t_0+\delta M^{2-p}\rho^{sp}] \subset B_{R}(x_0)\times [t_0-R^{sp},t_0+R^{sp}] \subset E_T,
\]
where $B_{R}(x_0)\times [t_0-R^{sp},t_0+R^{sp}] = B_R(x_0) \times I$ is our reference cylinder and $\rho > 0$ is chosen small enough. 
\begin{proposition}\label{prop:sinexp}
Assume that for some $(x_0,t_0) \in E_T$ and some $\rho > 0$, $M>0$ and $\alpha \in (0,1)$,  the following hypothesis holds
\begin{equation*}
    |[u(\cdot,t_0) \geq M] \cap B_{\rho}(x_0)| \geq \alpha |B_{\rho}(x_0)|.
\end{equation*}
 Then there exist constants $\eta,\delta$ and $\varepsilon$ in $(0,1)$ depending only on the data and $\alpha$ such that either
\[
\tail(u_{-};x_0,\rho,(t_0,t_0+\delta\rho^{sp}M^{2-p}))  >  \eta M \qquad  \text{OR} \qquad  u(\cdot,t) \geq \eta M \qquad \text{a.e. in } B_{2\rho}(x_0),
\]
holds for all times
\begin{equation*}
    t_0 + (1-\varepsilon)\delta M^{2-p}\rho^{sp} \leq t \leq t_0 + \delta M^{2-p}\rho^{sp}.
\end{equation*}
\end{proposition}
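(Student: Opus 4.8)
The plan is to run the argument in close parallel with the proof of \cref{prop:degexp}, but \emph{without} the exponential change of time variable: for $1<p<2$ the Proposition only asks for positivity on a finite sub-interval near the top of $(t_0,t_0+\delta\rho^{sp}M^{2-p}]$, and \cref{expandintime} already supplies the measure information at the \emph{fixed} level $\epsilon M$ for \emph{all} times of that interval, so the DBGV iteration in $\tau$ is unnecessary. Assume $(x_0,t_0)=(0,0)$. First I would apply \cref{expandintime}: either $\tail(u_-;0,\rho,(0,\delta_0\rho^{sp}M^{2-p}))>M$ (which, after collecting Tail alternatives at the end, gives the first option of the Proposition), or $|[u(\cdot,t)\geq\epsilon_0 M]\cap B_\rho|\geq\tfrac12\alpha|B_\rho|$ for all $t\in(0,\delta_0\rho^{sp}M^{2-p}]$, with $\epsilon_0,\delta_0\in(0,1)$ depending on $\{N,s,p,\La,\alpha\}$; enlarging the ball, $|[u(\cdot,t)\geq\epsilon_0M]\cap B_{4\rho}|\geq\tfrac{\alpha}{2\cdot4^N}|B_{4\rho}|$ for all such $t$.

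Next, with $j_*\geq1$ to be fixed, set $l:=\epsilon_0 M/2^{j_*}$, $\theta:=c_0 l^{2-p}$ (with $c_0=c_0(N,s,p)$ to be chosen), $T^*:=\delta_0\rho^{sp}M^{2-p}$ and $\mathcal{Q}:=B_{4\rho}\times(T^*-\theta(8\rho)^{sp},T^*]$, noting that $2\theta(8\rho)^{sp}\leq T^*$ once $j_*$ is large, since $2-p>0$. I would apply \cref{energyest} to the supersolution $u$ at level $l$ (in the form governing $(u-l)_-$) on $B_{8\rho}\times(T^*-2\theta(8\rho)^{sp},T^*]$, with a space-time cutoff equal to $1$ on $\mathcal{Q}$, supported in $B_{6\rho}\times(T^*-2\theta(8\rho)^{sp},T^*]$, vanishing at the bottom time, with $|\nabla\zeta|\lesssim\rho^{-1}$ and $|\partial_\tau\zeta|\lesssim(\theta\rho^{sp})^{-1}$. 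The left-hand ``good term'' of \cref{energyest} dominates $\int\int_{B_{4\rho}}(u-l)_-(x,\tau)\int_{B_{4\rho}(x)}\tfrac{(u-l)_+^{p-1}(y,\tau)}{|x-y|^{N+sp}}\,dy\,dx\,d\tau$ over the time interval of $\mathcal{Q}$. On the right: the initial term vanishes (cutoff $=0$ at the bottom); since $u\geq0$, $(u-l)_-\leq l$, so the gradient and time-derivative terms are each $\lesssim l^p\rho^N\theta\sim\tfrac{l^p}{\rho^{sp}}|\mathcal{Q}|$, the time-derivative one using precisely $\theta\sim l^{2-p}$ to absorb the factor $l^{2-p}$; and the Tail term, treated by the recipe of \cref{sec:tail} applied to $(u-l)_-\leq l+u_-$, is $\lesssim\tfrac{l^p}{\rho^{sp}}|\mathcal{Q}|$ once one imposes the Tail alternative $\tail(u_-;0,8\rho,(T^*-2\theta(8\rho)^{sp},T^*])\leq l$. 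This verifies hypothesis \cref{smallness} of \cref{lem:shrinking} (with $A=1$, ball $B_{4\rho}$, level $m=\epsilon_0 M$, $j=j_*$, and $\nu=\tfrac{\alpha}{2\cdot4^N}$), and \cref{lem:shrinking} gives $|[u<l/2]\cap\mathcal{Q}|\leq\bigl(\tfrac{C_1}{2^{j_*}-1}\bigr)^{p-1}|\mathcal{Q}|$ with $C_1=C_1(N,s,p,\La,\alpha)$.

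Now I would apply \cref{lem:deG} together with \cref{remark3.10} to $u$ over $\mathcal{Q}$, with the parameters $\rho$ and $\theta$ of that lemma taken to be $2\rho$ and $\theta':=2^{sp}\theta$ (so $\mathcal{Q}$ is exactly the cylinder appearing there), and with $\xi M=l/2$, $a=\tfrac12$, $\mu_-=0$. Because $\theta\sim l^{2-p}$, the ratio $(\xi M)^{2-p}/\theta'$ is a fixed number, so the threshold $\nu_-$ of \cref{lem:deG} (see \cref{eq3.18}) is a fixed positive constant depending only on the data; hence one may now fix $j_*=j_*(N,s,p,\La,\alpha)$ so large that $\bigl(\tfrac{C_1}{2^{j_*}-1}\bigr)^{p-1}\leq\nu_-$ (and $2\theta(8\rho)^{sp}\leq T^*$). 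Then the conclusion of the previous step is the hypothesis of \cref{lem:deG}, which yields either a Tail alternative at radius $4\rho$, or $u\geq\tfrac{l}{4}=:\eta_0 M$ a.e. on $B_{2\rho}\times(T^*-\theta'(2\rho)^{sp},T^*]$; since $\theta'(2\rho)^{sp}=c_0 l^{2-p}4^{sp}\rho^{sp}$, this is the interval $((1-\varepsilon)\delta_0 M^{2-p}\rho^{sp},\delta_0 M^{2-p}\rho^{sp}]$ with $\delta:=\delta_0$ and $\varepsilon:=c_0(\epsilon_0/2^{j_*})^{2-p}4^{sp}/\delta_0\in(0,1)$. Finally, exactly as in the last step of the proof of \cref{prop:degexp}, the three Tail alternatives produced (radii $\rho,8\rho,4\rho$, thresholds $M,l,l/2$, over sub-intervals of $(0,T^*)$) are folded into one using $\tail(u_-;0,R',I')\leq(R'/R)^{sp/(p-1)}\tail(u_-;0,R,I)$ for $R'\geq R$, $I'\subset I$: taking $\eta:=\eta_0\,2^{-3sp/(p-1)}$ makes $\tail(u_-;0,\rho,(0,\delta\rho^{sp}M^{2-p}))\leq\eta M$ force all three Tail hypotheses, in which case the positivity holds and $u\geq\eta_0 M\geq\eta M$.

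The genuinely non-routine point — and where the singular range departs from \cref{prop:degexp} — is the choice of the intrinsic scaling $\theta$. The time-derivative term of \cref{energyest} contributes $\sim l^2\rho^N$; since $2-p>0$ this is absorbable into $\tfrac{l^p}{\rho^{sp}}|\mathcal{Q}|$ only when $|\mathcal{Q}|\gtrsim l^{2-p}\rho^{sp}$, i.e. only for the \emph{small} scale $\theta\sim l^{2-p}$, and one must simultaneously check that this same scale keeps $(\xi M)^{2-p}/\theta$, hence $\nu_-$ in \cref{lem:deG}, bounded away from $0$ as $j_*\to\infty$. This closes precisely because \cref{lem:shrinking} is invoked \emph{once}, at the single level $l=\epsilon_0 M/2^{j_*}$, so that $l$, $\theta$ and $\xi M=l/2$ all remain proportional; keeping instead $\theta\sim M^{2-p}$ (the scale of the interval furnished by \cref{expandintime}) and iterating through a chain of levels would force $\nu_-\sim(\epsilon_0/2^{j_*})^{2-p}$ and the geometric iteration would fail to close for $p<3/2$. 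This is also why the exponential change of variable, which in \cref{prop:degexp} lets the level and the time-scale drift together as $\tau\to\infty$, is neither needed nor helpful here.
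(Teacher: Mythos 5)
Your proof is correct, but it takes a genuinely different route from the paper's. The paper runs the full DiBenedetto--Gianazza--Vespri machinery even in the singular range: it performs the exponential change of variables $v(z,\tau)=\tfrac{1}{M}u(x,t)e^{\tau/(2-p)}$, applies the shrinking lemma on a cylinder of $\tau$-length $b_0^{2-p}$, pigeonholes over $2^{j_*(2-p)}$ subcylinders of length $b_{j_*}^{2-p}$ to find a single good time slice $\tau_1$ at which \cref{lem:deG} yields pointwise positivity, and then propagates forward from $t_1$ with the forward-in-time lemma \cref{lem:deGdata}, tuning $\tau_0$ at the end so that the forward window lands exactly on $[(1-\varepsilon)\delta M^{2-p}\rho^{sp},\delta M^{2-p}\rho^{sp}]$. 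You instead exploit the fact that \cref{expandintime} supplies the measure information at the fixed level $\epsilon_0 M$ for \emph{every} time in $(0,T^*]$, so you may anchor a single intrinsically scaled cylinder of time-length $\sim l^{2-p}\rho^{sp}$ flush against the top $T^*$, invoke the shrinking lemma once at the final level $l=\epsilon_0 M/2^{j_*}$, and close with one backward application of \cref{lem:deG}; the subdivision, the good-time-slice step, the forward lemma and the matching of $\tau_0$ all disappear. Your diagnosis of the scaling is the right one: the time-derivative term in \cref{energyest} forces $\theta\gtrsim l^{2-p}$, while keeping $\nu_-$ in \cref{eq3.18} bounded below independently of $j_*$ forces $\theta\lesssim l^{2-p}$, and both hold simultaneously precisely because the shrinking lemma is used at the single level $l$ --- this is the same tension the paper resolves with its subcylinders of length $b_{j_*}^{2-p}$. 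What the paper's longer route buys is a template that also works when the measure hypothesis is available only at one initial time (and which parallels the degenerate case); your route is shorter and shows the exponential change of variables is dispensable for $1<p<2$. Two cosmetic points: the inner ball in \cref{smallness} should be taken with $A=2$, since for $x\in B_{4\rho}(0)$ the energy estimate controls the integral over $B_{8\rho}(0)\supset B_{4\rho}(x)$ rather than over $B_{4\rho}(0)$ (harmless, as \cref{lem:isop} only uses the restriction to the diagonal ball); and your folding inequality $\tail(u_-;0,R',I')\leq(R'/R)^{sp/(p-1)}\tail(u_-;0,R,I)$ holds for $R'\geq R$ and $I'\subseteq I$, which is indeed how you apply it.
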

\begin{proof}Without loss of generality, we will assume that $(x_0,t_0) = (0,0)$.
\begin{description}[leftmargin=*]
\descitem{Step 1}{step1} {\it Changing variables.} From \cref{exptime} we get that there exist $\delta$ and $\epsilon$ in $(0,1)$ depending only on the data and $\alpha$ such that one of the following two possibilities must hold:
\[
\tail(u_{-};0,\rho,(0,\delta\rho^{sp}M^{2-p})) > M,
\]
or 
\begin{equation}\label{eq:timeexpsing}
|[u(\cdot,t) > \epsilon M] \cap B_{\rho}| \geq \frac{1}{2}\alpha|B_{\rho}| \qquad \text{ for all } \qquad t \in (0,\delta M^{2-p}\rho^{sp}].
\end{equation}
Let us now define the change of variables
\begin{equation}\label{eq:covsing}
z = \frac{x}{\rho}, \qquad -e^{-\tau} = \frac{t-\delta M^{2-p}\rho^{sp}}{\delta M^{2-p}\rho^{sp}} \qquad \text{and} \quad v(z,\tau) = \frac{1}{M}u(x,t)e^{\frac{\tau}{2-p}}.
\end{equation}
Then the cylinder $Q_{16\rho}(\delta M^{2-p})$ in $(x,t)$ coordinates is mapped into $B_{16}\times (0,\infty)$ in the $(z,\tau)$ coordinates. By formal calculations (which can be made rigorous using the weak formulation), we have
\begin{align}\label{eq:expeqsing}
    v_{\tau} = \frac{1}{2-p}v + \frac{e^{\frac{p-1}{2-p}\tau}\delta\rho^{sp}}{M^{p-1}}u_t(z\rho,\delta M^{2-p}\rho^{sp}(1-e^{-\tau}))
    = \frac{1}{2-p}v + \delta L_1v,
\end{align}
in $B_{16} \times (0,\infty)$, where 
\[\begin{array}{rcl}
L_1\varphi(z,\tau) & = &  P.V. \int_{\bb{R}^N}K_1(z,z',\tau)J_p(\varphi(z,\tau)-\varphi(z',\tau)) \,dz', \\
K_1(z,z',\tau) &=& \rho^{N+sp}K\left(z\rho,z'\rho,\delta M^{2-p}\rho^{sp}(1-e^{-\tau})\right),
\end{array}
\]
satisfying
\[
\frac{\Lambda^{-1}}{|z-z'|^{N+sp}} \leq K_1(z,z',\tau) \leq 
\frac{\Lambda}{|z-z'|^{N+sp}}.
\]
 We rewrite \cref{eq:timeexpsing} as 
\begin{equation}\label{eq:sing2}
    |[v(\cdot,\tau) \geq \epsilon e^{\frac{\tau}{2-p}}]\cap B_1| \geq \frac{1}{2}\alpha |B_1| \qquad \text{ for all } \tau \in (0,\infty).
\end{equation}
For {constants $\tau_0 > 0$ (chosen in \cref{tauz0}) and $j_{*} \in \NN$ (chosen in \descref{step3}{Step 3})}, we take
\begin{equation}\label{eq:k0sing}
b_0 = \epsilon e^{\frac{\tau_0}{2-p}} \qquad \text{and} \quad  b_j = b_02^{-j}\ \  \text{ for } 1 \leq j \leq j_{*}.
\end{equation}
Then from \cref{eq:sing2}, we see that
\begin{equation}\label{eq:6.4}
    |[v(\cdot,\tau) \geq b_j]\cap B_{8}| \geq \frac{1}{2}\alpha 8^{-N}|B_8| \qquad \text{  holds for all } \tau \in (\tau_0,\infty) \quad \text{and} \quad \forall \,j \in \NN.
\end{equation}

 Consider the cylinders
\begin{equation}\label{6.7:eq}
Q_{\tau_0} = B_8 \times (\tau_0+b_0^{2-p},\tau_0+2b_0^{2-p}) \quad \text{and} \quad 
Q_{\tau_0'} = B_{16} \times (\tau_0, \tau_0+2b_0^{2-p}),
\end{equation}
and a nonnegative, piecewise smooth cutoff in $Q_{\tau_0'}$ of the form $\zeta(z,\tau) = \zeta_1(z)\zeta_2(\tau)$ such that
\[
\begin{array}{ll}
\zeta_1 = \left\{ \begin{array}{lcl} 1 & \text{in} & B_8 \\ 0 & \text{on} & \RR^N\setminus B_{12}\end{array}\right. & |\de \zeta_1| \leq \frac14,\\
\zeta_2 = \left\{ \begin{array}{lcl} 0 & \text{for} & \tau < \tau_0 \\ 1 & \text{for} & \tau \geq \tau_0 + b_0^{2-p} \end{array}\right. & |\de_t \zeta_2| \leq \frac1{b_0^{2-p}}.
\end{array}
\]
From \cref{energyest} applied  for $(v-b_j)_{-}$ over $Q_{\tau_0'}$ with the cutoff  function $\zeta$, we get
\begin{align}\label{eq:vcaccio}
\int_{\tau_0}^{\tau_0 +2b_0^{2-p}}&\int_{B_{16}}(v-b_j)_{-}(z,\tau)\zeta^p(z,\tau)\int_{B_{16}}\frac{(v-b_j)_{+}^{p-1}(z',\tau)}{|z-z'|^{N+sp}}\,dz'\,dz\,d\tau\  \nonumber
\\
&\leq 
C \int_{\tau_0}^{\tau_0 +2b_0^{2-p}}\int_{B_{16}}\int_{B_{16}} \max\{(v-b_j)_{-}(z,\tau),(v-b_j)_{-}(z',\tau)\}^{p}\frac{|\zeta(z,\tau)-\zeta(z',\tau)|^p}{|z-z'|^{N+sp}}\,dz\,dz'\,d\tau \nonumber
    \\ 
     &+ C \int_{\tau_0}^{\tau_0 +2b_0^{2-p}}\int_{B_{16}} (v-b_j)_{-}^2(z,\tau)|\de_\tau \zeta(z,\tau)| \,dz\,d\tau \nonumber
     \\
     &+C \int_{\tau_0}^{\tau_0 +2b_0^{2-p}}\int_{B_{16}} (v-b_j)_{-}(z,\tau)\zeta^p(z,\tau)\left(\underset{\tau \in (\tau_0, \tau_0+2b_0^{2-p})}{\esssup}\int_{B_{16}^c}\frac{(v-b_j)_{-}^{p-1}(z',\tau)}{|z-z'|^{N+sp}}\,dz'\right)\,dz\,d\tau,
\end{align}
where $C = \frac{C}{\delta}$ (note the $\delta$ in \cref{eq:expeqsing}) is a constant depending only on the data and $\delta$. To obtain the above estimate, we tested \cref{eq:expeqsing} with $-(v-b_j)_{-}\zeta^p$ and discarded the nonpositive contribution on the right-hand side from the nonnegative term $\tfrac{1}{2-p}v$.
\descitem{Step 2}{step2} {\it Shrinking lemma for $v$.} For any $j \geq 2$, we claim that either of the following two possibilities hold:
\begin{equation}\label{tailest_sing_2}\left\{
\begin{array}{l}
\tail(u_{-};0,16\rho,J_2') > \exp\left(\tfrac{-2b_0^{2-p}}{2-p}\right)\frac{\epsilon M}{2^j}, \\
J_2' = \left(M^{2-p}\delta\rho^{sp}(1-e^{-\tau_0}),M^{2-p}\delta\rho^{sp}(1-e^{-\tau_0-2b_0^{2-p}})\right),
\end{array}\right.
\end{equation}
or
\[
|[v < b_{j}]\cap Q_{\tau_0}| \leq \nu|Q_{\tau_0}| \qquad \text{where} \quad \nu = \left(\frac{C}{2^{j-1}-1}\right)^{p-1},
\]
for a universal constant $C>0$ depending only on $\alpha$ and data. 

\begin{proof}[Proof of Claim] In \cref{eq:vcaccio}, we work with levels $b_j$ defined by
\[
b_j  = \frac{1}{2^j}b_0 \qquad \text{ for } \qquad j \geq 1,
\]
where $b_0$ is given by \cref{eq:k0sing}. We first note that since $v \geq 0$ in $Q_{\tau_0'}$, we have  $(v-b_j)_{-} \leq b_j$ on $Q_{\tau_0'}$. Hence from our choice of the test function, we get
\begin{multline*}
\int_{\tau_0+b_0^{2-p}}^{\tau_0 +2b_0^{2-p}} \int_{B_{8}}(v-b_j)_{-}(z,\tau)\int_{B_{8}}\frac{|(v-b_j)_{+}(z',\tau)|^{p-1}}{|z-z'|^{N+sp}}\,dz'\,dz\,d\tau \\
\leq C|Q_{\tau_0'}|\lbr b_{j}^p
     +b_{j}\underset{\tau \in (\tau_0, \tau_0+2b_0^{2-p}) }{\esssup}\int_{B_{16}^c}\frac{(v-b_j)_{-}^{p-1}(z',\tau)}{|z'|^{N+sp}}\,dz'\rbr
\end{multline*}
where $C>0$ is a universal constant. We now make use of the Tail estimates from \cref{sec:tail} to get
\[
\int_{\tau_0+b_0^{2-p}}^{\tau_0 +2b_0^{2-p}}\int_{B_{8}}(v-b_j)_{-}(z,\tau)\int_{B_{8}}\frac{|(v-b_j)_{+}(z',\tau)|^{p-1}}{|z-z'|^{N+sp}}\,dz'\,dz\,d\tau\  \leq Cb_{j}^p|Q_{\tau_0'}|,
\]
holds provided the following is satisfied
\begin{equation}\label{eq:Tailsingv}
\tll_{\infty}^{p-1}(v_{-};0,16,(\tau_0, \tau_0+2b_0^{2-p})) \leq b_j^{p-1}.
\end{equation}


We now invoke \cref{lem:shrinking} and \cref{eq:sing2} to conclude that
\[
|[v < b_{j+1}]\cap Q_{\tau_0}| \leq \left(\frac{C}{2^{j}-1}\right)^{p-1}|Q_{\tau_0'}|\overset{\cref{6.7:eq}}{\leq} C \left(\frac{C}{2^{j}-1}\right)^{p-1}|Q_{\tau_0}|,
\]
for a constant $C>0$ depending only on $\alpha$ and data. This proves the claim with the choice
$\nu = \left(\frac{C}{2^{j}-1}\right)^{p-1}.$

We now rewrite the Tail alternative in terms of $u_-$ as follows. 
\begin{equation}\label{Tailestsingu1}
	\begin{array}{rcl}
    \underset{\tau \in (\tau_0, \tau_0+2b_0^{2-p})}{\esssup}\int_{B_{16}^c}\frac{v_{-}^{p-1}(z',\tau) }{|z'|^{N+sp}}\,dz'& \overset{\cref{eq:covsing}}{\leq}& \frac{\exp((\tau_0+2b_0^{2-p})\frac{p-1}{2-p})}{M^{p-1}}\rho^{sp}\left(\underset{t\in J_2'}{\esssup}\int_{B_{16\rho}^c}\frac{u(y,t)^{p-1}_{-}}{|y|^{N+sp}}\,dy\right)
    \\
    &\leq& \frac{\exp((\tau_0+2b_0^{2-p})\frac{p-1}{2-p})}{M^{p-1}}\tll_{\infty}^{p-1}(u_{-};0,16\rho,J_2'),
    \end{array}
\end{equation}
where 
\[
J_2' := \left(M^{2-p}\delta\rho^{sp}(1-e^{-\tau_0}),M^{2-p}\delta\rho^{sp}(1-e^{-\tau_0-2b_0^{2-p}})\right).
\]

Thus, making use of \cref{Tailestsingu1}, we see that \cref{eq:Tailsingv} is satisfied provided
\[
\tail(u_{-};0,16\rho,J_2') \leq  \exp\left(\frac{-\tau_0-2b_0^{2-p}}{2-p}\right)Mb_j.
\]
Recalling \cref{eq:k0sing}, we get
\[
\exp\left(\frac{-\tau_0-2b_0^{2-p}}{2-p}\right)Mb_j = \exp\left(\frac{-2b_0^{2-p}}{2-p}\right)\frac{\epsilon M}{2^j},
\]
which completes the proof of the claim.
\end{proof}

\descitem{Step 3}{step3} {\it Obtaining a good time slice for $v$.}  We claim that there exist $\sigma_0 \in (0,1)$ and $j_{*} \in (1,\infty)$ depending only on the data such that either
\[
\tail(u_{-};0,8\rho,J_2') > \exp\left(\frac{-2b_0^{2-p}}{2-p}\right)\frac{\epsilon M}{2^{j_{*}}},
\]
holds (recall \cref{tailest_sing_2}) or
there exists a time level $\tau_1 \in (\tau_0 + b_0^{2-p}, \tau_0 + 2b_0^{2-p})$ such that the following holds:
\begin{equation}\label{conc2:eq}
v(z,\tau_1) \geq \sigma_0e^{\frac{\tau_0}{2-p}}.
\end{equation}

\begin{proof}[Proof of the claim] Assume momentarily that $j_*$  is fixed and hence $\nu$ has been determined according to \descref{step2}{Step 2}. By increasing $j_*$ if required to not necessarily be an integer, we can further assume without loss of generality that $2^{j_{*}(2-p)}$ is an integer. Next, we subdivide $Q_{\tau_0}$ into  $2^{j_{*}(2-p)}$ cylinders each of length $b_{j_*}^{2-p}$ given by
\[
Q_n = B_8 \times (\tau_0 + b_0^{2-p} + nb_{j_*}^{2-p}, \tau_0 + b_0^{2-p} + (n+1)b_{j_*}^{2-p}) \qquad\mbox{ for } n = 0,1,\ldots,2^{j_{*}(2-p)}-1.
\]
Then from \descref{step2}{Step 2}, either \cref{tailest_sing_2} holds or for at least one of the sub-cylinders, we must have
\[
|[v<b_{j_*}\cap Q_n]| \leq \nu |Q_n|. 
\]
We now apply \cref{lem:deG} to $v$ over $Q_n$ with 
\[
\mu_- = 0, \qquad \xi M = b_{j_*}, \qquad a = \frac{1}{2}, \qquad \text{and} \quad  \theta = b_{j_*}^{2-p},
\]
to obtain 
\begin{equation}\label{conc:eq}
v(z,\tau_0 + b_0^{2-p} + (n+1)b_{j_*}^{2-p}) \geq \frac{1}{2}b_{j_*} \qquad \text{ a.e. } z\in B_4.
\end{equation}
 provided the following two estimates hold
\begin{equation}\label{eq:tailstep3sing}
	\begin{array}{c}
\tail(v_{-};0,8,(\tau_0 + b_0^{2-p} + nb_{j_*}^{2-p}, \tau_0 + b_0^{2-p} + (n+1)b_{j_*}^{2-p})) \leq b_{j_{*}},\\
\frac{|[v<b_{j_*}]\cap Q_n|}{|Q_n|} \overset{\cref{eq3.18}}{\leq} 3^{- \frac{N+sp}{sp}}\overline{\gamma_0}(\text{data}) =: \nu.
\end{array}
\end{equation}

With the choice of $\nu$ from \cref{eq:tailstep3sing}, we can now choose $j_*$ according to \descref{step2}{Step 2} such that \cref{conc:eq} holds. Recalling the definition of $b_0$ from \cref{eq:k0sing}, if we set $\sigma_0 = \epsilon2^{-(j_{*}+1)}$, then we see that \cref{conc2:eq} holds with $\tau_1=\tau_0 + b_0^{2-p} + (n+1)b_{j_*}^{2-p}$.

Let us now rewrite the Tail alternative in \cref{eq:tailstep3sing} in terms of $u$ as in \descref{step2}{Step 2} to get 
\[
\tail(u_{-};0,8\rho,J_2') \leq \exp\left(\frac{-2b_0^{2-p}}{2-p}\right)\frac{\epsilon M}{2^{j_{*}}}.
\]
In particular, the claim holds provided the Tail alternative from \cref{eq:tailstep3sing} is satisfied. \end{proof}
\descitem{Step 4}{step4} {\it Expanding the positivity for $u$.} Recalling the change of variables from \cref{eq:covsing}, we see that if the Tail alternatives in \descref{step2}{Step 2} and \descref{step3}{Step 3} are satisfied, then the following conclusion holds
\[
u(\cdot,t_1) \geq \sigma_0Me^{-\frac{\tau_1-\tau_0}{2-p}} =: M_0 \qquad \text{ in } B_{4\rho},
\]
where $t_1=\delta M^{2-p}\rho^{sp}(1-e^{-\tau_1})$. 

With $\nu_0$ as obtained in \cref{eq:initdatadeG}, let us first choose $\theta = \nu_0^{\frac{1}{s}}M_0^{2-p}$. We now apply \cref{lem:deGdata} (noting that $\tilde{\nu}=1$ in \cref{eq:initdatadeG} so that the smallness assumption  is trivially satisfied) with $M_0$ in place of $M$ and $\xi = 1$ over the cylinder $B_{4\rho} \times (t_1,t_1+\theta(4\rho)^{sp}]$ to see that the following holds
\begin{equation*}
    u(x,t) \geq \frac{1}{2}M_0 \geq \frac{1}{2}\sigma_0\exp\left(-\frac{2}{2-p}e^{\tau_0}\right)M \qquad \text{ for all } (x,t)\in B_{2\rho}\times(t_1,t_1+\nu_0^{\frac{1}{s}}M_0^{2-p}(4\rho)^{sp}),
\end{equation*}
provided $\tail(u_{-};0,4\rho,(t_1,t_1+\theta (4\rho)^{sp}]) \leq M_0$ is satisfied.

We now choose $\tau_0$ satisfying 
\begin{equation}\label{tauz0}
\delta M^{2-p}\rho^{sp}e^{-\tau_1} = \delta M^{2-p}\rho^{sp} - t_1 = \nu_0^{\frac{1}{s}} \sigma_0^{2-p} M^{2-p}(4\rho)^{sp}e^{-(\tau_1-\tau_0)}\iff\tau_0 = \ln\left(\frac{\delta}{4^{sp}\nu_0^{\frac{1}{s}}\sigma_0^{2-p}}\right).
\end{equation}
This determines $\tau_0$ in terms of $\alpha$ and data (note that by increasing $j_{*}$ if necessary we can always ensure that $\tau_0 > 0$).  Finally, setting $\eta := \frac{1}{2}\sigma_0\exp\left(-\frac{2}{2-p}e^{\tau_0}\right)\in (0,1)$ which depends only on $\alpha$ and data, we get that
\[
u(\cdot,t) \geq \eta M \qquad \text{ a.e. in } B_{2\rho} ,
\]
holds for all times
\begin{equation}\label{def_ve}
(1-\varepsilon)\delta M^{2-p}\rho^{sp} \leq t \leq M^{2-p}\delta\rho^{sp} \qquad \text{ where } \varepsilon = \exp(-\tau_0-2e^{\tau_0}). 
\end{equation}
\descitem{Step 5}{step5} {\it Pulling the Tail alternatives together.} We needed the following Tail assumptions going from \descref{step1}{Step 1} to \descref{step4}{Step 4} (recall $j \leq j_{*})$:
\[
\begin{array}{rcl}
\tail(u_{-};0,\rho,(0,\delta\rho^{sp}M^{2-p})) &\leq & M,\\
\tail(u_{-};0,16\rho,J_2') &\leq & \exp\left(\tfrac{-2b_0^{2-p}}{2-p}\right)\frac{\epsilon M}{2^j},\\
\tail(u_{-};0,8\rho,J_2') &\leq & \exp\left(\tfrac{-2b_0^{2-p}}{2-p}\right)\frac{\epsilon M}{2^{j_{*}}},\\
\tail(u_{-};0,4\rho,(t_1,t_1+\theta (4\rho)^{sp}]) &\leq & M_0.
\end{array}
\]

We note that in all the Tail alternatives, $\rho$ is the smallest radius and $(0,\delta\rho^{sp}M^{2-p})$ is the largest time interval. We estimate
\begin{gather*}
\exp\left(\frac{-2b_0^{2-p}}{2-p}\right)\frac{\epsilon M}{2^j} = \exp\left(-\frac{2\epsilon^{2-p}e^{\tau_0}}{2-p}\right)\frac{\epsilon M}{2^j} \geq \exp\left(-\frac{2e^{\tau_0}}{2-p}\right)\frac{\epsilon M}{2^j} = \frac{2\eta}{\sigma_0}\frac{\epsilon M}{2^j} \geq 4\eta M,\\
M_0 \geq \sigma_0\exp\left(-\frac{2}{2-p}e^{\tau_0}\right)M = 2\eta M,
\end{gather*}
where we recall $ \epsilon $ is from \cref{def_ve} and $\sigma_0 = \frac{\epsilon}{2^{j_{*}+1}}$.

Since $\eta\in (0,1)$, we see that all the Tail assumptions are satisfied if we require
\[
\tail(u_{-};0,\rho,(0,\delta\rho^{sp}M^{2-p})) \leq \eta M.
\]
\end{description}
This completes the proof of the proposition.
\end{proof}

\begin{remark}
The conclusion of \cref{prop:sinexp} can also be written without Tail alternatives as 
\[
u(\cdot,t) \geq \eta M - \textup{Tail}_{\infty}(u_{-};x_0,\rho,(t_0,t_0+\delta\rho^{sp}M^{2-p})) \qquad \text{a.e. in } B_{2\rho}(x_0),
\]
for all times
\[
   t_0+(1-\varepsilon)\delta M^{2-p}\rho^{sp} \leq t \leq t_0+M^{2-p}\delta\rho^{sp},
\]
because we are working with $u$ nonnegative. 
\end{remark}
\begin{remark}
    In the proof of H\"older regularity we use \cref{prop:sinexp} with $\al = 1/2$ so that  $\eta, \delta$ and $\epsilon \in (0,1)$ depend only on the data in the proof. 
\end{remark}
\begin{remark}
In the degenerate case the final time level depends on the final lower level $\eta M$ that is achieved during the expansion whereas in the singular case, the final time level depends on the starting level $M$. 
\end{remark}

\section{H\"older regularity for degenerate parabolic fractional \texorpdfstring{$p$}.-Laplace equations}\label{sec9}

In this section, we present the proof of \cref{holderparabolic} in the case $p>2$. The induction argument is similar to the one in \cite{cozziRegularityResultsHarnack2017} and the covering argument is taken from \cite{hwangHolderContinuityBounded2015,hwangHolderContinuityBounded2015a}. Since the complete details are not easily available in the literature, we present all the calculations for the sake of completeness.
We assume without loss of generality that $(x_0,t_0)=(0,0)$ and choose $\beta\in (0,1)$ satisfying
\begin{equation}\label{choiceofalpha}
    0<\beta<\min\left\{ \frac{sp}{p-1},\frac{sp}{p-2},\log_{C_0}\left(\frac{2}{2-\eta}\right)\right\}\quad\mbox{ and }\quad
    \int_{1}^\infty \frac{(\rho^\beta-1)^{p-1}}{\rho^{1+sp}}\,d\rho<\frac{\eta^{p-1}}{2^p\mathbf{C}_1},
\end{equation} where $\eta$ is the constant that appears  in \cref{prop:degexp} for $\al = 1/2$  and $\mathbf{C}_1$ appears in \cref{estim4}. The constant $C_0\geq 2$ will be determined in a later step depending only on the data.
The integral can be made small enough by taking small $\beta$.
Also define
\begin{align}\label{choiceofj0}
    j_0:=\left\lceil\frac{1}{sp-\beta(p-1)}\log_{C_0}\left(\frac{2^p(\mathbf{C}_{3}+2^{p-1})}{\eta^{p-1}}\right)\right\rceil,
\end{align}where $\eta$ is again the constant that appears  in \cref{prop:degexp} for $\al = 1/2$.

\begin{claim}\label{claimfinal}
	For a universal constant $C_0\gg 2$ and  $b, \eta$ and $\delta$ as obtained in \cref{prop:degexp} for $\al = 1/2$, we claim that there exist non-decreasing sequence $\{m_i\}_{i=0}^\infty$ and non-increasing sequence $\{M_i\}_{i=0}^\infty$ such that for any $i=1,2,\ldots$, we have
	\begin{equation}\label{holdercond}
		m_i\leq u \leq M_i \qquad \mbox{ in } \ Q_{R_i}(d_i):=B_{R_i}\times (-d_i\,R_i^{sp},0) = B_i \times I_i.
	\end{equation} Here, we have denoted
	\begin{equation}\label{defofL}
		\begin{array}{rcl}
			M_i-m_i&=& C_0^{-\beta i}L, \\
			R_i &:= &C_0^{1-i}R,\\
		L &:=& 2\cdot C_0^{\frac{sp}{p-1}\,j_0}\|u\|_{L^\infty(B_{C_0R}\times (-(C_0R)^{2s},0))}+\tail(u;C_0\,R,0,(-(C_0\,R)^{2s},0)),\\
		d_i & := & \frac{b^{p-2}}{(\eta\,C_0^{-\beta i}L)^{p-2}}\delta\frac{2^{p-2}}{(2C_0)^{sp}}
		\end{array}
	\end{equation}

By a slight abuse of notation, only when $i=0$, we will denote $Q_0= Q_{R_0}(d_0) := B_{C_0R}\times (-(C_0R)^{2s},0)$ instead of $B_{C_0R}\times (-(C_0R)^{sp},0)$
\end{claim}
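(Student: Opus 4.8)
The plan is to prove the claim by induction on $i$. The base case $i=0$ will be immediate: taking $M_0 := \esssup_{Q_0}u$ and $m_0 := M_0 - L$, the inequality $\essosc_{Q_0}u \le 2\|u\|_{L^\infty(Q_0)} \le C_0^{\frac{sp}{p-1}j_0}\|u\|_{L^\infty(Q_0)} \le L$ (valid since $C_0 \ge 2$ and $j_0 \ge 1$) gives $m_0 \le u \le M_0$ on $Q_0$. For the inductive step I will assume \cref{holdercond} up to level $i$, write $\omega_i := M_i - m_i = C_0^{-\beta i}L$, and reduce the whole step to producing $m_{i+1}\ge m_i$ and $M_{i+1}\le M_i$ with $M_{i+1}-m_{i+1} = C_0^{-\beta}\omega_i$ and $m_{i+1}\le u \le M_{i+1}$ on $Q_{R_{i+1}}(d_{i+1})$; this reduction factor is compatible with what the expansion of positivity yields precisely because $\beta < \log_{C_0}\tfrac{2}{2-\eta}$ in \cref{choiceofalpha} forces $C_0^{-\beta} > \tfrac{2-\eta}{2}$.

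For the step itself I would fix the midpoint $\mu_i := \tfrac{M_i+m_i}{2}$ and run the standard measure-theoretic dichotomy: by a pigeonhole over the time slices of a suitable lower sub-interval $J_i\subset I_i$ (whose placement is dictated by the intrinsic time scales $d_i$, $d_{i+1}$), either there is a time level $\bar t\in J_i$ with $|[u(\cdot,\bar t)\ge\mu_i]\cap B_{\rho_i}|\ge\tfrac12|B_{\rho_i}|$ for an appropriate radius $\rho_i\lesssim R_{i+1}$, or $|[M_i-u(\cdot,\bar t)>\omega_i/2]\cap B_{\rho_i}|>\tfrac12|B_{\rho_i}|$ for every $\bar t\in J_i$. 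In the first case I apply \cref{prop:degexp} with $\alpha=\tfrac12$ and $M=\omega_i/2$ to the supersolution $u-m_i$ (nonnegative on $B_i\times I_i$) at time $\bar t$; in the second case, to $M_i-u$ at any $\bar t\in J_i$. To propagate positivity from the small ball $B_{2\rho_i}$ up to the full ball $B_{R_{i+1}}$ I will iterate \cref{prop:degexp} a bounded number of times (depending only on $C_0$ and the data), doubling the spatial ball at each stage as in the covering scheme of \cite{hwangHolderContinuityBounded2015,hwangHolderContinuityBounded2015a}; the finitely many re-applications cost a fixed power of $\eta$, and the need to ignite this iteration from the time-fat cylinder $Q_0$ is exactly what the prefactor $C_0^{\frac{sp}{p-1}j_0}$ in $L$ and the choice of $j_0$ in \cref{choiceofj0} are set up to absorb. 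Using the explicit formulas for $R_i$, $d_i$ --- and the monotonicity of $i\mapsto d_iR_i^{sp}$, which holds because $\beta<\tfrac{sp}{p-2}$ --- I will check that $Q_{R_{i+1}}(d_{i+1})\subset Q_{R_i}(d_i)$ and that the forward-in-time interval produced by the expansion covers $I_{i+1}$. In the non-tail alternative this yields $u\ge m_i+\tfrac{\eta}{2}\omega_i$ (resp.\ $u\le M_i-\tfrac{\eta}{2}\omega_i$) a.e.\ on $Q_{R_{i+1}}(d_{i+1})$, after which I set $m_{i+1}:=M_i-C_0^{-\beta}\omega_i$, $M_{i+1}:=M_i$ (resp.\ $M_{i+1}:=m_i+C_0^{-\beta}\omega_i$, $m_{i+1}:=m_i$), and the required bounds follow from $\tfrac{\eta}{2}>1-C_0^{-\beta}>0$.

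The hard part will be disposing of the tail alternatives accompanying each invocation of \cref{prop:degexp}, i.e.\ verifying $\tail((u-m_i)_-;0,\rho_i,J)\le\eta\omega_i/2$ (and the analogue for $M_i-u$). For this I will split $\RR^N\setminus B_{\rho_i}$ into the telescoping annuli $B_{R_j}\setminus B_{R_{j+1}}$ for $j=i,i-1,\dots,0$ together with the far region $\RR^N\setminus B_{C_0R}$. On each $B_{R_j}$ the inductive bound \cref{holdercond} gives $u\ge m_j$ at the relevant times (note $I_i\subset I_j$ since $d_jR_j^{sp}$ decreases in $j$), hence $(u-m_i)_-\le m_i-m_j\le\omega_j-\omega_i=\omega_i\bigl(C_0^{\beta(i-j)}-1\bigr)$ there; summing the resulting contributions $\lesssim\omega_i^{p-1}\bigl(C_0^{\beta(i-j)}-1\bigr)^{p-1}C_0^{-sp(i-j)}$ over $j$ and comparing the geometric sum with an integral produces a constant $\mathbf{C}_1$ times $\omega_i^{p-1}\int_1^\infty\frac{(\rho^\beta-1)^{p-1}}{\rho^{1+sp}}\,d\rho$, which the second clause of \cref{choiceofalpha} makes smaller than $(\eta\omega_i/2)^{p-1}/2$. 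The far region contributes $\lesssim C_0^{-sp(i+1)}L^{p-1}$, once one writes $(u-m_i)_-\le u_-+|m_i|\lesssim u_-+L$ there and notes that the summand $\tail(u;C_0R,0,(-(C_0R)^{2s},0))$ of $L$ already dominates $\tail(u_-;C_0R,\cdot)$; this is $\le(\eta\omega_i/2)^{p-1}/2$ once $C_0$ is large and $R$ small (the quantitative smallness of $R$, depending on the data and $\|u\|_{L^\infty}$, being the remaining place where $j_0$ and the prefactor in $L$ are used). Combining the two contributions contradicts every tail alternative, so only the pointwise alternative can hold at each stage and the induction closes. Besides this tail bookkeeping, the other delicate point is the precise matching of the $i$-dependent intrinsic time scales $d_i$ with the intervals delivered by \cref{prop:degexp} and with the transition from $Q_0$; everything else is the routine De Giorgi oscillation-decay machinery, as in \cite{cozziRegularityResultsHarnack2017}.
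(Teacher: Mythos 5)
Your overall architecture — induction on the oscillation, a measure dichotomy at a time level, one application of the expansion of positivity \cref{prop:degexp} to whichever of $u-m_i$, $M_i-u$ has the good measure bound, and a tail estimate obtained by telescoping the induction hypothesis over the annuli $B_{R_j}\setminus B_{R_{j+1}}$ and comparing the resulting sum with $\int_1^\infty(\rho^\beta-1)^{p-1}\rho^{-1-sp}\,d\rho$ — is exactly the paper's. The annular part of your tail estimate and the closing arithmetic $\tfrac{\eta}{2}>1-C_0^{-\beta}$ are correct.

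The genuine gap is in your treatment of the far region $\RR^N\setminus B_{C_0R}$ of the tail. You claim its contribution is absorbed ``once $C_0$ is large and $R$ small.'' The $R$-smallness mechanism does not exist: the far contribution to $\tail(\cdot;R_i,0,I_i)^{p-1}$ is $R_i^{sp}\cdot(\mathbf{C}_3+2^{p-1})L^{p-1}(C_0R)^{-sp}=(\mathbf{C}_3+2^{p-1})L^{p-1}C_0^{-isp}$, in which $R$ cancels, while the target $(\eta\,\omega_i/2)^{p-1}\sim\eta^{p-1}C_0^{-\beta i(p-1)}L^{p-1}$ is likewise $R$-free; shrinking $R$ changes nothing. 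The relevant ratio is $C_0^{-i(sp-\beta(p-1))}\eta^{1-p}$, which is small only when $i\geq j_0$ with $j_0$ as in \cref{choiceofj0} — and this is precisely why the paper inflates $L$ by the factor $C_0^{\frac{sp}{p-1}j_0}$: it makes $M_i=\tfrac12 C_0^{-\beta i}L\geq\|u\|_{L^\infty(Q_0)}$ for all $i\leq j_0$, so that levels $0,1,\dots,j_0$ of \cref{holdercond} hold \emph{trivially} and the genuine inductive step is only ever run for $j\geq j_0$, where the far-region bound is available. Your proposal runs the inductive step from $i=0$ and attributes the role of $j_0$ and the prefactor in $L$ to a smallness condition on $R$; as written, the induction cannot close for the first $j_0$ steps. (One could alternatively enlarge $C_0$ so that $C_0^{-sp}\leq\eta^{p-1}/(2^p(\mathbf{C}_3+2^{p-1}))$ and dispense with $j_0$ for this purpose, but you neither state nor verify this, and it is not the route compatible with the definitions in \cref{defofL}.)

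A secondary point: your plan to reach $B_{R_{i+1}}$ by iterating \cref{prop:degexp} ``a bounded number of times depending on $C_0$'' is unnecessary and risks a circular choice of constants, since $k$ iterations degrade the lower bound to $\eta^{k+1}M$, which must then feed back into the constraint $\beta<\log_{C_0}\bigl(\tfrac{2}{2-\eta^{k+1}}\bigr)$ while $C_0$ itself is constrained in terms of $\beta$ and $\sigma'$. The paper applies \cref{prop:degexp} exactly once with starting radius $\rho=C_0^{-j}R/2$, so that the doubled ball $B_{2\rho}$ is already $B_{R_{j+1}}$; you should do the same, and then the only remaining work is the time-interval matching via the choice of $s_j$ and the condition $C_0>\bigl(\tfrac{1}{1-\sigma'}\bigr)^{1/(\beta(2-p)+sp)}$, which you acknowledge but do not carry out.
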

\begin{proof}[Proof of \cref{claimfinal}]
	The proof will proceed in two steps, first we show \cref{holdercond} holds for $i=0,1,2,\ldots,j_0$ and then use induction to obtain \cref{holdercond} for all $i>j_0$.
	\begin{description}
		\descitem{Step 1}{step11}
		Without loss of generality, we can assume $Q_{R_1}(d_1) \subset B_{C_0R}\times (-(C_0R)^{2s},0)$, since otherwise, we would have 
		\[
		L^{p-2} \leq R^{s(p-2)} \frac{b^{p-2}\delta}{C_0^{2s}(\eta C_0^{-\beta})^{p-2}}\frac{2^{p-2}}{(2C_0)^{sp}},
		\]
		which implies oscillation is comparable to the radius. 
		
		It is also easy to see that $Q_{R_{i+1}}(d_{i+1}) \subseteq Q_{R_i}(d_i)$ for $i = 1,2,\ldots$, since from \cref{choiceofalpha}, we have $\beta (p-2)-sp < 0$ which implies $d_{i+1} R_{i+1}^{sp} \leq d_iR_i^{sp}$. 
		\descitem{Step 2}{step111}
		 For $i=0,1,\ldots,j_0$, let us define $m_i:=\tfrac{-C_0^{-\beta i}L}{2}$ and $M_i:=\tfrac{C_0^{-\beta i}L}{2}$.
		 From \descref{step11}{Step 1}, we have $Q_{R_i}(d_i)\subset Q_{0}$ holds and thus we have
		\begin{equation*}
			\begin{array}{rcl}
				\|u\|_{L^\infty(Q_{R_i}(d_i))}\leq \|u\|_{L^\infty(Q_{0})}=\tfrac{C_0^{-\beta i}}{2}\left(2C_0^{\frac{sp}{p-1}j_0} \|u\|_{L^\infty(Q_{0})}\right)\left(\frac{C_0^{\beta i}}{C_0^{\frac{sp}{p-1}j_0}}\right)\overset{\cref{choiceofalpha}}{\leq} M_i.
			\end{array}
		\end{equation*}
		\descitem{Step 3}{step22} For some $j\geq j_0$, we suppose that the sequence $M_i$ and $m_i$ have been defined for $i=1,2,\ldots,j$. Inductively, we will construct $m_{j+1}$ and $M_{j+1}$ so that \cref{holdercond} holds.
		Define the function $v:=u - \tfrac{(M_j+m_j)}{2}$, then
	   using  monotonicity of $M_j$ and $m_j$, we see that the following holds $$(M_j-m_j)+2m_0\leq M_j+m_j\leq 2M_0-(M_j-m_j).$$ Recalling $m_0 = -\tfrac{L}{2}$ and $M_0= \tfrac{L}{2}$ along with the choice $m_i:=\tfrac{-C_0^{-\beta i}L}{2}$ and $M_i:=\tfrac{C_0^{-\beta i}L}{2}$ gives
	   \begin{equation}\label{Mmest1}
	   	-(1-C_0^{-\beta\,j})L\leq M_j+m_j\leq (1-C_0^{-\beta\,j})L.
	   \end{equation}
   Thus, we have
   \begin{equation}\label{estim1}
   	\left(\tfrac{C_0^{-\beta j}L}{2}\pm v\right)_-^{p-1}=\left(\tfrac{C_0^{-\beta j}L}{2}\pm u \mp \tfrac{M_j+m_j}{2}\right)_-^{p-1}\overset{\cref{Mmest1}}{\leq} \left(|u|+\tfrac{L}{2}\right)^{p-1}\leq 2^{p-1}|u|^{p-1}+L^{p-1}.
   \end{equation}

\descitem{Step 4}{step224}
Recalling the notation from \cref{holdercond}, we estimate 
\begin{equation}\label{tailest1}
	\begin{array}{rcl}
	&&\hspace{-2cm}\tail\biggr(\big(\tfrac{C_0^{-\beta j}L}{2}\pm v\big)_{-};C_0^{1-j}R,0,I_j\biggr)^{p-1}\\
	&=& C_0^{(1-j)sp}R^{sp}\, \esssup\limits_{I_{j}}\Bigg[\int_{B_0\setminus B_{j}}\frac{\left(\tfrac{C_0^{-\beta j}L}{2}\pm v\right)_-^{p-1}}{|x|^{N+ps}}\,dx+\int_{\RR^N\setminus B_0}\frac{\left(\tfrac{C_0^{-\beta j}L}{2}\pm v\right)_-^{p-1}}{|x|^{N+ps}}\,dx\Bigg]\\
	&\stackrel{\cref{estim1}}{\leq}& C_0^{(1-j)sp}R^{sp}\,\Bigg[\esssup\limits_{I_{j}}\int_{B_0\setminus B_{j}}\frac{\left(\tfrac{C_0^{-\beta j}L}{2}\pm v\right)_-^{p-1}}{|x|^{N+ps}}\,dx\\
	&& +2^{p-1}\esssup\limits_{I_{0}}\int_{\RR^N\setminus B_{0}}\frac{|u|^{p-1}}{|x|^{N+ps}}\,dx+L^{p-1}\int_{\RR^N\setminus B_{0}}\frac{1}{|x|^{N+ps}}\,dx\Bigg]\\
	&\stackrel{\cref{defofL}}{\leq}& C_0^{(1-j)sp}R^{sp}\,\Bigg[\underbrace{\esssup\limits_{I_{j}}\int_{B_0\setminus B_{j}}\frac{\left(\tfrac{C_0^{-\beta j}L}{2}\pm v\right)_-^{p-1}}{|x|^{N+ps}}\,dx}_{:=J} +2^{p-1}\frac{L^{p-1}}{(C_0\,R)^{sp}} + \mathbf{C}_{3} \frac{L^{p-1}}{(C_0\,R)^{sp}}\Bigg].
	\end{array}
\end{equation}
To estimate $J$, for given $x \in B_0 \setminus B_j$,  there is $l\in \{0,1,2,\ldots,j-1\}$ such that $x\in B_{l-1}\setminus B_{l}$ so that by the monotonicity of the sequence $m_i$ and by the induction hypothesis, for a.e. $t\in I_j$,  we have:
\begin{equation}\label{estim2}
	\begin{minipage}{0.4\textwidth}
	$\begin{array}{rcl}
		v(x,t) & \geq & -(M_l-m_l) + \tfrac{M_j-m_j}{2}\\
		& = & -\left[C_0^{-\beta l}  - \tfrac{C_0^{-\beta j}}{2}\right]L\\
		& \geq & -\left[\left(\tfrac{|x|}{C_0R}\right)^\beta - \tfrac{C_0^{-\beta j}}{2}\right]L
	\end{array}$
	\end{minipage}
\begin{minipage}{0.4\textwidth}
	$\begin{array}{rcl}
	v(x,t) & \leq & (M_l-m_l) -\tfrac{M_j-m_j}{2}\\
	& = & \left[C_0^{-\beta l}  - \tfrac{C_0^{-\beta j}}{2}\right]L\\
	&\leq& \left[\left(\tfrac{|x|}{C_0R}\right)^\beta - \tfrac{C_0^{-\beta j}}{2}\right]L
	\end{array}$
\end{minipage}
\end{equation}
From \cref{estim2}, we get
\begin{align*}
	\left(\tfrac{C_0^{-\beta j}L}{2}\pm v(x,t)\right)_-^{p-1}\leq  \left[\left(\frac{|x|}{C_0R}\right)^{\beta}-C_0^{-\beta j}\right]^{p-1} L^{p-1}
\end{align*}
We use this to estimate 
\begin{equation}\label{estim4}
	\begin{array}{rcl}
		J&\leq &C_0^{-\beta j(p-1)}L^{p-1}\esssup_{I_j}\int_{\RR^N\setminus B_{C_0^{1-j}R}}\frac{\left(\left(\frac{|x|}{C_0^{1-j}R}\right)^\beta-1\right)^{p-1}}{|x|^{N+sp}}\,dx\\
		&=&\frac{C_0^{-\beta j(p-1)}L^{p-1}C_0^{(j-1)sp}}{R^{sp}}\int_{\RR^N\setminus B_{1}}\frac{\left(|y|^\beta-1\right)^{p-1}}{|y|^{N+sp}}\,dy\\
		&\leq& \mathbf{C}_{1}\frac{C_0^{-\beta j(p-1)}L^{p-1}C_0^{(j-1)sp}}{R^{sp}}\int_{1}^\infty\frac{\left(\rho^\beta-1\right)^{p-1}}{\rho^{1+sp}}\,d\rho.
	\end{array}
\end{equation}
Substituting \cref{estim4} into \cref{tailest1}, we obtain
\begin{multline}\label{taildecay}
	\tail\left(\left(\tfrac{C_0^{-\beta j}L}{2}\pm v\right)_{-};C_0^{1-j}R,0,I_j\right)^{p-1}\\
	\begin{array}{rcl}
	&\leq & C_0^{(1-j)sp}R^{sp}\,\left[\mathbf{C}_{1}\frac{C_0^{-\beta j(p-1)}L^{p-1}C_0^{(j-1)ps}}{R^{sp}}\int_{1}^\infty\frac{\left(\rho^\beta-1\right)^{p-1}}{\rho^{1+ps}}\,d\rho +2^{p-1}\frac{L^{p-1}}{(C_0\,R)^{sp}}+\mathbf{C}_{3}\frac{L^{p-1}}{(C_0\,R)^{sp}}\right]\\
	&\leq & C_0^{-\beta j(p-1)}L^{p-1}\left[ \mathbf{C}_{1} \int_{1}^\infty\frac{(\rho^\beta-1)^{p-1}}{\rho^{1+ps}}\,d\rho+  ( \mathbf{C}_{3}+2^{p-1})\,C_0^{-(sp-\beta(p-1))j} \right]\\
	&\overset{\redlabel{7.10a}{a}}{\leq}& \frac{C_0^{-\beta j(p-1)}L^{p-1} \eta^{p-1}}{2^{p-1}},
	\end{array}
\end{multline} where \redref{7.10a}{a} follows from \cref{choiceofalpha}, \cref{choiceofj0} and the fact that $j\geq j_0$.

\descitem{Step 5}{step225}
Now, one of the following two alternatives must hold:
\begin{align}
	\left|\{v(\cdot,s_j)\geq 0\}\cap B_{C_0^{-j}R/2}\right|\geq \frac 12 \left|B_{C_0^{-j}R/2}\right|, \label{alt1}\\
	\left|\{v(\cdot,s_j)\geq 0\}\cap B_{C_0^{-j}R/2}\right|< \frac 12 \left|B_{C_0^{-j}R/2}\right|,\label{alt2}
\end{align}
where we have set
\begin{align*}
	s_j:=
		-\frac{b^{p-2}}{(\eta\,C_0^{-\beta j}L)^{p-2}}\delta R_j^{sp}\frac{2^{p-2}}{(2C_0)^{sp}}.
\end{align*}

Without loss of generality, we assume that \cref{alt1} holds, noting that if \cref{alt2} holds, analogous conclusion follows with $v$ replaced by $-v$. 
\descitem{Step 6}{step226}
Now denote $w=\tfrac{C_0^{-\beta j}L}{2}+v$, then \cref{alt1} becomes
\begin{align*}
	\left|\left\{w(\cdot,s_j)\geq \tfrac{C_0^{-\beta j}L}{2}\right\}\cap B_{C_0^{-j}R/2}\right|\geq \frac 12 \left|B_{C_0^{-j}R/2}\right|.
\end{align*}
Moreover from the induction hypothesis, we see that $w\geq 0$ in $Q_{R_j}(d_j)$  because
\begin{align*}
	w = v + \tfrac{C_0^{-\beta j}L}{2}= u - \tfrac{M_j+m_j}{2} + \tfrac{C_0^{-\beta j}L}{2}\overset{\cref{defofL}}{=} u - \tfrac{M_j+m_j}{2} + \tfrac{M_j-m_j}{2}= u - m_j\geq 0.
\end{align*}
We see that since \cref{taildecay} holds, the Tail alternative in \cref{prop:degexp} is satisfied. Therefore, we get
\begin{align}\label{redofosc}
	w\geq  \frac{\eta C_0^{-\beta j}L}{2} \qquad \mbox{ a.e. }(x,t)\in Q_{R_{j+1}}(d_{j+1}),
\end{align}
provided $C_0 > \left(\tfrac{1}{1-\sigma'}\right)^{\frac{1}{\beta(2-p)+sp}}$ where $\sigma'\in(0,1)$ is as obtained in \cref{prop:degexp} for $\al = 1/2$. Recalling \cref{choiceofalpha} we enforce such a universal choice of $C_0 \gg 2$. 
We calculate
\begin{equation*}
	\begin{array}{rcl}
		u(x,t)&=&\tfrac{M_j+m_j}{2} + v=\tfrac{M_j+m_j}{2} + w - \tfrac{C_0^{-\beta j}L}{2}\\
		&\overset{\cref{redofosc}}{\geq}& \tfrac{M_j+m_j}{2} - \tfrac{C_0^{-\beta j}L}{2}(1-\eta)\\
		&=& M_j - \tfrac{M_j-m_j}{2} - \tfrac{C_0^{-\beta j}L}{2}(1-\eta)\\
		&\overset{\redlabel{ind}{a}}{=}& M_j - \tfrac{C_0^{-\beta j}L}{2}(2-\eta)\\
		&\overset{\cref{choiceofalpha}}{\geq}& M_j - C_0^{-\beta(j+1)}L,
	\end{array}
\end{equation*} where \redref{ind}{a} follows from the induction hypothesis.
Thus if we define $m_{j+1}:=M_j - C_0^{-\beta(j+1)}L$ and $M_{j+1}:=M_j$ then the inductive process in \descref{step22}{Step 3} is completed. 
	\end{description} This completes the proof of the claim.
\end{proof}

A consequence of \cref{claimfinal} is the following oscillation decay:
\begin{equation}\label{oscdecays}
    \essosc_{Q_{R_i}(d_i)}\, u:=\esssup_{Q_{R_i}(d_i)}\,u - \essinf_{Q_{R_i}(d_i)}\, u \leq C_0^{-\beta i}L.
\end{equation}

\begin{proof}[Proof of \cref{holderparabolic}]
	
	With notation as in \cref{claimfinal}, let us take $Q_0:=B_{C_0R}\times (-(C_0R)^{2s},0)$ and define $L$, $R_i$ and $d_i$ as \cref{defofL}. 
	
	Consider any two points 
Let $(x_1,t_1) ,(x_2,t_2)\in  B_R(0)\times (-d_1^{2-p}R^{sp},0)$ such that $x_1\neq x_2$ and $t_1\neq t_2$, then there exist non-negative integers $n$ and $m$ such that
\begin{equation}\label{holdest1}
    R_{n+1}<|x_1-x_2|\leq R_n, \qquad \text{and} \qquad d_{m+1}R_{m+1}^{sp}<|t_1-t_2|\leq d_{m}R_{m}^{sp}.
\end{equation}
%
As a result, with $k := \max\{n,m\}$, we obtain
\begin{align}\label{holdest0}
    |u(x_1,t_1)-u(x_2,t_2)|\leq \essosc_{Q_{r_k}(d_k)}u \overset{\cref{{oscdecays}}}{\leq} \max\{C_0^{-\beta n},C_0^{-\beta m}\}L.
\end{align}

From the first inequality in \cref{holdest1}, we deduce
\begin{equation}\label{holdest2.5}
    \frac{|x_1-x_2|}{R}>C_0^{-n} \qquad \Longrightarrow \qquad \left(\frac{|x_1-x_2|}{R}\right)^\beta\,L>C_0^{-\beta n}L.
\end{equation} 
On the other hand, from the second inequality in \cref{holdest1}, we get
\begin{equation}\label{holdest2.5t}
	\mathbf{C}_4\,L^{\frac{p-2}{sp}}\frac{|t_1-t_2|^{\frac{1}{sp}}}{R}>\left(C_0^{\beta(m+1)}\right)^{\frac{p-2}{sp}}\,C_{0}^{-m} \geq C_0^{-m}
\end{equation} 
where 
\begin{equation}\label{eq:C4}
\mathbf{C}_4 = \lbr \frac{\eta^{p-2}}{b^{p-2}\delta}\frac{(2C_0)^{sp}}{2^{p-2}}\rbr^{\frac{1}{sp}}    
\end{equation}
is a constant depending only on data and the last inequality in \cref{holdest2.5t} follows since $p>2$ which implies the following $\left(C_0^{-\beta(m+1)}\right)^{\frac{2-p}{sp}}\geq 1$ holds as $C_0 > 1$. 

%
%
The proof of the theorem follows from substituting  \cref{holdest2.5} and  \cref{holdest2.5t} into \cref{holdest0}.
\end{proof}


\section{H\"older regularity for singular parabolic fractional \texorpdfstring{$p$}.-Laplace equations}\label{sec10}

In this section, we present the proof of \cref{holderparabolicsin} in the case $p<2$. The induction argument is similar to the degenerate proof from \cref{sec9}. Since the complete details are not easily available in the literature, we present all the calculations for the sake of completeness.
We assume without loss of generality that $(x_0,t_0)=(0,0)$ and choose $\beta\in (0,1)$ satisfying
\begin{equation}\label{choiceofalphasin}
	0<\beta<\min\left\{sp,\frac{sp}{2-p},\log_{C_0}\left(\frac{2}{2-\eta}\right)\right\}\quad\mbox{ and }\quad
	\int_{1}^\infty \frac{(\rho^\frac{\beta}{\Gamma}-1)^{p-1}}{\rho^{1+sp}}\,d\rho<\frac{\eta^{p-1}}{2^p\mathbf{C}_1},
\end{equation} where $\Gamma := \tfrac{\beta(p-2)}{sp}+1 > 0$, $\eta$ is the constant that appears  in \cref{prop:sinexp} for $\al = 1/2$ and $\mathbf{C}_1$ appears in \cref{estim4sin}. The constant $C_0 \geq 2$ will be determined in a later step depending only on the data
The integral can be made small enough by taking small $\beta$.
We further fix $\ve_0 > 0$ and
define
\begin{align}\label{choiceofj0sin}
	j_0:=\left\lceil\frac{1}{(sp-\beta)}\log_{C_0}\left(\frac{2^pC_0^{\ve_0sp}(\mathbf{C}_{3}+2^{p-1})}{\eta}\right)\right\rceil,
\end{align}where $\eta$ is the constant that appears  in \cref{prop:sinexp} for $\al = 1/2$. 

\begin{claim}\label{claimfinalsin}
	Let $R \leq 1$ be fixed, then for a universal constant $C_0\gg 2$ and  $b, \eta$ as obtained in \cref{prop:sinexp}, we claim that there exist non-decreasing sequence $\{m_i\}_{i=0}^\infty$ and non-increasing sequence $\{M_i\}_{i=0}^\infty$ such that for any $i=1,2,\ldots$, we have
	\begin{equation}\label{holdercondsin}
		m_i\leq u \leq M_i \qquad \mbox{ in } \ Q_{R_i}(d_i):=B_{d_iR_i}\times (-R_i^{sp},0) = B_i \times I_i.
	\end{equation} Here, we have denoted
	\begin{equation}\label{defofLsin}
		\begin{array}{rcl}
			M_i-m_i&=& C_0^{-\beta  i}L \overset{\cref{choiceofalphasin}}{\searrow} 0, \\
			R_i &:= &C_0^{1-i}R,\\
			L &:=& 2\cdot C_0^{-\beta  j_0}\|u\|_{L^\infty(B_{(C_0R)^{1-\ve_o}}\times (-(C_0R)^{sp},0))}+\tail(u;(C_0\,R)^{1-\ve_o},0,(-(C_0\,R)^{sp},0))+1,\\
			d_i & := & (\eta\,C_0^{-\beta i}L)^{\frac{p-2}{sp}}\delta
		\end{array}
	\end{equation}
	where $\delta>0$, $b >0$ and $\eta \in (0,1)$ appears in \cref{prop:sinexp} for $p<2$.
	
	By a slight abuse of notation, only when $i=0$, we will denote $Q_0= Q_{R_0}(d_0) := B_{(C_0R)^{1-\ve_o}}\times (-(C_0R)^{sp},0) = B_0\times I_0$.
\end{claim}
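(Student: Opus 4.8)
\textbf{Proof strategy for \cref{claimfinalsin}.} The plan is to run exactly the two–stage induction used for \cref{claimfinal}, with the single structural change dictated by the singular regime $1<p<2$: now the intrinsic scaling is carried by the \emph{space} variable, so the cylinders $Q_{R_i}(d_i)=B_{d_iR_i}\times(-R_i^{sp},0)$ have shrinking ball radii $d_iR_i$ instead of shrinking time lengths, and one invokes \cref{prop:sinexp} in place of \cref{prop:degexp}. Concretely, I would first establish \cref{holdercondsin} for the initial indices $i=0,1,\dots,j_0$ directly from the $L^\infty$ bound, and then prove it for all $i>j_0$ by induction, each inductive step consisting of a tail estimate followed by one application of the singular expansion of positivity.

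\textbf{Base cases.} As in the first step of the proof of \cref{claimfinal}, I would first reduce to the case $Q_{R_1}(d_1)\subset Q_0$, since otherwise the oscillation is already comparable to the radius and there is nothing to prove, and I would check the nesting $Q_{R_{i+1}}(d_{i+1})\subseteq Q_{R_i}(d_i)$, which now follows from $\beta<\tfrac{sp}{2-p}$ — precisely the condition ensuring $\Gamma=\tfrac{\beta(p-2)}{sp}+1>0$ and hence $d_{i+1}R_{i+1}\le d_iR_i$. Then I would put $m_i:=-\tfrac{C_0^{-\beta i}L}{2}$, $M_i:=\tfrac{C_0^{-\beta i}L}{2}$ for $0\le i\le j_0$; since $Q_{R_i}(d_i)\subset Q_0$, the definition of $L$ in \cref{defofLsin} together with the choice of $j_0$ in \cref{choiceofj0sin} gives $\|u\|_{L^\infty(Q_{R_i}(d_i))}\le M_i$, as required.

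\textbf{Inductive step.} Given $m_0\le\dots\le m_j$, $M_0\ge\dots\ge M_j$ satisfying \cref{holdercondsin} for some $j\ge j_0$, I would set $v:=u-\tfrac{M_j+m_j}{2}$; monotonicity forces $|M_j+m_j|\le(1-C_0^{-\beta j})L$, hence $\bigl(\tfrac{C_0^{-\beta j}L}{2}\pm v\bigr)_-^{p-1}\le 2^{p-1}|u|^{p-1}+L^{p-1}$. The heart of the step is the tail estimate for $\bigl(\tfrac{C_0^{-\beta j}L}{2}\pm v\bigr)_-$ over $B_j=B_{d_jR_j}$: I would split $\RR^N\setminus B_j$ into $B_0\setminus B_j$ and $\RR^N\setminus B_0$, bound the far part by $\tail(u;\ldots)$ and hence by $L$ via \cref{defofLsin}, and on each dyadic ring of the annulus use the induction hypothesis, which on $B_{l-1}\setminus B_l$ gives a bound for $v$ of the form $v\ge-\bigl[(|x|/R_0)^{\beta/\Gamma}-\tfrac{C_0^{-\beta j}}{2}\bigr]L$ — the exponent $\beta/\Gamma$ (rather than $\beta$) arising exactly because in the singular scaling the ring radii $d_lR_l$ decay like $C_0^{-\Gamma l}$. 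Passing to the integral reproduces \cref{estim4sin} and yields a tail bounded by $C_0^{-\beta j(p-1)}L^{p-1}$ times $\mathbf{C}_1\int_1^\infty\tfrac{(\rho^{\beta/\Gamma}-1)^{p-1}}{\rho^{1+sp}}\,d\rho+(\mathbf{C}_3+2^{p-1})C_0^{-(sp-\beta)j}$, which is $\le\tfrac{\eta^{p-1}}{2^{p-1}}C_0^{-\beta j(p-1)}L^{p-1}$ by \cref{choiceofalphasin}, \cref{choiceofj0sin} and $j\ge j_0$. Next I would examine the two measure alternatives at the relevant time slice and, WLOG, take the one favourable for $v$ (otherwise replace $v$ by $-v$); then $w:=\tfrac{C_0^{-\beta j}L}{2}+v=u-m_j\ge 0$ meets the hypothesis of \cref{prop:sinexp} with $\alpha=\tfrac12$, and since its tail alternative has just been excluded, $w\ge\tfrac{\eta C_0^{-\beta j}L}{2}$ on $Q_{R_{j+1}}(d_{j+1})$. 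Translating back, $u\ge M_j-\tfrac{C_0^{-\beta j}L}{2}(2-\eta)\ge M_j-C_0^{-\beta(j+1)}L$ by \cref{choiceofalphasin}, so I would set $m_{j+1}:=M_j-C_0^{-\beta(j+1)}L$, $M_{j+1}:=M_j$, which closes the induction.

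\textbf{Main obstacle.} The hard part is the intrinsic-scaling bookkeeping in the final step of the induction — the mirror image of the difficulty in the degenerate case, but with space and time interchanged. I would need to fix $C_0$ large (depending only on $\ve_0$, $\delta$, $b$, $\eta$ and the data) so that the positivity cylinder furnished by \cref{prop:sinexp} — of spatial radius $2\rho=d_{j+1}R_{j+1}$ and time window $(1-\ve)\delta M^{2-p}\rho^{sp}\le t\le\delta M^{2-p}\rho^{sp}$ — genuinely contains $Q_{R_{j+1}}(d_{j+1})$ in \emph{both} variables, and at the same time check that the $\Gamma$-distortion of the annuli in the tail estimate is still absorbed by the admissible range of $\beta$ fixed in \cref{choiceofalphasin}.
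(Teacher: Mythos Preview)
Your proposal is correct and follows essentially the same approach as the paper: the two-stage induction, the switch of intrinsic scaling to the space variable with the $\beta/\Gamma$ exponent on the annuli (since $d_lR_l\sim C_0^{-\Gamma l}$), the tail split and smallness via \cref{choiceofalphasin}--\cref{choiceofj0sin}, and the application of \cref{prop:sinexp} at the measure-alternative time slice are exactly the paper's Steps~1--6. The only points you leave implicit that the paper spells out are the use of $L\ge 1$, $R\le 1$, $\delta\le 1$ to control the far-tail contribution $(d_jR_j)^{sp}/(C_0R)^{sp(1-\ve_0)}$, and the two explicit lower bounds on $C_0$ (one ensuring $s_j\in(-R_j^{sp},0)$, the other ensuring the positivity time window of \cref{prop:sinexp} covers $I_{j+1}$), but you correctly flag this bookkeeping as the main obstacle.
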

\begin{proof}[Proof of \cref{claimfinalsin}]
	The proof will proceed in two steps, first we show \cref{holdercondsin} holds for $i=0,1,2,\ldots,j_0$ and then use induction to obtain \cref{holdercondsin} for all $i>j_0$.
	\begin{description}
		\descitem{Step 1}{step11sin}
		Without loss of generality, we can assume $Q_{R_1}(d_1) \subset B_{(C_0R)^{1-\ve_o}}\times (-(C_0R)^{sp},0)$, since otherwise, we would have 
		\[
		L^{\frac{2-p}{sp}} \leq R^{\ve_o} \frac{(\eta C_0^{-\beta})^{\frac{p-2}{sp}}}{C_0^{1-\ve_o}},
		\]
		which implies oscillation is comparable to the radius. 
		
		It is also easy to see that $Q_{R_{i+1}}(d_{i+1}) \subseteq Q_{R_i}(d_i)$ for $i = 1,2,\ldots$, since from \cref{choiceofalphasin}, we have $\beta (2-p)-sp < 0$ which implies $d_{i+1} R_{i+1} \leq d_iR_i$. 
		\descitem{Step 2}{step111sin}
		For $i=0,1,\ldots,j_0$, let us define $m_i:=\tfrac{-C_0^{-\beta  i}L}{2}$ and $M_i:=\tfrac{C_0^{-\beta  i}L}{2}$.
		From \descref{step11sin}{Step 1}, we have $Q_{R_i}(d_i)\subset Q_{0}$ holds and thus we have
		\begin{equation*}
			\begin{array}{rcl}
				\|u\|_{L^\infty(Q_{R_i}(d_i))}\leq \|u\|_{L^\infty(Q_{0})}=\tfrac{C_0^{-\beta  i}}{2}\left(2C_0^{-\beta  j_0} \|u\|_{L^\infty(Q_{0})}\right)\left(\frac{C_0^{\beta  i}}{C_0^{-\beta  j_0}}\right)\overset{\cref{choiceofalphasin}}{\leq} M_i.
			\end{array}
		\end{equation*}
		\descitem{Step 3}{step22sin} For some $j\geq j_0$, we suppose that the sequence $M_i$ and $m_i$ have been defined for $i=1,2,\ldots,j$. Inductively, we will construct $m_{j+1}$ and $M_{j+1}$ so that \cref{holdercondsin} holds.
		Define the function $v:=u - \tfrac{(M_j+m_j)}{2}$, then
		using  monotonicity of $M_j$ and $m_j$, we see that the following holds $$(M_j-m_j)+2m_0\leq M_j+m_j\leq 2M_0-(M_j-m_j).$$ Recalling $m_0 = -\tfrac{L}{2}$ and $M_0= \tfrac{L}{2}$ along with the choice $m_i:=\tfrac{-C_0^{-\beta  i}L}{2}$ and $M_i:=\tfrac{C_0^{-\beta  i}L}{2}$ gives
		\begin{equation}\label{Mmest1sin}
			-(1-C_0^{-\beta  j})L\leq M_j+m_j\leq (1-C_0^{-\beta  j})L.
		\end{equation}
		Thus, we have
		\begin{equation}\label{estim1sin}
			\left(\tfrac{C_0^{-\beta  j}L}{2}\pm v\right)_-^{p-1}=\left(\tfrac{C_0^{-\beta  j}L}{2}\pm u \mp \tfrac{M_j+m_j}{2}\right)_-^{p-1}\overset{\cref{Mmest1sin}}{\leq} \left(|u|+\tfrac{L}{2}\right)^{p-1}\leq 2^{p-1}|u|^{p-1}+L^{p-1}.
		\end{equation}
		\descitem{Step 4}{step224sin}
		Recalling the notation from \cref{holdercondsin}, we estimate 
		\begin{equation}\label{tailest1sin}
			\begin{array}{rcl}
				&&\hspace{-2cm}\tail\biggr(\big(\tfrac{C_0^{-\beta  j}L}{2}\pm v\big)_{-};B_j,0,I_j\biggr)^{p-1}\\
				&=& (d_jR_j)^{sp}\, \esssup\limits_{I_{j}}\Bigg[\int_{B_0\setminus B_{j}}\frac{\left(\tfrac{C_0^{-\beta  j}L}{2}\pm v\right)_-^{p-1}}{|x|^{N+ps}}\,dx+\int_{\RR^N\setminus B_0}\frac{\left(\tfrac{C_0^{-\beta  j}L}{2}\pm v\right)_-^{p-1}}{|x|^{N+ps}}\,dx\Bigg]\\
				&\stackrel{\cref{estim1sin}}{\leq}& (d_jR_j)^{sp}\,\Bigg[\esssup\limits_{I_{j}}\int_{B_0\setminus B_{j}}\frac{\left(\tfrac{C_0^{-\beta  j}L}{2}\pm v\right)_-^{p-1}}{|x|^{N+ps}}\,dx\\
				&& +2^{p-1}\esssup\limits_{I_{0}}\int_{\RR^N\setminus B_{0}}\frac{|u|^{p-1}}{|x|^{N+ps}}\,dx+L^{p-1}\int_{\RR^N\setminus B_{0}}\frac{1}{|x|^{N+ps}}\,dx\Bigg]\\
				&\stackrel{\cref{defofLsin}}{\leq}& (d_jR_j)^{sp}\,\Bigg[\underbrace{\esssup\limits_{I_{j}}\int_{B_0\setminus B_{j}}\frac{\left(\tfrac{C_0^{-\beta  j}L}{2}\pm v\right)_-^{p-1}}{|x|^{N+ps}}\,dx}_{:=J} +2^{p-1}\frac{L^{p-1}}{(C_0\,R)^{sp(1-\ve_0)}} \\
				&&+ \mathbf{C}_{3} \frac{L^{p-1}}{(C_0\,R)^{sp(1-\ve_0)}}\Bigg].
			\end{array}
		\end{equation}
		To estimate $J$, for given $x \in B_0 \setminus B_j$,  there is $l\in \{0,1,2,\ldots,j-1\}$ such that $x\in B_{l-1}\setminus B_{l}$ which implies $|x| \geq d_{l}R_{l}$ which is equivalent to the condition $\frac{|x|}{C_0R} \frac{(\eta L)^{\frac{2-p}{sp}}}{\delta} \geq C_0^{-l\Gamma}$
		where $\Gamma = \tfrac{\beta(p-2)}{sp}+1 \geq 0$ (by \cref{choiceofalphasin})
		so that by the monotonicity of the sequence $m_i$ and by the induction hypothesis, for a.e. $t\in I_j$,  we have:
		\begin{equation}\label{estim2sin}
				\begin{array}{rcl}
					v(x,t) & \geq & -(M_l-m_l) + \tfrac{M_j-m_j}{2}\\
					& = & -\left[C_0^{-\beta  l}  - \tfrac{C_0^{-\beta  j}}{2}\right]L\\
					& \geq & -\left[\left(\frac{|x|}{C_0R} \frac{(\eta L)^{\frac{2-p}{sp}}}{\delta}\right)^{\frac{\beta}{\Gamma}} - \tfrac{C_0^{-\beta  j}}{2}\right]L
				\end{array}
			\end{equation}
and 
\begin{equation}\label{estim3sin}
				\begin{array}{rcl}
					v(x,t) & \leq & (M_l-m_l) - \tfrac{M_j-m_j}{2}\\
					& = & \left[C_0^{-\beta  l}  - \tfrac{C_0^{-\beta  j}}{2}\right]L\\
					& \leq & \left[\left(\frac{|x|}{C_0R} \frac{(\eta L)^{\frac{2-p}{sp}}}{\delta}\right)^{\frac{\beta}{\Gamma}} - \tfrac{C_0^{-\beta  j}}{2}\right]L
				\end{array}
		\end{equation}
		From \cref{estim2sin} and \cref{estim3sin}, we get
		\begin{align*}
			\left(\tfrac{C_0^{-\beta  j}L}{2}\pm v(x,t)\right)_-^{p-1}\leq  \left[\left(\frac{|x|}{C_0R} \frac{(\eta L)^{\frac{2-p}{sp}}}{\delta}\right)^{\frac{\beta}{\Gamma}} - C_0^{-\beta  j}\right]^{p-1} L^{p-1}
		\end{align*}
		We use this and the definition of $d_jR_j$ from \cref{defofLsin} to estimate 
		\begin{equation}\label{estim4sin}
			\begin{array}{rcl}
				J&\leq &C_0^{-\beta  j(p-1)}L^{p-1}\esssup_{I_j}\int_{\RR^N\setminus B_{j}}\frac{\left(\left(\frac{|x|}{d_lR_j}\right)^{\frac{\beta}{\Gamma}}-1\right)^{p-1}}{|x|^{N+sp}}\,dx\\
				&=&\frac{C_0^{-\beta j(p-1)}L^{p-1}}{(d_jR_j)^{sp}} \int_{\RR^N\setminus B_{1}}\frac{\left(|y|^{\frac{\beta}{\Gamma}}-1\right)^{p-1}}{|y|^{N+sp}}\,dy\\
				&=& \mathbf{C}_{1}\frac{C_0^{-\beta j(p-1)}L^{p-1}}{(d_jR_j)^{sp}} \int_{1}^{\infty}\frac{\left(\rho^{\frac{\beta}{\Gamma}}-1\right)^{p-1}}{{\rho}^{1+sp}}\,dy\rho\\
			\end{array}
		\end{equation}
		Substituting \cref{estim4sin} into \cref{tailest1sin}, we obtain
		\begin{multline}\label{taildecaysin}
			\tail\left(\left(\tfrac{C_0^{-\beta j}L}{2}\pm v\right)_{-};B_j,0,I_j\right)^{p-1}\\
			\begin{array}{rcl}
				&\leq & (d_jR_j)^{sp}\,\left[\mathbf{C}_{1}\frac{C_0^{-\beta j(p-1)}L^{p-1}}{(d_jR_j)^{sp}} \int_{1}^{\infty}\frac{\left(\rho^{\frac{\beta}{\Gamma}}-1\right)^{p-1}}{{\rho}^{1+sp}}\,d\rho +2^{p-1}\frac{L^{p-1}}{(C_0\,R)^{sp(1-\ve_0)}}+\mathbf{C}_{3}\frac{L^{p-1}}{(C_0\,R)^{sp(1-\ve_0)}}\right]\\
				&\leq & \mathbf{C}_{1}C_0^{-\beta j(p-1)}L^{p-1} \int_{1}^{\infty}\frac{\left(\rho^{\frac{\beta}{\Gamma}}-1\right)^{p-1}}{{\rho}^{1+sp}}\,d\rho+(2^{p-1}+\mathbf{C}_{3})(d_jR_j)^{sp}\frac{L^{p-1}}{(C_0\,R)^{sp(1-\ve_0)}}\\
				&\overset{\redlabel{7.10asin}{a}}{\leq}& \frac{C_0^{-\beta j(p-1)}L^{p-1} \eta^{p-1}}{2^{p-1}},
			\end{array}
		\end{multline} where \redref{7.10asin}{a} follows from \cref{choiceofalphasin},\cref{choiceofj0sin},\cref{defofLsin} and the fact that $j\geq j_0$, $L \geq 1$ and $R,\delta \leq 1$. Indeed, from \cref{choiceofalphasin} we have
  \[
  \mathbf{C}_{1}C_0^{-\beta j(p-1)}L^{p-1} \int_{1}^{\infty}\frac{\left(\rho^{\frac{\beta}{\Gamma}}-1\right)^{p-1}}{{\rho}^{1+sp}}\,d\rho \leq \frac{C_0^{-\beta j(p-1)}L^{p-1} \eta^{p-1}}{2^{p}},
  \]
  and 
\[
(2^{p-1}+\mathbf{C}_{3})(d_jR_j)^{sp}\frac{L^{p-1}}{(C_0\,R)^{sp(1-\ve_0)}} \leq (2^{p-1}+\mathbf{C}_{3})\eta^{p-2}C_0^{\ve_0sp}C_0^{j(\beta-sp)}L^{p-1}C_0^{-\beta j(p-1)}
\]
where we used the definition of $d_jR_j$ from \cref{defofLsin}, $\delta^{sp} \leq 1$, $R^{\ve_0} \leq 1$ and $L^{p-2} \leq 1$ as $L \geq 1$ and $p<2$. Using \cref{choiceofj0sin} and that $j \geq j_0$ we get
\[
C_0^{j(\beta-sp)} \leq \left(\frac{2^pC_0^{\ve_0sp}(\mathbf{C}_{3}+2^{p-1})}{\eta}\right)^{-1}
\]
so that 
\[
(2^{p-1}+\mathbf{C}_{3})(d_jR_j)^{sp}\frac{L^{p-1}}{(C_0\,R)^{sp(1-\ve_0)}} \leq \frac{C_0^{-\beta j(p-1)}L^{p-1} \eta^{p-1}}{2^{p}},
\]  
		
		\descitem{Step 5}{step225sin}
		Now, one of the following two alternatives must hold:
		\begin{align}
			\left|\{v(\cdot,s_j)\geq 0\}\cap \tfrac{1}{2}B_{j+1}\right|\geq \frac 12 \left|\tfrac{1}{2}B_{j+1}\right|, \label{alt1sin}\\
			\left|\{v(\cdot,s_j)\geq 0\}\cap \tfrac{1}{2}B_{j+1}\right|< \frac 12 \left|\tfrac{1}{2}B_{j+1}\right|,\label{alt2sin}
		\end{align}
		where we have set
		\begin{align*}
			s_j:=
			-\left(\frac{C_0^{-\beta j }L}{2}\right)^{2-p}\left(\frac{d_{j+1}R_{j+1}}{2}\right)^{sp}\delta.
		\end{align*}

		Without loss of generality, we assume that \cref{alt1sin} holds, noting that if \cref{alt2sin} holds, analogous conclusion follows with $v$ replaced by $-v$. Before proceeding, let us ensure that the time level $s_j$ is in the interval $(-R_j^{sp},0)$ so that we can apply \cref{prop:sinexp} in the next step. Indeed, we want
  \[
  -R_j^{sp} \leq -s_j
  \]
  which is equivalent to 
  \[
  R_j^{sp} \geq \left(\frac{C_0^{-\beta j }L}{2}\right)^{2-p}\left(\frac{d_{j+1}R_{j+1}}{2}\right)^{sp}\delta
  \]
  Plugging in the definition of $d_j$ from \cref{defofLsin} we get
  \[
 C_0^{sp+\beta(p-2)} \geq \frac{\delta^{1+sp}}{\eta^{2-p}2^{2-p}2^{sp}}.
  \]
  Since $sp+\beta(p-2) > 0$ we can enforce such a universal choice of $C_0 > 1$. 
		\descitem{Step 6}{step226sin}
		Now denote $w=\tfrac{C_0^{-\beta j}L}{2}+v$, then \cref{alt1sin} becomes
		\begin{align*}
			\left|\left\{w(\cdot,s_j)\geq \tfrac{C_0^{-\beta j}L}{2}\right\}\cap \tfrac{1}{2}B_{j+1}\right|\geq \frac 12 \left|\tfrac{1}{2}B_{j+1}\right|.
		\end{align*}
		Moreover from the induction hypothesis, we see that $w\geq 0$ in $Q_{R_j}(d_j)$  because
		\begin{align*}
			w = v + \tfrac{C_0^{-\beta j}L}{2}= u - \tfrac{M_j+m_j}{2} + \tfrac{C_0^{-\beta j}L}{2}\overset{\cref{defofLsin}}{=} u - \tfrac{M_j+m_j}{2} + \tfrac{M_j-m_j}{2}= u - m_j\geq 0.
		\end{align*}
		We see that since \cref{taildecaysin} holds, the Tail alternative in \cref{prop:sinexp} is satisfied. Therefore, we get
		\begin{align}\label{redofoscsin}
			w\geq  \frac{\eta C_0^{-\beta j}L}{2} \qquad \mbox{ a.e. }(x,t)\in Q_{R_{j+1}}(d_{j+1}),
		\end{align}
		which, by computations similar to  the previous step holds provided:
  \[
  C_0^{-\beta(p-2)} \geq 2^{(2-p)+sp}\frac{\eta^{2-p}}{\delta^{1+sp}\ve},
  \]
  where $\ve \in (0,1)$ is the as obtained in \cref{prop:sinexp} for $\al = 1/2$. Since $p<2$ we can further enforce such a universal choice of $C_0>1$. 

		We calculate
		\begin{equation*}
			\begin{array}{rcl}
				u(x,t)&=&\tfrac{M_j+m_j}{2} + v=\tfrac{M_j+m_j}{2} + w - \tfrac{C_0^{-\beta j}L}{2}\\
				&\overset{\cref{redofoscsin}}{\geq}& \tfrac{M_j+m_j}{2} - \tfrac{C_0^{-\beta j}L}{2}(1-\eta)\\
				&=& M_j - \tfrac{M_j-m_j}{2} - \tfrac{C_0^{-\beta j}L}{2}(1-\eta)\\
				&\overset{\redlabel{indsin}{a}}{=}& M_j - \tfrac{C_0^{-\beta j}L}{2}(2-\eta)\\
				&\overset{\cref{choiceofalphasin}}{\geq}& M_j - C_0^{-\beta(j+1)}L,
			\end{array}
		\end{equation*} where \redref{indsin}{a} follows from the induction hypothesis.
		Thus if we define $m_{j+1}:=M_j - C_0^{-\beta(j+1)}L$ and $M_{j+1}:=M_j$ then the inductive process in \descref{step22sin}{Step 3} is completed. 
	\end{description} This completes the proof of the claim.
\end{proof}

A consequence of \cref{claimfinalsin} is the following oscillation decay:
\begin{equation}\label{oscdecayssin}
	\essosc_{Q_{R_i}(d_i)}\, u:=\esssup_{Q_{R_i}(d_i)}\,u - \essinf_{Q_{R_i}(d_i)}\, u \leq C_0^{-\beta i}L.
\end{equation}

\begin{proof}[Proof of \cref{holderparabolicsin}]
	
	With notation as in \cref{claimfinalsin}, let us take $Q_0:=B_{C_0R}\times (-(C_0R)^{2s},0)$ and define $L$, $R_i$ and $d_i$ as \cref{defofLsin}. 
	
	Consider any two points 
	Let $(x_1,t_1) ,(x_2,t_2)\in  Q_1(d_1)$ such that $x_1\neq x_2$ and $t_1\neq t_2$, then there exist non-negative integers $n$ and $m$ such that
	\begin{equation}\label{holdest1sin}
		d_{n+1}R_{n+1}<|x_1-x_2|\leq d_nR_n, \qquad \text{and} \qquad R_{m+1}^{sp}<|t_1-t_2|\leq R_{m}^{sp}.
	\end{equation}
	%
	As a result, with $k := \max\{n,m\}$, we obtain
	\begin{align}\label{holdest0sin}
		|u(x_1,t_1)-u(x_2,t_2)|\leq \essosc_{Q_{r_k}(d_k)}u \overset{\cref{{oscdecayssin}}}{\leq} \max\{C_0^{-\beta n},C_0^{-\beta m}\}L.
	\end{align}
	
	From the first inequality in \cref{holdest1sin}, we deduce
	\begin{equation}\label{holdest2.5sin}
		\mathbf{C}_4L^{\frac{2-p}{sp}}\frac{|x_1-x_2|}{R}>C_0^{-n}
	\end{equation} 
 where 
 \begin{equation}\label{eq:C4sin}
     \mathbf{C}_4 = \frac{\eta^{\frac{2-p}{sp}}}{\delta}
 \end{equation}
	On the other hand, from the second inequality in \cref{holdest1sin}, we get
	\begin{equation}\label{holdest2.5tsin}
		\frac{|t_1-t_2|^{\frac{1}{sp}}}{R}> C_0^{-m}
	\end{equation}  
	
	%
	%
	The proof of the theorem follows from substituting  \cref{holdest2.5sin} and  \cref{holdest2.5tsin} into \cref{holdest0sin}.
\end{proof}

\begin{remark}
	We note that the proof provides the following relationship between the Holder exponents for space and time:
	\begin{center}
		\begin{tabular}{||c| c| c||} 
			\hline\hline
			& x & t \\ 
			\hline\hline 
			Degenerate & $\beta$ & $\tfrac{\beta}{sp}$\\
			\hline 
			Singular & $\beta$ & $\tfrac{\beta}{sp}$\\  
			\hline\hline
		\end{tabular}
	\end{center}
	In the degenerate case, the condition $\beta(p-1) < sp$ forces $\beta < sp$. 
\end{remark}

\begin{remark}\label{rem:nonlocalB}
	We further note that the space Holder exponent is $\beta$ while the time exponent is $\beta/sp$. In particular, if we start with $sp<1$ time regularity beats space regularity - such a thing does not occur in the local case and is an instance of a purely nonlocal phenomena. 
\end{remark}

\section*{Acknowledgements}

The authors thank Agnid Banerjee and Prashanta Garain for  helpful discussions.


\section*{References}

\end{document}